\newtheorem{theorem}{Theorem}[section]
\newtheorem{lemma}{Lemma}[section]
\newtheorem{definition}{Definition}[section]
\newtheorem{cor}{Corollary}[section]
\newtheorem{eg}{Example}[section]
\numberwithin{equation}{section}
\newcommand{\Names}{Partha Rana, Sriparna Bandopadhyay}
\newcommand{\Title}{Irreducible Combinatorially Symmetric Sign Patterns Requiring a Unique Inertia}
\def\det{{\rm det}}
\def\sgn{{\rm sgn}}
\def\In{{\rm In}}
\def\Ch{{\rm Ch}}
\def\min{{\rm min}}
\def\max{{\rm max}}
\def\RIn{{\rm RIn}}
\def\dist{{\rm dist}}
\begin{document}	
	%Insert the title of the paper
	\title{\Title\thanks{%Received by the editors on \DoS. Accepted for publication on \DoA. Handling Editor: \HE. 
			Corresponding Author: Partha Rana.}
		\author{Partha\ Rana\thanks{Department of Mathematics, Indian Institute of Technology Guwahati, Guwahati,
				Assam 781039, India.\\
				E-mail addresses: r.partha@iitg.ac.in (P. Rana), sriparna@iitg.ac.in (S. Bandopadhyay).}
			\and Sriparna Bandopadhyay}
	}

	\markboth{\Names}{\Title}

	\date{}
	
	\maketitle

	\begin{abstract}
		A sign pattern is a matrix whose entries belong to the set $\{+,-,0\}$. A sign pattern requires a unique inertia if every real matrix in its qualitative class has the same inertia. Symmetric tree sign patterns requiring a unique inertia has been studied extensively in \cite{2001, 2001a, 2018}. Necessary and sufficient conditions in terms of the symmetric minimal and maximal rank, as well as conditions depending on the position and sign of the loops in the underlying graph of such patterns has been used to characterize inertia of symmetric tree sign patterns. In this paper, we consider combinatorially symmetric sign patterns with a $0$-diagonal and identify some such patterns with interesting combinatorial properties, which does not require a unique inertia. Initially, we begin with combinatorially symmetric tree sign patterns with a $0$-diagonal, with a special focus on tridiagonal sign patterns. We then consider patterns whose underlying undirected graph contain cycles but no loops, and we derive necessary conditions based on the sign of the edges and the distance between the cycles in the underlying graph for such patterns to require a unique inertia.
	\end{abstract}
	
	\noindent {\bf Key words:} Sign pattern, Inertia, Tridiagonal sign pattern.
	
	\noindent {\bf AMS Subject Classification:} 05C50, 15A18, 15B35.
	\maketitle

	% Sample article for the Electronic Journal of Linear Algebra

	%%%%%%%%%%%%%%%%%%%%%%%%%%%%%%%%%%%%%%%

	%%%%%%%%%%%%%%%%%%%%%%%%%%%%%%%%%%%%%%%%%%%%%%%%%%%%%%%%%%%%%
	
	\section{Introduction}
	
	An $n\times n$ matrix $\mathcal{P}=[p_{ij}]$ whose entries belong to the set $\{+,-,0\}$ is called a \textit{sign pattern matrix} or a \textit{sign pattern}. The set of all $n\times n$ sign patterns is denoted by $\mathcal{Q}_n$ and the set of all real matrices $$\mathcal{Q}(\mathcal{P})=\{\mathbf{A}=[a_{ij}]\in\mathbb{R}^{n\times n}: \sgn(a_{ij})=p_{ij}~ \text{for} ~\text{all}~ i,j=1,2,...,n\}$$ is called the \textit{qualitative class} of the sign pattern $\mathcal{P}\in \mathcal{Q}_n$.
	
	$(+)+(-)$ is called an \textit{ambiguous entry} and is denoted by \#. Let $\mathcal{P}_1=[p^1_{ij}]$, $\mathcal{P}_2=[p^2_{ij}]$ be two sign patterns of order $n$. Then $\mathcal{P}_1+\mathcal{P}_2$ is defined unambiguously if $p^1_{ij}p^2_{ij}\neq -$ for all $i,j$ and the product $\mathcal{P}_1 \mathcal{P}_2$ is defined unambiguously if for all $i,j$; $\sum_{k=1}^{n}{p^1_{ik}p^2_{kj}}$ does not contain oppositely signed terms.
	
	A sign pattern $\mathcal{P}\in \mathcal{Q}_n$ is said to be \textit{sign nonsingular} if every $A\in \mathcal{Q}(\mathcal{P})$ is nonsingular. Thus, $\mathcal{P}$ is sign nonsingular if and only if $\det(\mathcal{P})=+$ or $\det(\mathcal{P})=-$, that is, in the standard expansion of $\det(\mathcal{P})$ into $n!$ terms, there is at least one nonzero term, and all nonzero terms have the same sign.
	If all matrices $A\in \mathcal{Q(\mathcal{P})}$ are singular, or equivalently, if $\det(\mathcal{P})=0$, that is, all the terms in the expansion of $\det(\mathcal{P})$ are equal to zero, then $\mathcal{P}$ is said to be \textit{sign singular}.

	The \textit{signed directed graph} $D$ of an $n\times n$ sign pattern matrix $\mathcal{P}=[p_{ij}]$, is the directed graph having n vertices $\{1,2,...,n\}$, such that there is a directed edge in $D$ from $i$ to $j$, denoted by $(i,j)$, if and only if $p_{ij}\neq 0$. The arc $(i,j)$ is associated with the sign $+~\text{or}~-$ if and only if $p_{ij}=+~\text{or}~-$. 
	A product of the form $\gamma=p_{i_1i_2}p_{i_2i_3}\cdots p_{i_ki_{k+1}}$, where all elements are nonzero and $\{i_1,i_2,...,i_k, i_{k+1}\}$ consists of distinct indices, is called a \textit{path} of length $l(\gamma)=k$ from $i_1$ to $i_{k+1}$.
	If $i_1 = i_{k+1}$ and $\{i_1, i_2, \dots, i_k\}$ consists of distinct indices, then $\gamma$ is called a \textit{simple cycle}  of length $l(\gamma)=k$, where $\gamma$ is denoted by either $(i_1i_2\cdots i_k)$ or $p_{i_1i_2}p_{i_2i_3}\cdots p_{i_ki_{1}}$. For $k=1$, the simple cycle $\gamma=p_{i_1i_1}$ is called a \textit{loop}.
	A simple cycle $\gamma$ is said to be positive (respectively, negative) if $\sgn(\gamma)=(-)^{k-1}p_{i_1i_2}p_{i_2i_3}\cdots p_{i_ki_{1}}$ is positive (respectively, negative). 
	Suppose that $\gamma_i$ are mutually vertex-disjoint simple cycles with 
	$l(\gamma_i) = k_i$ for $i = 1, 2, \dots, t$ then the product of simple cycles, $\Gamma=\gamma_1 \gamma_2\cdots \gamma_t$ is called a \textit{composite cycle} of length $l(\Gamma)=\sum_{i=1}^{t}k_i$ and $\Gamma$ is said to be positive (respectively, negative) if $\sgn(\Gamma)=\prod_{i=1}^t\sgn(\gamma_i)$ is positive (respectively, negative). 
	Suppose that $\Gamma_1 = \gamma_1\gamma_2\cdots\gamma_t$ is a composite cycle in $D$ and let $\Gamma_2$ be another composite cycle in $D$. We define
	$$
	\Gamma_1 \setminus \Gamma_2 = \prod\{\gamma_i : \gamma_i \in \Gamma_1 ~\text{and } \gamma_i \notin \Gamma_2\},$$
	where \(\gamma_i \text{ in } \Gamma_1\) is a simple cycle in $D$.
	If $S$ is a set of vertices in $D$, then $D \setminus S$ denotes the subgraph of $ D $ obtained by deleting the vertices in $S$ along with their incident edges.

	Let $\mathcal{P}\in \mathcal{Q}_n$ be a \textit{combinatorially symmetric} sign pattern, that is, $p_{ij}\neq 0$ if and only if $p_{ji}\neq 0$.
	The \textit{signed undirected graph} of a combinatorially symmetric sign pattern $\mathcal{P}$ is the undirected graph $G$, whose edges are signed, so that any edge $\{i,j\}$ is $+$ (respectively, $-$) if $p_{ij}p_{ji} = +$
	(respectively, $p_{ij}p_{ji}=-$). 
	A sequence of edges $P=\{i_1, i_2\} \{i_2, i_3\} \cdots \{i_k, i_{k+1}\}$ in $G$, where the vertices $i_1, i_2, \dots, i_{k+1}$ are all distinct, is called a \textit{path} of length $l(P)=k$.
	For $k \geq 3$, if $i_1 = i_{k+1}$ and the vertices $i_1, i_2, \dots, i_k$ are all distinct, then the closed path of the form $\{i_1, i_2\} \{i_2, i_3\} \cdots \{i_k, i_1\}$ is called a \textit{simple cycle}, denoted by $\mathcal{C} = i_1i_2\cdots i_k$, with length $l(\mathcal{C}) = k$.
	A product of mutually vertex-disjoint simple cycles in $G$, denoted by $\mathcal{C} = \mathcal{C}_1 \mathcal{C}_2 \cdots \mathcal{C}_t$, is called a \textit{composite cycle} of length $l(\mathcal{C})=\sum_{i=1}^{t} k_i$, where $k_i$ is the length of the $i$-th simple cycle. A set of edges $\mathcal{M} = \{\{i_1, j_1\}, \{i_2, j_2\}, \dots, \{i_k, j_k\}\}$ in $G$ is called a \textit{matching} if no two edges in $\mathcal{M}$ are adjacent. The \textit{length} of $\mathcal{M}$, denoted by $l(\mathcal{M})$, is defined as the number of edges in $\mathcal{M}$.
	For a vertex $i$ of $G$, the \textit{degree} of $i$, denoted by $\deg(i)$, is the number of edges of $G$ incident to $i$.
	If $S$ is a set of vertices in $G$, then $G \setminus S$ denotes the subgraph of $ G $ obtained by deleting the vertices in $S$ along with their incident edges.
	All the patterns throughout this paper are assumed to be irreducible combinatorially symmetric with a $0$-diagonal.
	If the underlying signed undirected graph $G$ is a tree (respectively, path), then $\mathcal{P}$ is referred to as a \textit{tree sign pattern} (respectively, \textit{path sign pattern}). A path sign pattern is also referred to as a \textit{tridiagonal sign pattern}.
	
	% The matching $\mathcal{M}$ is called a \textit{principal matching} if $\{i_1, i_2, \dots, i_k\} = \{j_1, j_2, \dots, j_k\}$.

	%In an $n \times n$ sign pattern $\mathcal{P} = [p_{ij}]$, a \textit{matching} of size $k$ corresponds to $k$ nonzero elements in the matrix among whose collective initial indices there are no repetitions, and among whose collective terminal indices there are no repetitions. If the set of initial indices and the set of terminal indices are the same, then the matching is called a \textit{principal} matching. The product of entries in a principal matching of size $k$ is either a $k$-cycle or a product of cycles whose total length is $k$ and whose index sets are mutually disjoint.
	%A \textit{complete} matching is a principal matching which corresponds to a nonzero term in the determinant of the sign pattern. 
	%The product of entries in a principal matching of size $k$ is either a $k$-cycle or a product of cycles whose total length is $k$ and whose index sets are mutually disjoint. Thus, a principal matching of length $k$  corresponds to a $\pm$ term in the determinant of a principal minor of $\mathcal{P}$ of size $k$ based on the indices appearing in the matching. In particular, a complete matching is a $\pm$ term in the determinant of $\mathcal{P}$.
	
	Suppose that $\mathcal{P} $ is an irreducible sign pattern whose underlying signed undirected graph is $G$, with the vertex set denoted by $V(G)$.
	The distance between two vertices $u,v\in V(G)$, denoted by $\dist(u,v)$, is defined as the minimum length among all directed paths from $u$ to $v$. Furthermore, suppose that $\mathcal{C}=u_1u_2\cdots u_k$ is a cycle in $G$. Then, the distance from a vertex $u\in V(G)$ to the cycle $\mathcal{C}$ is defined by $\dist(u,\mathcal{C})=\min\{\dist(u,u_i): i=1,2,...,k\}.$

	%Throughout this paper, we assume all the cycles to be simple unless otherwise mentioned.
	
	Let $\mathcal{P} \in \mathcal{Q}_n$ be an irreducible combinatorially symmetric sign pattern with a $0$-diagonal whose underlying signed undirected graph is $G$. We say that the path $P$ is a \textit{maximal signed positive path} in $G$  if $P$ satisfies the following: 
	\begin{itemize}
		\item[i.] the path starts with the first positive edge, 
		or with the first positive edge that follows a negative edge
		\item[ii.] contains successive 
		positive edges and
		\item[iii.] ends at the last positive edge, or when a negative edge occurs. 
	\end{itemize}
	Similarly, we define a \textit{maximal signed negative path}.  
	For example, let $\mathcal{P}=[p_{ij}]$ be an irreducible combinatorially symmetric sign pattern whose underlying signed undirected graph $G$ is given in Fig. \ref{nfig11}. 
	
	\begin{figure}[H] \label{nfig11}
		\tikzset{node distance=1cm}
		\centering
		\begin{tikzpicture}
			\tikzstyle{vertex}=[draw, circle, inner sep=0pt, minimum size=.15cm, fill=black]
			\tikzstyle{edge}=[,thick]
			\node[vertex,label=right:1](v1)at(2,0){};
			\node[vertex,label=above:2](v2)at(1,1.73){};
			\node[vertex,label=above:3](v3)at(-1,1.73){};
			\node[vertex,label=left:4](v4)at(-2,0){};
			\node[vertex,label=below:5](v5)at(-1,-1.73){};
			\node[vertex,label=below:6](v6)at(1,-1.73){};
			%\node[vertex,label=center:7](v7)at(-2,1){};
			%\node[vertex,label=center:8](v8)at(-1,2){};
			%\node[vertex,label=center:1](v9)at(12,0){};
			\draw[edge](v1)--node[right, yshift=0cm, xshift=-0cm]{$-$}(v2);
			\draw[edge](v2)--node[above]{$- $}(v3);
			\draw[edge](v3)--node[left, yshift=-0cm, xshift=-0.1cm]{$-$}(v4);
			\draw[edge](v4)--node[left, yshift=-0.1cm, xshift=-0.1cm]{$+$}(v5);
			\draw[edge](v5)--node[below, yshift=-0cm, xshift=0cm]{$+$}(v6);
			\draw[edge](v6)--node[right, yshift=-0.1cm, xshift=0cm]{$+$}(v1);
		\end{tikzpicture} \caption{G}\end{figure}
	Then the path $P_1=\{1,2\}\{2,3\}\{4,3\}$ is a maximal signed negative path and $P_2=\{4,5\}\{5,6\}\{6,1\}$ is a maximal signed positive path.
	
	Suppose $P$ is a matrix property that a real matrix may or may not have. Then a sign pattern $\mathcal{P}$ is said to \textit{require} $P$ if every real matrix in $\mathcal{Q}(\mathcal{P})$ satisfies the property $P$, or to \textit{allow} $P$ if some real matrix in $\mathcal{Q}(\mathcal{P})$ satisfies $P$.

	Let $A$ be an $n\times n$ real matrix, the \textit{inertia} of $A$,  denoted by $\In(A)$, is the triple of nonnegative numbers $\In(A)=(i_+(A), i_-(A),i_0(A))$, where $i_+(A)$, $i_-(A)$ and $i_0(A)$ denote the number of eigenvalues of $A$ with positive, negative and zero real parts, respectively. Furthermore, $\RIn(A) = (i_+(A), i_-(A), i_z(A), 2i_{p}(A))$ is the \textit{refined inertia} of $A$, where $i_z(A)$, $2i_{p}(A)$ represent the number of zero eigenvalues, nonzero purely imaginary eigenvalues of A, respectively. The \textit{eigenvalue frequency} of an $n\times n$ real matrix $A$ is defined as the ordered pair $S_A=(k,n-k)$, where $k$ is the number of real eigenvalues of $A$ and $n-k$ is the number of nonreal eigenvalues of $A$.

	For a sign pattern $\mathcal{P} \in \mathcal{Q}_n$, the inertia of $\mathcal{P}$ is defined as $\In(\mathcal{P})=\{\In(A): A\in \mathcal{Q}(\mathcal{P})\}$ and the refined
	inertia of $\mathcal{P}$ is defined as $\RIn(\mathcal{P})=\{\RIn(A): A\in \mathcal{Q}(\mathcal{P})\}$. The sign pattern $\mathcal{P}$ is called $k$-consistent, for some fixed nonnegative integer $k$ with $0\leq k \leq n$, if $S_A=(k,n-k)$ for all $A\in \mathcal{Q}(\mathcal{P})$. If $\mathcal{P}$ is $k$-consistent, then we write $S_{\mathcal{P}}=(k,n-k)$. Additionally, $\mathcal{P}$ is said to be consistent if it is $k$-consistent for some integer $k$, where $0\leq k\leq n$.
	
	Let $A$ be a square matrix of order $n$. If $\alpha$, $\beta$ are subsets of $\{1,2,...,n\}$, then $A[\alpha,\beta]$ denotes the submatrix of $A$ with rows, columns corresponding to the indices in $\alpha, \beta$, respectively. In particular, $A[\alpha]$ denotes $A[\alpha, \alpha]$.

	%In \cite{1988} Eschenbach et al. proposed the problem of characterizing $k$-consistent sign patterns for $0<k<n$. In \cite{1991}, the authors characterized all $n\times n$ consistent sign patterns $\mathcal{P}$ for which $S_{\mathcal{P}}=(0,n)$ or $(n,0)$. For sign pattern matrices of odd order, the following results are by Eschenbach \cite{1993a}.
	
	If $\mathcal{P}_1$, $\mathcal{P}_2$ are two sign patterns, then the patterns are said to be equivalent if one can be obtained from
	the other through a series of permutation similarity, signature similarity, negation, and transposition. Being
	equivalent, $\mathcal{P}_1$ requires a unique inertia if and only if $\mathcal{P}_2$ requires a unique inertia.
	
	Let $\mathcal{P}_1, \mathcal{P}_2 \in \mathcal{Q}_n$ be two sign patterns. We say that $\mathcal{P}_2$ is a \textit{superpattern} of $\mathcal{P}_1$ if $\mathcal{P}_2$ can be obtained from $\mathcal{P}_1$ by replacing some (possibly none) of the zero entries of $\mathcal{P}_1$ with either $+$ or $-$. In this case, $\mathcal{P}_1$ is called a \textit{subpattern} of $\mathcal{P}_2$.

	Suppose $\mathcal{P}=(p_{ij})\in \mathcal{Q}_n$ is an irreducible sign pattern whose underlying signed directed graph $D$ contains a simple $k$-cycle $\gamma$.  Define the real matrix $B_\gamma(0)=(b_\gamma(0)_{ij})$ by
	\begin{equation} \label{e1}
		b_{\gamma}(0)_{ij}= \begin{cases}
			1 & \text{if}\ p_{ij}=+ ~\text{and}~ \text{is}~ \text{in}~ \gamma \\
			-1 & \text{if}\ p_{ij}=- ~\text{and}~ \text{is}~ \text{in}~ \gamma \\
			0 & \text{elsewhere,}
		\end{cases}
	\end{equation}
	and define the perturbed matrix $B_\gamma(\epsilon)=(b_\gamma(\epsilon)_{ij})$ by
	\begin{equation} \label{e2}
		b_{\gamma}(\epsilon)_{ij}= \begin{cases}
			b_\gamma(0)_{ij} & \text{if}\ p_{ij}~ \text{is}~ \text{in}~ \gamma \\
			\epsilon & \text{if}\ p_{ij}=+ ~\text{and}~ \text{is}~\text{not}~ \text{in}~ \gamma \\
			-\epsilon & \text{if}\ p_{ij}=- ~\text{and}~ \text{is}~ \text{not}~\text{in}~ \gamma \\
			0 & \text{elsewhere,}
		\end{cases}
	\end{equation}
	for some $\epsilon>0$. Since $B_\gamma(0)$ has $k$ nonzero algebraically simple eigenvalues which are the $k$-th complex roots of $+1$, $-1$ depending on $\sgn(\gamma)$, and the eigenvalues of a real matrix are continuous functions of the entries of the matrix, for $\epsilon>0$ sufficiently small, the perturbed matrix $B_\gamma(\epsilon)\in \mathcal{Q}(\mathcal{P})$ has $k$ algebraically simple nonzero eigenvalues close to the nonzero eigenvalues of $B_\gamma(0)$. If $\gamma$ is a negative even $k$-cycle, then $B_\gamma(0)$ has $k$ nonreal algebraically simple eigenvalues, thus, for $\epsilon>0$, sufficiently small the matrix $B_\gamma(\epsilon)$ has $k$ distinct nonreal eigenvalues. Similarly, if $\gamma$ is a positive even $k$-cycle then $B_\gamma(0)$ has algebraically simple real eigenvalues $1$ and $-1$, so for $\epsilon>0$, sufficiently small $B_\gamma(\epsilon)$ has two algebraically simple real eigenvalues close to $1$ and $-1$.
	Similarly if $\Gamma$ is a composite cycle in $D$, we can define $B_\Gamma(0)$ and $B_{\Gamma}(\epsilon)$ satisfying similar properties.

	%In \cite{2022}, Breen et al. characterized all $2\times 2$ and $3\times 3$ irreducible sign patterns $\mathcal{P}$ according to the value of $q(\mathcal{P})$. In \cite{2002}, the authors proved some necessary and sufficient conditions for an $n\times n$ sign pattern $\mathcal{P}$ for which $q(\mathcal{P})=n$.
	
	Assume $\mathcal{P}\in \mathcal{Q}_n$ is a sign pattern such that the maximal cycle length in the signed directed graph of $\mathcal{P}$ is $m$. Let $B\in \mathcal{Q}(\mathcal{P})$ then the characteristic polynomial of $B$ (denoted by $\Ch_B(x)$) is given by $$\Ch_B(x)=x^n-E_1(B)x^{n-1}+E_2(B)x^{n-2}-\cdots+(-1)^mE_m(B)x^{n-m},$$ where $E_k(B)$ for $1\leq k\leq m$, is the sum of all cycles (simple or composite) of length $k$ in $B$ properly signed.
	Let $V_+(x)$ be the variation of signs in the terms of $\Ch_B(x)$ for $x>0$, and let $V_-(x)$ be the variation of signs in the terms of $\Ch_B(x)$ for $x<0$. Then, by Descartes' rule of signs, the number of positive (negative) roots of $\Ch_B(x)$ is either equal to $V_+(x)$ ($V_-(x)$, respectively) or is less than it by an even number. For example, consider $\Ch_B(x)=a_3x^3+a_2x^2+a_1x+a_0$, for some real matrix $B$ and $a_i\in \mathbb{R}$, $i=0,1,2,3$. Then, 
	$$
	\sgn(\Ch_B(x)) = \begin{cases} 
		(\sgn(a_3))(+)+(\sgn(a_2))(+)+(\sgn(a_1))(+)+(\sgn(a_0)) & \text{if} ~ x> 0 \\
		(\sgn(a_3))(-)+(\sgn(a_2))(+)+(\sgn(a_1))(-)+(\sgn(a_0)) & \text{if} ~x<0,
	\end{cases}
	$$ where the notation $(\sgn(a_i))(\cdot)$ in the $i$-th position represents the product of $\sgn(a_i)$ and the sign of $x^i$ for $3\geq i\geq 0$, depending on whether $x>0$ or $x<0$.
	If \( V_+(x) = 1 \) and \( V_-(x) = 2 \) for some \( a_i \in \mathbb{R} \), \( i = 0, 1, 2, 3 \), then the number of positive roots of $\Ch_B(x)$ is exactly one, and the number of negative roots is either two or zero.

	Symmetric sign patterns requiring a unique inertia has been studied in \cite{2001, 2001a, 2018, 2024}. For instance, in \cite{2001}, the authors characterized symmetric sign patterns that require a unique inertia in terms of the symmetric minimal rank and symmetric maximal rank. Also, sufficient conditions for symmetric tree sign patterns to require a unique inertia based on the sign and position of the loops in the underlying graph were given in \cite{2018, 2024}. In \cite{2001a}, Hall and Li characterized all irreducible symmetric tridiagonal sign patterns requiring a unique inertia. Lin et al. \cite{2018} characterized all irreducible sign patterns of order 2 and 3 that require a unique inertia.

	In Section \ref{s2}, to begin with, we consider irreducible combinatorially symmetric tree sign patterns with a $0$-diagonal and obtain some necessary conditions for such patterns to require a unique inertia based on the multiplicity of eigenvalues with zero real part. Next, we shift our focus to a particular type of tree sign pattern, the tridiagonal sign patterns with a $0$-diagonal, and we obtain necessary conditions based on the number of maximal signed paths of odd length that are allowed by the sign pattern for requiring a unique inertia. We also identify certain patterns which cannot occur as subpatterns of tridiagonal sign patterns requiring a unique inertia. 
	In Section \ref{s4}, we investigate irreducible combinatorially symmetric sign patterns whose underlying graphs contain cycles but no loops. We derive necessary conditions for such sign patterns to require a unique inertia based on the number of negative edges and other combinatorial properties of the cycles in the underlying graphs.
	
	We conclude this paper with interesting results which follow from those obtained in this paper, but could not be included in this manuscript. In Section \ref{s5}, we also include problems which come as natural extensions of the results obtained in this paper and that pave the way for future research in this topic.

	\section{Tree and path sign patterns with a $0$-diagonal requiring a unique inertia} \label{s2}
	
	As mentioned earlier, all the patterns in this paper are assumed to be combinatorially symmetric.
	In this section, to begin with, we show that if a tree sign pattern with a $0$-diagonal has no repeated eigenvalues with zero real part, then it has a unique inertia. 
	We also establish that for tridiagonal sign patterns with a $0$-diagonal, if there are no repeated zero eigenvalues, then it requires a unique inertia.
	The relationship between tridiagonal patterns requiring a unique inertia with the consistency of certain associated patterns were useful in identifying certain combinatorial structures of such patterns requiring a unique inertia.

	\begin{lemma} \label{lem1}
		Suppose that $f(x)$ is a real polynomial of even degree $n$, consisting only of even powers of $x$. If $\lambda$ is a root of $f(x)$ with multiplicity $m$, then $-\lambda$ is also a root of $f(x)$ with multiplicity $m$.
	\end{lemma}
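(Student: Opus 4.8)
The plan is to extract the one structural fact that does all the work: since $f(x)$ consists only of even powers of $x$, it is an even function, that is, $f(-x) = f(x)$ holds identically. (Equivalently, one may write $f(x) = g(x^2)$ for a polynomial $g$ of degree $n/2$, though I would not need this form.) Given this symmetry, the cleanest route to the \emph{multiplicity} statement is through the derivative characterization of root order: $\lambda$ is a root of $f$ of multiplicity exactly $m$ if and only if $f(\lambda) = f'(\lambda) = \cdots = f^{(m-1)}(\lambda) = 0$ while $f^{(m)}(\lambda) \neq 0$.

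The key step is to differentiate the identity $f(x) = f(-x)$ repeatedly. Applying the chain rule $k$ times to the right-hand side yields $f^{(k)}(x) = (-1)^k f^{(k)}(-x)$ for every $k \geq 0$. Evaluating this at $x = \lambda$ gives the transport relation $f^{(k)}(-\lambda) = (-1)^k f^{(k)}(\lambda)$, which lets me read off the value of each derivative at $-\lambda$ directly from its value at $\lambda$.

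With this relation in hand the conclusion is immediate. Assuming $\lambda$ has multiplicity $m$, we have $f^{(k)}(\lambda) = 0$ for $0 \leq k \leq m-1$, so $f^{(k)}(-\lambda) = (-1)^k f^{(k)}(\lambda) = 0$ for the same range of $k$; and since $f^{(m)}(\lambda) \neq 0$, the relation gives $f^{(m)}(-\lambda) = (-1)^m f^{(m)}(\lambda) \neq 0$. Hence $-\lambda$ is a root of $f$ of multiplicity exactly $m$, as required.

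There is no serious obstacle here; the only point demanding a moment's care is the degenerate case $\lambda = 0$, where $-\lambda = \lambda$ and the statement is vacuous, but the derivative argument covers it automatically without separate treatment. As an alternative to the derivative approach, I could argue by factorization: writing $f(x) = (x-\lambda)^m h(x)$ with $h(\lambda) \neq 0$ and substituting $-x$ for $x$ in $f(x) = f(-x)$ produces $f(x) = (-1)^m (x+\lambda)^m h(-x)$, which exhibits $(x+\lambda)^m$ as an exact factor since $(-1)^m h(-x)$ does not vanish at $x = -\lambda$. Both routes are routine; I would present the derivative version for brevity.
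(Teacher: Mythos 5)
Your proof is correct, and your primary route is genuinely different from the paper's. The paper argues by factorization: it writes $f(x)=(x-\lambda)^m g(x)$ with $g(\lambda)\neq 0$, uses $f(x)=f(-x)=(-1)^m(x+\lambda)^m g(-x)$ to conclude that $(x+\lambda)^m$ divides $f$, and then rules out higher multiplicity of $-\lambda$ by a symmetry-and-contradiction step (if $-\lambda$ had multiplicity $>m$, so would $\lambda$). Your derivative argument replaces all of that with the single transport relation $f^{(k)}(-\lambda)=(-1)^k f^{(k)}(\lambda)$ together with the standard characterization of multiplicity via vanishing derivatives, which delivers the \emph{exact} multiplicity in one stroke and, as you note, treats $\lambda=0$ with no special pleading. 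This is a small but real gain in cleanliness: the paper's intermediate display $f(x)=(x-\lambda)^m(x+\lambda)^m g^{(1)}(x)$ is literally false when $\lambda=0$ (it would assert multiplicity $2m$ at the origin), whereas both your derivative argument and your sketched factorization alternative --- reading off exact multiplicity directly from $f(x)=(-1)^m(x+\lambda)^m h(-x)$ with $h(\lambda)\neq 0$ --- avoid that snag. The only hypothesis your route quietly uses beyond the paper's is that the derivative test for multiplicity is valid, which holds here since we are in characteristic zero and the relation extends to complex roots because $f(x)=f(-x)$ is a polynomial identity.
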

	\begin{proof}
		Since $\lambda$ is a root of $f(x)$ with multiplicity $m$, so $f(x)=(x-\lambda)^mg(x)$ where $g(x)$ is a polynomial of degree $n-m$ with $g(\lambda)\neq 0$. Since $f(x)$ consists only of even powers of $x$, $$f(x)=f(-x)=(-1)^m(x+\lambda)^mg(-x).$$
		Thus $(x+\lambda)^m$ is also a factor of $f(x)$, so $f(x)=(x-\lambda)^m(x+\lambda)^mg^{(1)}(x)$. Therefore, $-\lambda$ is a root of $f(x)$ with multiplicity at least $m$. If $-\lambda$ is a root of $f(x)$ with multiplicity greater than $m$, then similarly $\lambda$ is also a root of $f(x)$ with multiplicity greater than $m$, which is a contradiction. Therefore, if $\lambda$ is a root of $f(x)$ with multiplicity $m$, then $-\lambda$ is also a root of $f(x)$ with multiplicity $m$.
	\end{proof}
	
	\begin{cor} \label{xlem1}
		Suppose that $f(x)$ is a real polynomial of odd degree $n$, consisting only of odd powers of $x$. If $\lambda$ is a root of $f(x)$ with multiplicity $m$, then $-\lambda$ is also a root of $f(x)$ with multiplicity $m$.
	\end{cor}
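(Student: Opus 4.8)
The plan is to reduce this odd-power statement to the even-power case already settled in Lemma~\ref{lem1}. The key structural observation is that a polynomial $f(x)$ of odd degree $n$ consisting only of odd powers of $x$ factors as $f(x) = x\,p(x)$, where $p(x)$ collects the coefficients of $f$ shifted down by one degree. Concretely, if $f(x) = a_1 x + a_3 x^3 + \cdots + a_n x^n$, then $p(x) = a_1 + a_3 x^2 + \cdots + a_n x^{n-1}$ is a polynomial of \emph{even} degree $n-1$ consisting only of \emph{even} powers of $x$. So the first step is simply to record this factorization and verify that $p$ meets the hypotheses of Lemma~\ref{lem1}.

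Next I would split into two cases according to whether $\lambda = 0$. The case $\lambda = 0$ is immediate, since then $-\lambda = \lambda$ and the conclusion holds trivially with the same multiplicity $m$. For the case $\lambda \neq 0$, I would use the fact that the linear factor $x$ does not vanish at $\lambda$ (nor at $-\lambda$). Hence, writing $f(x) = x\,p(x)$, the multiplicity of $\lambda$ as a root of $f$ coincides exactly with its multiplicity as a root of $p$: the factor $x$ contributes nothing to the order of vanishing away from the origin. Thus $\lambda$ is a root of $p$ of multiplicity $m$.

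Now I would apply Lemma~\ref{lem1} directly to $p(x)$, which is of even degree with only even powers. It yields that $-\lambda$ is a root of $p$ of multiplicity exactly $m$. Since $-\lambda \neq 0$, the factor $x$ again contributes nothing at $-\lambda$, so $-\lambda$ is a root of $f(x) = x\,p(x)$ of multiplicity exactly $m$, completing the argument.

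The proof is essentially bookkeeping, and the only point needing a little care — the ``main obstacle,'' though it is minor — is the bookkeeping on multiplicities across the factor of $x$: one must be explicit that because $x$ is nonzero at both $\pm\lambda$ when $\lambda \neq 0$, the multiplicities of $\pm\lambda$ in $f$ and in $p$ agree, so that the multiplicity $m$ transfers cleanly from $f$ to $p$ and back. As an alternative to invoking Lemma~\ref{lem1}, one could instead mimic its proof verbatim using the oddness relation $f(-x) = -f(x)$ in place of $f(-x) = f(x)$; writing $f(x) = (x-\lambda)^m g(x)$ with $g(\lambda)\neq 0$ gives $-f(x) = (-1)^m (x+\lambda)^m g(-x)$, so $(x+\lambda)^m \mid f(x)$, and the symmetric contradiction argument rules out strictly larger multiplicity. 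I would favor the reduction via the factorization, as it is shorter and makes the dependence on Lemma~\ref{lem1} transparent.
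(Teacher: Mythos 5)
Your proposal is correct and follows essentially the same route as the paper: factor $f(x)=x\,p(x)$ with $p$ of even degree containing only even powers, and invoke Lemma~\ref{lem1}. The paper states this reduction in one line; your added bookkeeping (the $\lambda=0$ case and the transfer of multiplicities across the factor $x$) fills in details the paper leaves implicit but does not change the argument.
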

	\begin{proof}
		Since $f(x)$ is a real polynomial of odd degree $n$, consisting only of odd powers of $x$, we have $f(x)=xg(x)$, where $g(x)$ is a real polynomial of even degree that consists only of even powers of $x$. Hence, by Lemma~\ref{lem1}, the result follows.
	\end{proof}

	Since for a tree sign pattern $\mathcal{P} \in \mathcal{Q}_n$ with a $0$-diagonal, the characteristic polynomial $\Ch_B(x)$ of $B \in \mathcal{Q}(\mathcal{P})$ consists only of even powers of $x$ when $n$ is even and only odd powers of $x$ when $n$ is odd, Lemma \ref{lem1} and its corollary give the following result.
	\begin{cor}\label{c1}
		Suppose that $\mathcal{P}\in \mathcal{Q}_n$ is a tree sign pattern with a $0$-diagonal and $B\in \mathcal{Q}(\mathcal{P})$. If $\lambda$ is an eigenvalue of $B$ with algebraic multiplicity $m$, then $-\lambda$ is also an eigenvalue of $B$ with algebraic multiplicity $m$.
	\end{cor}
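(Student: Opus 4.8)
The plan is to reduce the statement to Lemma~\ref{lem1} and Corollary~\ref{xlem1} by first establishing, in detail, the parity structure of the characteristic polynomial $\Ch_B(x)$ that is asserted in the sentence preceding the corollary. Since $\lambda$ is an eigenvalue of $B$ with algebraic multiplicity $m$ exactly when $\lambda$ is a root of $\Ch_B(x)$ with multiplicity $m$, it suffices to show that $\Ch_B(x)$ contains only even powers of $x$ when $n$ is even and only odd powers of $x$ when $n$ is odd, and then to invoke the appropriate one of Lemma~\ref{lem1} and Corollary~\ref{xlem1}.

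First I would write $\Ch_B(x)=\sum_{k}(-1)^k E_k(B)\,x^{n-k}$, where $E_k(B)$ is the signed sum of all simple and composite cycles of length $k$ in the signed directed graph of $B$, as in the formula of the introduction. The crux is the combinatorial observation that, for a tree sign pattern with a $0$-diagonal, every cycle has even length. Indeed, the $0$-diagonal rules out loops (length-$1$ cycles), and since the underlying undirected graph is a tree it contains no simple cycle of length $\geq 3$; hence the only simple cycles in the directed graph are the length-$2$ cycles $p_{ij}p_{ji}$ arising from the edges of the tree. Any composite cycle is a vertex-disjoint product of such length-$2$ cycles (equivalently, a matching of the tree), so its length is a sum of $2$'s and is therefore even. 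Consequently $E_k(B)=0$ for every odd $k$.

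It then follows that the only surviving terms of $\Ch_B(x)$ are those $x^{n-k}$ with $k$ even. When $n$ is even each surviving exponent $n-k$ is even, and when $n$ is odd each surviving exponent is odd, which is exactly the parity dichotomy required. Applying Lemma~\ref{lem1} in the even case, and Corollary~\ref{xlem1} in the odd case, to the polynomial $\Ch_B(x)$ yields that $-\lambda$ is a root of $\Ch_B(x)$ of the same multiplicity $m$, that is, an eigenvalue of $B$ of algebraic multiplicity $m$.

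I expect the only genuine content to be the even-length claim for cycles; everything after it is a direct substitution into the stated coefficient formula together with a parity count of the exponents $n-k$. No delicate estimates or obstacles arise here, since the continuity and perturbation machinery introduced earlier is not needed for this corollary.
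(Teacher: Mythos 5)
Your proposal is correct and follows exactly the route the paper takes: the paper states Corollary~\ref{c1} as an immediate consequence of the fact that $\Ch_B(x)$ has only even powers of $x$ for $n$ even and only odd powers for $n$ odd, combined with Lemma~\ref{lem1} and Corollary~\ref{xlem1}. You merely make explicit the justification the paper leaves implicit (all cycles in the directed graph of a tree pattern with $0$-diagonal are vertex-disjoint unions of $2$-cycles, so $E_k(B)=0$ for odd $k$), and that justification is sound.
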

	%	Let \textbf{$n_r(\mathcal{P})$} (respectively, $n_c(\mathcal{P})$) be the maximum number of real (respectively, non-real) eigenvalues allowed by $\mathcal{P}\in \mathcal{Q}_n$. Eschenbach et al. \cite{1}, proved that \textcolor{red}{(page-465)} $\mathcal{P}$ is consistent if and only if $n_r(\mathcal{P})+n_c(\mathcal{P})=n$.
	The following result is by Eschenbach et al. \cite{1}.
	\begin{lemma}[Lemma 1.5, \cite{1}] \label{l1}
		If an $n\times n$ sign pattern matrix $\mathcal{A}$ does not allow repeated real eigenvalues, then $\mathcal{A}$ is consistent.
	\end{lemma}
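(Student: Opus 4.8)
The plan is to argue by a connectedness-and-continuity argument on the qualitative class. The key observation is that $\mathcal{Q}(\mathcal{A})$ is a path-connected (indeed convex) subset of $\mathbb{R}^{n\times n}$: it is a product of open rays $(0,\infty)$ or $(-\infty,0)$ for the nonzero entries and singletons $\{0\}$ for the zero entries. On this connected set I would study the function $A \mapsto k(A)$, where $k(A)$ denotes the number of real eigenvalues of $A$ counted with multiplicity, and show it is constant; this is exactly the assertion that $\mathcal{A}$ is $k$-consistent, hence consistent. Note that by hypothesis no matrix in $\mathcal{Q}(\mathcal{A})$ has a repeated real eigenvalue, so at every point of the class the real eigenvalues are simple.

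First I would show that $k$ is locally constant at every $A \in \mathcal{Q}(\mathcal{A})$. Fix such an $A$. Its real eigenvalues are simple and isolated, while its nonreal eigenvalues (possibly repeated) lie off the real axis. Since the eigenvalues of a real matrix depend continuously on its entries (the coefficients of $\Ch_A(x)$ are polynomials in the entries, and the roots of a polynomial depend continuously on its coefficients), for any sufficiently small perturbation $A'\in\mathcal{Q}(\mathcal{A})$ the spectrum of $A'$ splits into clusters, one near each eigenvalue of $A$. Near each nonreal eigenvalue $\mu$ the corresponding eigenvalues of $A'$ stay within a small disk about $\mu$ and hence remain nonreal. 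Near each simple real eigenvalue $\lambda$ there is exactly one eigenvalue of $A'$; I would argue it stays real because $\Ch_{A'}(x)$ has real coefficients, so its nonreal roots occur in conjugate pairs, and a single isolated simple root near $\lambda$ cannot acquire a conjugate partner nearby. Summing the contributions over all clusters yields $k(A') = k(A)$ for $A'$ close to $A$.

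Having established local constancy, I would conclude by connectedness: a locally constant integer-valued function on the connected set $\mathcal{Q}(\mathcal{A})$ is globally constant. Hence there is a fixed $k$ with $S_A = (k,\, n-k)$ for every $A \in \mathcal{Q}(\mathcal{A})$, which is precisely the statement that $\mathcal{A}$ is consistent.

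The step I expect to be the main obstacle is the local-constancy argument, specifically ruling out that the count of real eigenvalues changes under small perturbations. The delicate point is that the number of real eigenvalues can only jump when two real eigenvalues collide and leave the real axis as a conjugate pair (or the reverse); at the instant of such a collision the matrix would possess a repeated real eigenvalue. Since the hypothesis forbids repeated real eigenvalues everywhere in $\mathcal{Q}(\mathcal{A})$, no such transition can occur, but making this rigorous requires the simplicity of the real eigenvalues together with the conjugate-pair structure of the roots of a real polynomial, as used above. The convexity (hence connectedness) of $\mathcal{Q}(\mathcal{A})$ is routine and is what allows the local conclusion to propagate globally.
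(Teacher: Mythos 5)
Your proof is correct. The paper itself gives no proof of this lemma (it is imported from \cite{1}), but your argument --- convexity of $\mathcal{Q}(\mathcal{A})$, continuity of eigenvalues, and the key observation that a single simple real eigenvalue cannot drift off the real axis under a small real perturbation because its complex-conjugate partner would have to appear in the same disk, forcing a repeated real eigenvalue at the transition --- is essentially the same homotopy/continuity argument that the paper deploys for its analogue, Theorem~\ref{l3.3ne}, and matches the standard proof of the cited result.
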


	The following is a similar sufficient condition for tree sign patterns with a $0$-diagonal to require a unique inertia.
	
	\begin{theorem} \label{l3.3ne}
		If $\mathcal{P}\in \mathcal{Q}_n$ is a tree sign pattern with a $0$-diagonal such that it does not allow repeated eigenvalues with zero real part, then $\mathcal{P}$ requires a unique inertia.
	\end{theorem}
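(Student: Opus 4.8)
The plan is to show that the inertia of any $B \in \mathcal{Q}(\mathcal{P})$ is completely determined by combinatorial data, under the hypothesis that no repeated eigenvalues with zero real part are allowed. The key structural fact, supplied by Corollary~\ref{c1}, is that the spectrum of every $B$ is symmetric about the origin: $\lambda$ and $-\lambda$ occur with equal multiplicity. This spectral symmetry immediately forces $i_+(B) = i_-(B)$ for every $B$, so the only freedom in the inertia triple $(i_+, i_-, i_0)$ lies in the single parameter $i_0(B)$, the number of eigenvalues with zero real part (equivalently, since these come in conjugate/negation pairs, the purely imaginary eigenvalues together with the zero eigenvalues). Hence requiring a unique inertia reduces to showing that $i_0(B)$ is constant across the qualitative class.

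First I would make the reduction above explicit: write $i_+(B) = i_-(B) = (n - i_0(B))/2$, valid for all $B$ by Corollary~\ref{c1}, and observe that $\In(B)$ is therefore a function of $i_0(B)$ alone. Next I would invoke the hypothesis that $\mathcal{P}$ does not allow repeated eigenvalues with zero real part. The intended mechanism is the perturbation argument developed in the excerpt around $B_\gamma(\epsilon)$ and its composite-cycle analogue $B_\Gamma(\epsilon)$: because eigenvalues vary continuously with the matrix entries, if two distinct matrices in the qualitative class had different values of $i_0$, one could connect them by a path within $\mathcal{Q}(\mathcal{P})$ (the qualitative class is connected, being a product of open rays and $\{0\}$'s) along which $i_0$ would have to change. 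Eigenvalues can only move onto or off of the imaginary axis by colliding, and by the spectral symmetry from Corollary~\ref{c1} any eigenvalue crossing the imaginary axis would have to coincide with its negation on the axis, producing a repeated eigenvalue with zero real part at the moment of transition. This is exactly what the hypothesis forbids.

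I would then argue the constancy of $i_0$ rigorously as follows. Consider the continuous map $B \mapsto (\text{number of eigenvalues with zero real part, counted with multiplicity})$ on the connected set $\mathcal{Q}(\mathcal{P})$. Away from matrices having a repeated eigenvalue with zero real part, this count is locally constant, because a simple eigenvalue on the imaginary axis that is forced by symmetry to be either $0$ or part of a pair $\{\lambda, -\lambda\}$ cannot leave the axis without first meeting another eigenvalue there (a nonreal eigenvalue leaving the axis must do so together with a symmetric partner, and a crossing at a point other than those already on the axis would again create a collision). Since the hypothesis rules out every matrix in $\mathcal{Q}(\mathcal{P})$ with a repeated zero-real-part eigenvalue, the count is locally constant everywhere on $\mathcal{Q}(\mathcal{P})$, hence constant on the connected class. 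Therefore $i_0(B)$, and with it the entire inertia, is the same for all $B \in \mathcal{Q}(\mathcal{P})$, which is the assertion that $\mathcal{P}$ requires a unique inertia.

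The main obstacle I anticipate is making the local-constancy step airtight, namely justifying precisely why, under spectral symmetry, an eigenvalue cannot enter or exit the imaginary axis without creating a repeated zero-real-part eigenvalue. One must handle three transition types carefully: a real eigenvalue pair $\{\lambda, -\lambda\}$ colliding at $0$ (which would make $0$ repeated), a purely imaginary conjugate pair merging with its negation, and a complex quadruple $\{\pm a \pm bi\}$ degenerating onto the axis. In each case the symmetry from Corollary~\ref{c1} should guarantee that the transition point carries a repeated eigenvalue with zero real part; I would verify this case analysis and, if helpful, lean on Lemma~\ref{l1} of Eschenbach et al.\ to control the real-eigenvalue behaviour, since consistency of the eigenvalue frequency $S_B$ complements the count of zero-real-part eigenvalues.
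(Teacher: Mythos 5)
Your proposal is correct and follows essentially the same route as the paper: both arguments exploit the spectral symmetry $\lambda\mapsto-\lambda$ from Corollary~\ref{c1} together with continuity of eigenvalues along a connected family in $\mathcal{Q}(\mathcal{P})$, and both use the simplicity of the zero-real-part eigenvalues to show that no eigenvalue can enter or leave the imaginary axis (the paper formalizes your ``collision'' step by placing disjoint disks of radius $\epsilon_c$ around the eigenvalues of $B(c)$ along the segment $B(t)=(1-t)B_1+tB_2$ and invoking compactness of $[0,1]$). Your preliminary reduction $i_+(B)=i_-(B)=(n-i_0(B))/2$ is a small streamlining the paper does not state explicitly, but it does not change the substance of the argument.
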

	\begin{proof}
		Let $B_1,B_2\in \mathcal{Q}(\mathcal{P})$ be arbitrary. Define $$B(t)=(1-t)B_1+tB_2$$ and $\In(B(t))=(i_+(t),i_-(t),i_0(t))$, where $i_+(t)+i_-(t)+i_0(t)=n$. If $b_1=b_2$, then $(1-t)b_1+tb_2=b_1$ for all $t$. Also, if $b_1 \neq b_2$ and $\sgn(b_1)=\sgn(b_2)$, then $b_1+t(b_2-b_1)$ has the same sign as $b_1$ for all $t\in [0-\delta, 1+\delta]$, where $$0<\delta<\frac{1}{|b_2-b_1|}\min\{|b_1|,|b_2|\}.$$ Therefore, for sufficiently small $\delta>0$, $B(t)\in \mathcal{Q}(\mathcal{P})$ for all $t\in [0-\delta,1+\delta]$. Let $c\in [0,1]$ and $B(c)$ have $i_0(c)$ eigenvalues with zero real part, then by hypothesis, they are all distinct. Suppose that $\lambda_1, \lambda_2,...,\lambda_{i_0(c)}$ are the eigenvalues with zero real part and let the other eigenvalues of $B(c)$ be $\lambda_{i_0(c)+1}, \lambda_{i_0(c)+2},...,\lambda_n$. Consider 
		\begin{align*}
			\epsilon_1&=\min\left\{ \frac{|\lambda_i-\lambda_j|}{2}: 1\leq i<j\leq n~ \text{and} ~ \lambda_i\neq \lambda_j\right\},\\ \epsilon_2&=\min\{|Re(\lambda_j)|: i_0(c)+1\leq j\leq n\},\\ \epsilon_c&=\min\{\epsilon_1,\epsilon_2\}~ \text{and} ~ D_i=\{c\in \mathbb{C}: |x-\lambda_i|<\epsilon_c\}.
		\end{align*}
		Then we have $D_i\cap D_j=\emptyset$ whenever $\lambda_i\neq \lambda_j$. Since the eigenvalues of $B(t)$ are continuous functions of $t$, therefore there exists a $\delta_c>0$ such that for all $i$, $1\leq i\leq i_0(c)$, the disk $D_i$ contains exactly one eigenvalue of $B(t)$ and for each $j$, $i_0(c)+1\leq j\leq n$, if $m_j$ is the algebraic multiplicity of $\lambda_j$, then the disk $D_j$ contains exactly $m_j$ eigenvalues of $B(t)$ whenever $|t-c|<\delta_c$. Since $B(t)$ is a real matrix whose underlying signed undirected graph is a tree with no loop, by Corollary \ref{c1} if $\lambda$ is an eigenvalue of $B(t)$, then $-\lambda$ is also an eigenvalue of $B(t)$, so the eigenvalue of $B(t)$ in $D_i$, $1\leq i\leq i_0(c)$ has zero real part.
		%*imp*- that the coefficients of the characteristic polynomial are real. so, non-real eigenvalues occur in complex conjugate pairs.
		Also, for $i_0(c)+1\leq i\leq n$, since $D_i$ does not intersect the imaginary axis, we conclude that the real part of each eigenvalue of $B(t)$ in $D_i$ has the same sign as $Re(\lambda_i)$. Thus, $\In(B(t))=\In(B(c))$, for all $t\in(c-\delta_c,c+\delta_c)$. Since $c\in [0,1]$ is arbitrary, $\{(c-\delta_c,c+\delta_c)\}_c$ forms an open cover of $[0,1]$, which has a finite subcover. Let $\{(c-\delta_c,c+\delta_c)\}_{c\in S}$ where $S=\{c_1,c_2,...,c_l\}$ be a finite subcover of $[0,1]$. Since $\In(B(t))$ is constant in each $(c-\delta_c,c+\delta_c)$, $c\in S$, $\In(B(t))$ is constant for all $t\in [0,1]$. Therefore, $\In(B_1)=\In(B_2)$ and $\mathcal{P}$ require a unique inertia.
	\end{proof}
	The above result is not true in general if the underlying signed undirected graph of the tree sign pattern $\mathcal{P}$ contains loops.
	\begin{eg}
		Let $$\mathcal{P}=\begin{bmatrix}
			+&+\\+&+
		\end{bmatrix}.$$ Then $\text{trace}(A)>0$ for all $A\in \mathcal{Q}(\mathcal{P})$, therefore $\mathcal{P}$ does not allow repeated eigenvalues with zero real part. However, $\mathcal{P}$ does not require a unique inertia, since $(2,0,0),(1,0,1)\in \In(\mathcal{P})$.   
	\end{eg}
	Also, the conclusion of Theorem \ref{l3.3ne} is not true if $\mathcal{P}$ is not a tree sign pattern, even if it has a $0$-diagonal.
	\begin{eg}\label{example2.6}
		Let $$\mathcal{P}=\begin{bmatrix}
			0&+&-\\-&0&+\\+&-&0
		\end{bmatrix}.$$ Suppose that $$B=\begin{bmatrix}
			0&a_1&-a_2\\-a_3&0&a_4\\a_5&-a_6&0
		\end{bmatrix}\in \mathcal{Q}(\mathcal{P}),$$
		where $a_i>0$ for $i=1,2,...,6$. Then $\Ch_B(x)=x^3+(a_1a_3+a_2a_5+a_4a_6)x+(a_2a_3a_6-a_1a_4a_5)$. Then we have the following cases.
		\begin{itemize}
			\item[i.] If $a_2a_3a_6-a_1a_4a_5>0$, then
			$$
			\sgn(\Ch_B(x)) = \begin{cases} 
				(+)+(+)(+)+(+) & \text{if} ~ x> 0 \\
				(-)+(+)(-)+(+) & \text{if} ~x<0.
			\end{cases}
			$$
			So $V_+(x)=0$ and $V_-(x)=1$, and by Descartes' rule of signs, $\Ch_B(x)$ has exactly one negative root and no positive or zero root.
			\item[ii.] If $a_2a_3a_6-a_1a_4a_5<0$, then
			$$
			\sgn(\Ch_B(x)) = \begin{cases} 
				(+)+(+)(+)+(-) & \text{if} ~ x> 0 \\
				(-)+(+)(-)+(-) & \text{if} ~x<0.
			\end{cases}
			$$
			So $V_+(x)=1$ and $V_-(x)=0$, and by Descartes' rule of signs, $\Ch_B(x)$ has exactly one positive root and no negative or zero root.
			\item[iii.] If $a_2a_3a_6-a_1a_4a_5=0$, then
			$\Ch_B(x)=x(x^2+(a_1a_3+a_2a_5+a_4a_6))$, thus $0$ is the only real root of $\Ch_B(x)$.
		\end{itemize} Hence, any $B\in \mathcal{Q}(\mathcal{P})$ has exactly one real eigenvalue. Therefore, $\mathcal{P}$ has all distinct eigenvalues and does not allow repeated eigenvalues with zero real part. However,
		\begin{itemize}
			\item[i.] if $a_i=1$ for all $i=1,2,...,6$, then $\In(B)=(0,0,3),$
			\item[ii.] if $a_1=2$ and $a_i=1$ for $i=2,3,...,6$, then $\In(B)=(1,2,0).$
		\end{itemize}
		So, $\mathcal{P}$ does not require a unique inertia. 
	\end{eg}

	The converse of Theorem \ref{l3.3ne} is not true for tree sign patterns with a $0$-diagonal, as the following example shows.
	\begin{eg}
		Let $$\mathcal{P}=\begin{bmatrix}
			0&+&+&+\\+&0&0&0\\+&0&0&0\\+&0&0&0
		\end{bmatrix}.$$ Then $\mathcal{P}$ is a sign symmetric tree sign pattern with zero diagonal, and by Remark~4.1\cite{2018}, every matrix $B\in\mathcal{Q}(\mathcal{P})$ is similar to a symmetric matrix, so by Theorem 4.3\cite{2001}, $\mathcal{P}$ requires a unique inertia. However, for $B\in \mathcal{Q(\mathcal{P})}$ with all nonzero entries equal to $1$, the eigenvalues of $B$ are $0,0,\sqrt{3},-\sqrt{3}$. Here, the eigenvalue $0$ is repeated.
	\end{eg}
	%*****delete******However, the converse of Lemma \ref{l3.3ne} is true for all tridiagonal sign patterns of order at most $5$, as proved in Corollary \ref{cor3.8n}.

	%\textbf{ The next theorem gives a necessary condition for irreducible tridiagonal sign patterns with a $0$-diagonal to require a unique inertia. First, we need to prove a lemma.}

	For symmetric sign patterns whose underlying graphs are trees, Hall et al. \cite{2001} obtained the following result.
	\begin{theorem}[Theorem 4.5, \cite{2001}]  Let $\mathcal{A}$ be a symmetric tree sign pattern, with the maximum length of the cycles in A equal to $m\geq 1$. Then $\mathcal{A}$ requires a unique inertia if and only if all the terms in $E_m(B)$ have the same sign for any $B \in \mathcal{Q}(\mathcal{A})$. In this case, $\mathcal{A}$ requires rank $m$.
	\end{theorem}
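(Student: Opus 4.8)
The statement is a biconditional together with the rank assertion, so I would prove three things: that the common-sign condition is sufficient for a unique inertia, that when it holds the required rank is $m$, and that the condition is necessary. The backbone of all three is the relation between $\Ch_B(x)$, the rank, and the inertia of a symmetric matrix. Since the underlying graph is a tree, the only simple cycles are loops and $2$-cycles, so every composite cycle is a vertex-disjoint union of loops and edges and $E_k(B)=0$ for $k>m$. Writing $k^\ast=\max\{k:E_k(B)\neq 0\}$, the lowest-degree term of $\Ch_B(x)=\sum_{k=0}^m(-1)^kE_k(B)x^{n-k}$ is $(-1)^{k^\ast}E_{k^\ast}(B)x^{n-k^\ast}$, so $0$ is an eigenvalue of algebraic multiplicity $n-k^\ast$. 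Because $B$ is symmetric, algebraic and geometric multiplicities of $0$ agree, whence $\rk(B)=k^\ast$ and $i_0(B)=n-k^\ast$. In particular $\rk(B)\le m$ for every $B$, with equality exactly when $E_m(B)\neq 0$.

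For sufficiency, suppose every term of $E_m(B)$ carries the same sign. Each term is that sign times a positive product of magnitudes, and at least one term exists since $m$ is attained; hence $E_m(B)\neq 0$ for every $B\in\mathcal{Q}(\mathcal{A})$, giving $\rk(B)=m$ and $i_0(B)=n-m$ for all $B$. This already yields the rank assertion. To upgrade the constant nullity to a constant inertia, I would run the homotopy argument of Theorem~\ref{l3.3ne}: for $B_1,B_2\in\mathcal{Q}(\mathcal{A})$ set $B(t)=(1-t)B_1+tB_2$, which is symmetric and lies in $\mathcal{Q}(\mathcal{A})$ for $t\in[0,1]$ by convexity of the sign class. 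Its eigenvalues are real and vary continuously, and the multiplicity of the eigenvalue $0$ is $n-m$ for every $t$. Consequently no eigenvalue can pass through $0$ (that would momentarily raise $i_0$ above $n-m$), so $i_+(B(t))$ and $i_-(B(t))$ are constant; thus $\In(B_1)=\In(B_2)$.

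For necessity I would argue the contrapositive: if the terms of $E_m(B)$ do not all share a sign, I produce two matrices in $\mathcal{Q}(\mathcal{A})$ with different nullities, hence different inertias. The crucial structural point is that, because $B$ is symmetric and the graph is a tree, a loop contributes a single variable $\ell_i=|b_{ii}|$ and an edge contributes $b_{ij}b_{ji}=b_{ij}^{2}$, a single variable $e_{ij}>0$; distinct composite $m$-cycles give distinct monomials, each with coefficient $\pm1$. Hence $E_m$ is a \emph{multilinear} polynomial in the positive variables $\{\ell_i\}\cup\{e_{ij}\}$ whose coefficients include both $+1$ and $-1$. For such a multilinear polynomial one can make any chosen monomial dominate by sending its variables to $+\infty$ and the remaining variables to $0$; choosing a $+1$ monomial gives a point where $E_m>0$ and a $-1$ monomial a point where $E_m<0$. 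By continuity on the connected positive orthant $E_m$ vanishes at some admissible parameter value, yielding $B^\ast\in\mathcal{Q}(\mathcal{A})$ with $E_m(B^\ast)=0$, so $\rk(B^\ast)\le m-1$ and $i_0(B^\ast)\ge n-m+1$. Since $E_m\not\equiv 0$, there is also $B'\in\mathcal{Q}(\mathcal{A})$ with $E_m(B')\neq 0$ and $i_0(B')=n-m$. The two inertias differ, so $\mathcal{A}$ does not require a unique inertia.

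The step I expect to be most delicate is exactly this last one: having opposite signs among the \emph{coefficients} of $E_m$ does not, for a general polynomial, force $E_m$ to change sign over the admissible positive parameters (for instance $x^2-xz+z^2>0$ despite a negative coefficient). The argument is rescued only by multilinearity, which holds here precisely because the tree structure restricts cycles to loops and $2$-cycles and symmetry collapses each $2$-cycle to a single squared variable. I would therefore isolate and prove the multilinearity of $E_m$ together with the dominant-monomial lemma for multilinear polynomials as a separate step before assembling the necessity direction.
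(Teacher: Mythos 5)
The paper does not actually prove this statement: it is quoted verbatim from Hall, Li and Wang \cite{2001} (their Theorem 4.5), whose own argument runs through the symmetric minimal and maximal ranks (their Theorem 4.3: a symmetric pattern requires a unique inertia iff these two ranks coincide, and for tree patterns both ranks are controlled by $E_m$). Your proof is a self-contained alternative and is essentially correct: the identification $\rk(B)=k^{\ast}$ and $i_0(B)=n-k^{\ast}$ from the lowest nonvanishing coefficient, the homotopy argument for sufficiency (constant nullity plus real, continuously varying eigenvalues forces constant $i_{\pm}$), and the multilinearity-plus-dominant-monomial argument for necessity, including the intermediate value step on the connected positive orthant, all go through; you are also right that multilinearity is precisely what rescues the sign-change step, and a coupled scaling (the chosen monomial's variables set to $t$, the rest to $1/t$) does make it strictly dominate. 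The one point you must repair is the repeated assertion that $B\in\mathcal{Q}(\mathcal{A})$ is symmetric: a symmetric \emph{sign pattern} only forces $\sgn(b_{ij})=\sgn(b_{ji})$, not $b_{ij}=b_{ji}$, so $B$ need not be symmetric and you cannot directly invoke the spectral theorem for the reality of the eigenvalues, for the equality of the algebraic and geometric multiplicities at $0$, or for writing $e_{ij}=b_{ij}^{2}$. The fix is the standard one the paper itself uses elsewhere (Remark 4.1 of \cite{2018}): for a sign-symmetric tree pattern every $B\in\mathcal{Q}(\mathcal{A})$ is diagonally similar to a symmetric matrix, and diagonal similarity preserves rank, eigenvalues with their multiplicities, and the products $b_{ij}b_{ji}$, so one should set $e_{ij}=b_{ij}b_{ji}>0$ instead of $b_{ij}^{2}$. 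With that substitution every step of your argument, including the convexity of $\mathcal{Q}(\mathcal{A})$ used in the homotopy, is sound.
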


	We investigate the above result for general sign patterns which are not necessarily symmetric.
	
	\begin{lemma}\label{xnl1}
		Let $\mathcal{P}\in \mathcal{Q}_n$ be a sign pattern, whose underlying signed directed graph is $D$. Suppose that the maximum length of a composite cycle in $D$ is $m$, $m\geq 2$, then $\mathcal{P}$ requires a unique inertia only if all the composite cycles in $D$ of length $m$ have the same sign.
	\end{lemma}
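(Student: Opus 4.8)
The plan is to prove the contrapositive. Suppose $D$ contains two composite cycles $\Gamma_1,\Gamma_2$ of length $m$ with $\sgn(\Gamma_1)=+$ and $\sgn(\Gamma_2)=-$; I will produce two matrices in $\mathcal{Q}(\mathcal{P})$ with different inertias. The entire argument rests on one clean identity: for any $B\in\mathcal{Q}(\mathcal{P})$ with $E_m(B)\neq 0$, the coefficient $E_m(B)$ is the product of the nonzero eigenvalues of $B$, and its sign records the parity of $i_-(B)$.

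First I would record the structural consequence of $m$ being the maximal composite-cycle length: $\Ch_B(x)=x^{\,n-m}q_B(x)$, where $q_B(x)=x^m-E_1(B)x^{m-1}+\cdots+(-1)^mE_m(B)$. Whenever $E_m(B)\neq 0$ the factor $q_B$ has no zero root, so $0$ is an eigenvalue of exact multiplicity $n-m$ and the remaining eigenvalues $\mu_1,\dots,\mu_m$ are nonzero. Comparing $q_B(0)=(-1)^mE_m(B)$ with $q_B(0)=\prod_j(-\mu_j)=(-1)^m\prod_j\mu_j$ yields $\prod_j\mu_j=E_m(B)$. Now the parity step: since $B$ is real, its nonreal eigenvalues occur in conjugate pairs, each pair (in particular each purely imaginary pair) multiplying to a positive number; hence $\sgn\!\big(\prod_j\mu_j\big)=(-1)^{k}$, where $k$ is the number of negative real eigenvalues. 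Moreover every nonreal eigenvalue with negative real part is paired with its conjugate, so such eigenvalues contribute an even count to $i_-(B)$, giving $i_-(B)\equiv k\pmod 2$. Combining, $(-1)^{i_-(B)}=\sgn(E_m(B))$ for all $B\in\mathcal{Q}(\mathcal{P})$ with $E_m(B)\neq 0$. Thus, if $\mathcal{P}$ required a unique inertia, $i_-(B)$ — and hence $\sgn(E_m(B))$ — would be constant across all such $B$.

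Finally I would realize both signs using the matrices of $(\ref{e1})$ and $(\ref{e2})$. For $\Gamma_i$, the matrix $B_{\Gamma_i}(0)$ is, after a permutation similarity, block diagonal with one invertible weighted cyclic block $C_\ell$ per simple cycle $\gamma_\ell$ of $\Gamma_i$ and a zero block on the remaining $n-m$ vertices. Hence $B_{\Gamma_i}(0)$ has exactly $n-m$ zero eigenvalues, so $E_m(B_{\Gamma_i}(0))=\prod_\ell\det C_\ell$; and since $\det C_\ell=(-1)^{k_\ell-1}\prod_{e\in\gamma_\ell}(\text{entry})$ has sign $\sgn(\gamma_\ell)$, this product has sign $\prod_\ell\sgn(\gamma_\ell)=\sgn(\Gamma_i)\neq 0$. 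Because $E_m$ is a polynomial in the entries of the matrix, continuity gives $\sgn\!\big(E_m(B_{\Gamma_i}(\epsilon))\big)=\sgn(\Gamma_i)$ for all sufficiently small $\epsilon>0$, and $B_{\Gamma_i}(\epsilon)\in\mathcal{Q}(\mathcal{P})$ by $(\ref{e2})$. Setting $B_1=B_{\Gamma_1}(\epsilon)$ and $B_2=B_{\Gamma_2}(\epsilon)$ gives $\sgn(E_m(B_1))=+\neq-=\sgn(E_m(B_2))$, so $i_-(B_1)$ and $i_-(B_2)$ have opposite parities; in particular $\In(B_1)\neq\In(B_2)$, contradicting unique inertia and establishing the lemma.

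The step I expect to require the most care is the parity identity $(-1)^{i_-(B)}=\sgn(E_m(B))$: one must track precisely that conjugate pairs (including purely imaginary ones) never affect the sign of the product of nonzero eigenvalues while contributing evenly to $i_-$, and that the $n-m$ forced zero eigenvalues are excluded from both sides. Once this identity is secured, the remaining work — the block-diagonal computation of $E_m(B_{\Gamma_i}(0))$ and the continuity passage to $\epsilon>0$ — is routine.
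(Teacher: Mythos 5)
Your proposal is correct and takes essentially the same route as the paper's proof: both argue the contrapositive by building the perturbed matrices $B_{\Gamma_1}(\epsilon)$, $B_{\Gamma_2}(\epsilon')$ of \eqref{e2} from two oppositely signed length-$m$ composite cycles and then comparing the signs of $E_m$, which equals the product of the nonzero eigenvalues and whose sign fixes the parity of $i_-$. Your write-up merely makes explicit two steps the paper leaves terse — the parity identity $(-1)^{i_-(B)}=\sgn(E_m(B))$ and the continuity argument giving $\sgn(E_m(B_{\Gamma_i}(\epsilon)))=\sgn(\Gamma_i)$ for small $\epsilon$ — so no substantive difference remains.
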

	\begin{proof}
		Since the maximum length of a composite cycle in $D$ is $m$, the characteristic polynomial of any $B\in \mathcal{Q}(\mathcal{P})$ is given by, 
		$$\Ch_B(x)=x^n-E_1(B)x^{n-1}+E_2(B)x^{n-2}-\cdots+(-1)^mE_m(B)x^{n-m},$$
		where $E_k(B)$ for $1\leq k\leq m$ is the sum of all cycles (simple or composite) of length $k$ in $B$ properly signed. Suppose $\Gamma_1$, $\Gamma_2$ are two oppositely signed composite cycles in $D$ of length $m$. Then, by emphasizing the entries of $\mathcal{P}$ corresponding to $\Gamma_1$, $\Gamma_2$, we construct matrices $B_{\Gamma_1}(\epsilon), B_{\Gamma_2}(\epsilon') \in \mathcal{Q}(\mathcal{P})$ as in \ref{e2}, for some $\epsilon, \epsilon'>0$, such that $E_m(B_{\Gamma_1}(\epsilon))=-E_m(B_{\Gamma_2}(\epsilon'))$.
		
		Since $E_m(B)$ is equal to the product of all the nonzero eigenvalues of $B$, for any $B\in \mathcal{Q}(\mathcal{P})$ and the nonreal eigenvalues of $B$ occur in conjugate pairs, $E_m(B_{\Gamma_1}(\epsilon))=-E_m(B_{\Gamma_2}(\epsilon'))$ implies that $i_-(B_{\Gamma_1}(\epsilon))\neq i_-(B_{\Gamma_2}(\epsilon'))$. Therefore, $\mathcal{P}$ does not require a unique inertia.
	\end{proof}
	
	The converse of Theorem \ref{xnl1} is not true in general, as the following example shows.
	\begin{eg}\label{xxeg2.2}
		Suppose that $\mathcal{P}$ is an irreducible combinatorially symmetric sign pattern with a $0$-diagonal, whose underlying signed directed graph $D$ is given in Fig.\ref{fignx1}.
		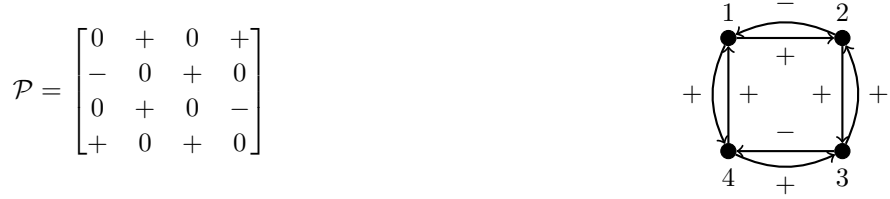
\begin{figure}[H]
			\centering
			\begin{minipage}{.45\textwidth}
				\vspace{-1cm}
				\[
				\mathcal{P}=\begin{bmatrix}
					0 & + & 0 & + \\
					- & 0 & + & 0 \\
					0 & + & 0 & - \\
					+ & 0 & + & 0
				\end{bmatrix}
				\]
			\end{minipage}
			\hspace{0.05\textwidth}
			\begin{minipage}{.45\textwidth}
				\centering
				\tikzset{node distance=1cm}
				\begin{tikzpicture}[
					vertex/.style={draw, circle, inner sep=0pt, minimum size=2mm, fill=black},
					edge/.style={thick, ->}
					]
					% Vertices
					\node[vertex,label=above:1] (v1) at (-0.75, 0.75) {};
					\node[vertex,label=above:2] (v2) at ( 0.75, 0.75) {};
					\node[vertex,label=below:3] (v3) at ( 0.75,-0.75) {};
					\node[vertex,label=below:4] (v4) at (-0.75,-0.75) {};
					
					% Edges
					\draw[edge] (v1) -- node[below] {$+$} (v2);
					\draw[edge, bend right=25] (v2) to node[above] {$-$} (v1);
					
					\draw[edge] (v2) -- node[left] {$+$} (v3);
					\draw[edge, bend right=25] (v3) to node[right] {$+$} (v2);
					
					\draw[edge] (v3) -- node[above] {$-$} (v4);
					\draw[edge, bend right=25] (v4) to node[below] {$+$} (v3);
					
					\draw[edge] (v4) -- node[right] {$+$} (v1);
					\draw[edge, bend right=25] (v1) to node[left] {$+$} (v4);
				\end{tikzpicture}
				\caption{Signed directed graph \(D\) of \(\mathcal{P}\).}
				\label{fignx1}
			\end{minipage}
			
		\end{figure}
		
		Then the maximum length of a composite cycle in $D$ is $4$, and all such cycles have the same sign, which is positive. However, consider	$$\begin{bmatrix}
			0&a&0&b\\ -c&0&d&0\\ 0&e&0&-f\\ g&0&h&0
		\end{bmatrix}\in \mathcal{Q}(\mathcal{P}), ~\text{where}~ a,b,c,d,e,f,g,h>0.$$ \begin{itemize}
			\item[i.] Let $B_1 \in \mathcal{Q}(\mathcal{P})$ with $a = b = c = d = e = f = g = h = 1$. Then $\sigma(B_1) = \{1+i, 1-i,-1-i,-1+i\}$, and $\In(B_1) = (2, 2, 0)$.
			\item[ii.] Let $B_2 \in \mathcal{Q}(\mathcal{P})$ with $a = 11$, $ b = c = d = e = f = g = h = 1$. Then $\sigma(B_2) = \{2i,-2i,\sqrt{6}i,-\sqrt{6}i\}$, and $\In(B_2) = (0, 0, 4)$.
		\end{itemize}
		Therefore, $\mathcal{P}$ does not require a unique inertia.
	\end{eg}
	
	\begin{theorem} \label{l3.1n}
		If $\mathcal{P}\in \mathcal{Q}_n$ is an irreducible tridiagonal sign pattern with a $0$-diagonal such that $\mathcal{P}$ allows repeated zero eigenvalues, then $\mathcal{P}$ does not require a unique inertia.
	\end{theorem}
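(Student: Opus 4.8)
The plan is to reduce the statement to a sign condition on the coefficient $E_{n-1}(B)$ and then to exhibit, inside $\mathcal{Q}(\mathcal{P})$, two matrices whose values of $i_+$ have opposite parity. First I would dispose of the even case: if $n$ is even, the only composite cycle of length $n$ in the signed directed graph of $\mathcal{P}$ is the unique perfect matching of the path, so $\det(B)$ equals, up to sign, the product of the corresponding off-diagonal entries; by irreducibility this is nonzero, hence $0$ is never an eigenvalue and the hypothesis is vacuous. So assume $n$ is odd. Then $\det(B)=0$, and by Corollary \ref{c1} the nonzero eigenvalues of any $B\in\mathcal{Q}(\mathcal{P})$ occur in pairs $\{\lambda,-\lambda\}$, so $\Ch_B(x)=x\,h(x)$ with $h$ an even polynomial of degree $n-1$ whose constant term is the coefficient of $x$ in $\Ch_B$, namely $h(0)=E_{n-1}(B)$. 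Consequently $B$ has a repeated zero eigenvalue if and only if $E_{n-1}(B)=0$, so the hypothesis provides some $B_0\in\mathcal{Q}(\mathcal{P})$ with $E_{n-1}(B_0)=0$.

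Next I would analyze $E_{n-1}(B)$ combinatorially. In the signed directed graph of a zero-diagonal tree the only cycles are the $2$-cycles on the edges, so the composite cycles of length $n-1$ are exactly the near-perfect matchings of the path, each leaving one odd-indexed vertex uncovered. Hence $E_{n-1}(B)=\sum_{M}\sigma_M\prod_{e\in M}t_e$, where the sum runs over near-perfect matchings $M$, the quantity $t_e>0$ is the magnitude of the product of the two entries on edge $e$, and $\sigma_M\in\{+1,-1\}$ is a sign fixed by $\mathcal{P}$. Since every $t_e>0$, the equality $E_{n-1}(B_0)=0$ forces the signs $\sigma_M$ to take both values.

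The bridge to inertia is the identity $\sgn\!\big(E_{n-1}(B)\big)=(-1)^{i_+(B)}$ whenever $E_{n-1}(B)\neq 0$. Indeed, in that case $0$ is a simple eigenvalue and $E_{n-1}(B)=h(0)=\prod_{\lambda\neq 0}\lambda$; grouping the nonzero eigenvalues into real pairs $\{\lambda,-\lambda\}$, purely imaginary pairs, and complex quadruples $\{\pm\alpha\pm i\beta\}$, each imaginary pair and each quadruple contributes a positive factor while each real pair contributes a negative factor, so the sign of the product is $(-1)^{a}$, where $a$ is the number of real pairs. Since $i_+(B)=a+2c$ with $c$ the number of quadruples, this equals $(-1)^{i_+(B)}$. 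Therefore two matrices in $\mathcal{Q}(\mathcal{P})$ with oppositely signed $E_{n-1}$ have $i_+$ of opposite parity and hence distinct inertias.

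It remains to realize both signs of $E_{n-1}$ in $\mathcal{Q}(\mathcal{P})$, which I expect to be the main obstacle. Fix any target near-perfect matching $M^*$. Assigning magnitudes $t_{e_i}=R^{w_i}$ makes the $M$-term scale like $R^{\sum_{e\in M}w_e}$; because every near-perfect matching has the same size and, as the left-out odd vertex increases by $2$, the matching merely swaps $e_{v+1}$ for $e_v$, the consecutive edge pairs $(e_v,e_{v+1})$ (with $v$ odd) partition the edge set. One can thus choose weights $w_i$ making $\sum_{e\in M}w_e$ strictly unimodal in the index of $M$ and uniquely maximized at $M=M^*$; letting $R\to\infty$ then forces $\sgn(E_{n-1})=\sigma_{M^*}$. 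Applying this to a matching of sign $+1$ and to one of sign $-1$ (both exist by the second paragraph) produces $B_1,B_2\in\mathcal{Q}(\mathcal{P})$ with $E_{n-1}(B_1)>0>E_{n-1}(B_2)$, whence $i_+(B_1)\not\equiv i_+(B_2)\pmod 2$, so $\In(B_1)\neq\In(B_2)$ and $\mathcal{P}$ does not require a unique inertia.
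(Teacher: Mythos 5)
Your argument is correct and follows essentially the same route as the paper: the hypothesis forces $E_{n-1}(B)=0$ for some $B$, hence two oppositely signed composite cycles (near-perfect matchings) of the maximal length $n-1$, and emphasizing each one realizes both signs of $E_{n-1}$, which, being the product of the nonzero eigenvalues, forces inertias of different parity. The paper packages the last two steps as Lemma~\ref{xnl1} (stated for general sign patterns, using $i_-$ rather than $i_+$ and a simple $\epsilon$-perturbation in place of your graded weights), but the substance is identical.
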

	\begin{proof} Since $\mathcal{P}$ allows zero eigenvalues, $n$ must be odd. Therefore, the maximum length of a composite cycle in the signed directed graph $D$ of $\mathcal{P}$ is $n-1$, which is even. Since $\mathcal{P}$ allows repeated zero eigenvalues, the underlying signed directed graph $D$ of $\mathcal{P}$ contains at least two composite cycles of length $n - 1$ with opposite signs.
		Therefore, from Lemma~\ref{xnl1}, it follows that $\mathcal{P}$ does not require a unique inertia.
		%Let $\Gamma_1=\alpha_1\alpha_2\cdots \alpha_{\frac{n-1}{2}}$ and $\Gamma_2=\beta_1\beta_2\cdots \beta_{\frac{n-1}{2}}$ be two composite cycles in $D$ of length $n-1$, oppositely signed, where each $\alpha_i$, $\beta_j$ is a $2$-cycle in $D$.
		%Since the maximum length of a composite cycle in $D$ is $n - 1$, and $\Gamma_1$, $\Gamma_2$ are two of such oppositely signed cycles, it 
	\end{proof}
	
	The converse of Theorem~\ref{l3.1n} is not true in general, as the following example shows.
	\begin{eg} Let $$\mathcal{P}=\begin{bmatrix}
			0&-&0&0\\ +&0&+&0\\ 0&+&0&-\\ 0&0&+&0
		\end{bmatrix}$$ be an irreducible tridiagonal sign pattern with a $0$-diagonal. Any real matrix $B\in \mathcal{Q}(\mathcal{P})$ is similar to	$$\begin{bmatrix}
			0&-1&0&0\\ a&0&1&0\\ 0&b&0&-1\\ 0&0&c&0
		\end{bmatrix}~\text{where} ~a,b,c>0.$$ Since $\det(B)\neq 0$, $\mathcal{P}$ does not allow any zero eigenvalue. However,
		\begin{itemize}
			\item[i.] let $B_1 \in \mathcal{Q}(\mathcal{P})$ with $a=c=1$, $b=4$. Then $\sigma(B_1)=\{1, 1, -1,-1\}$, and hence $In(B_1)=(2,2,0)$,
			\item[ii.] let $B_2 \in \mathcal{Q}(\mathcal{P})$ with $a=8$, $b=c=2$. Then $\sigma(B_2)=\{2i,2i, -2i,  -2i\}$, and hence $\In(B_2) = (0, 0, 4)$.
		\end{itemize} Hence, $\mathcal{P}$ does not requires a unique inertia.
	\end{eg}

	The conclusion of Theorem \ref{l3.1n} is not necessarily true for tree sign patterns of odd order with a $0$-diagonal.
	\begin{eg}
		Let $$\mathcal{P}=\begin{bmatrix}
			0&+&+&+&+\\+&0&0&0&0\\+&0&0&0&0\\+&0&0&0&0\\+&0&0&0&0
		\end{bmatrix}.$$ Then $rank(B)=2$ for all $B\in \mathcal{Q}(\mathcal{P})$, so $\mathcal{P}$ allows repeated zero eigenvalues. However, $\mathcal{P}$ is a symmetric tree sign pattern with a $0$-diagonal, and by Remark~4.1\cite{2018}, every matrix $B\in\mathcal{Q}(\mathcal{P})$ is similar to a symmetric matrix, so by Theorem 4.3\cite{2001}, $\mathcal{P}$ requires a unique inertia.
	\end{eg}

	%To state the next results, we need the following definition.
	\begin{definition}
		Let $\mathcal{P}\in \mathcal{Q}_n$ be a tree sign pattern with a $0$-diagonal, with the underlying signed undirected graph $G$. Then $\mathcal{P}_-$ is defined to be the sign pattern whose underlying signed undirected graph $G_-$ is obtained from $G$ by taking the opposite sign of each signed edge of $G$.
	\end{definition}
	
	\begin{eg} Let $\mathcal{P}\in \mathcal{Q}_n$ be a tree sign pattern with a $0$-diagonal, whose underlying signed undirected graph is $G$  given in Fig.\ref{figx1}.
		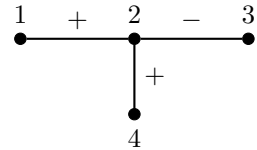
\begin{figure}[H]
			\centering
			\begin{minipage}{.45\textwidth}
				\vspace{-1cm}
				\[
				\mathcal{P}=\begin{bmatrix}
					0 & + & 0 & 0 \\
					+ & 0 & + & + \\
					0 & - & 0 & 0 \\
					0 & + & 0 & 0
				\end{bmatrix}
				\]
			\end{minipage}
			\hspace{0.05\textwidth}
			\begin{minipage}{.45\textwidth}
				\centering
				\tikzset{node distance=1cm}
				\begin{tikzpicture}[
					vertex/.style={draw, circle, inner sep=0pt, minimum size=.15cm, fill=black},
					edge/.style={thick}
					]
					% Vertices
					\node[vertex,label=above:1] (v1) at (-1.5,0) {};
					\node[vertex,label=above:2] (v2) at (0,0) {};
					\node[vertex,label=above:3] (v3) at (1.5,0) {};
					\node[vertex,label=below:4] (v4) at (0,-1) {};
					
					% Edges
					\draw[edge] (v1) -- node[above] {$+$} (v2);
					\draw[edge] (v2) -- node[above] {$-$} (v3);
					\draw[edge] (v4) -- node[right] {$+$} (v2);
				\end{tikzpicture}
				\caption{Signed undirected graph $ G$ of $\mathcal{P}$.}
				\label{figx1}
			\end{minipage}
		\end{figure}

		Then the underlying graph corresponding to $\mathcal{P}_-$ is $G_-$ given in Fig.\ref{fign2}.
		\begin{figure}[H]
			\centering
			% Left minipage: matrix
			\begin{minipage}{0.45\textwidth}
				\vspace{-1cm}
				\[
				\mathcal{P}_-=
				\begin{bmatrix}
					0 & + & 0 & 0 \\
					- & 0 & + & + \\
					0 & + & 0 & 0 \\
					0 & - & 0 & 0
				\end{bmatrix}
				\]
			\end{minipage}\hfill
			% Right minipage: TikZ figure
			\begin{minipage}{0.45\textwidth}
				\centering
				\tikzset{
					vertex/.style={draw, circle, inner sep=0pt, minimum size=.15cm, fill=black},
					edge/.style={thick}
				}
				\begin{tikzpicture}
					\node[vertex,label=above:1](v1)at(-1.5,0){};
					\node[vertex,label=above:2](v2)at(0,0){};
					\node[vertex,label=above:3](v3)at(1.5,0){};
					\node[vertex,label=below:4](v4)at(0,-1){};
					
					\draw[edge](v1)--node[above]{$-$}(v2);
					\draw[edge](v2)--node[above]{$+$}(v3);
					\draw[edge](v4)--node[right]{$-$}(v2);
				\end{tikzpicture}
				\caption{Signed undirected graph $G_-$ of $\mathcal{P}_-$.}
				\label{fign2}
			\end{minipage}
		\end{figure}
		
	\end{eg}
	\begin{lemma} \label{3.1} Let $\mathcal{P}\in \mathcal{Q}_n$ be a tree sign pattern with a $0$-diagonal.
		If $\lambda$ is an eigenvalue of $B=[b_{ij}]\in \mathcal{Q}(\mathcal{P})$ with algebraic multiplicity $m$, then $i\lambda$ is an eigenvalue of $B_-=[b^-_{ij}] \in \mathcal{Q}(\mathcal{P}_-)$ with algebraic multiplicity $m$, where $|b_{ij}|=|b^-_{ij}|$ for all $i,j$.
	\end{lemma}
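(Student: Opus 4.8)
The plan is to show that the characteristic polynomials of $B$ and $B_-$ are related by the substitution $x \mapsto ix$, namely $\Ch_{B_-}(ix) = i^{\,n}\,\Ch_B(x)$, from which both the eigenvalue correspondence and the equality of multiplicities will follow at once. First I would record the structural fact that, because the underlying undirected graph $G$ is a tree, the signed directed graph $D$ has no simple cycle of length exceeding $2$: a directed cycle through three or more distinct vertices would project onto an undirected cycle in $G$, which a tree cannot contain. Hence every composite cycle in $D$ is a vertex-disjoint union of $2$-cycles, so it has even length and corresponds to a matching in $G$. Consequently $E_k(B) = E_k(B_-) = 0$ for every odd $k$, and with the convention $E_0 = 1$ we may write
$$\Ch_B(x) = \sum_{j \ge 0} E_{2j}(B)\, x^{n-2j}, \qquad \Ch_{B_-}(x) = \sum_{j \ge 0} E_{2j}(B_-)\, x^{n-2j}.$$

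Second, I would compute each $E_{2j}$ combinatorially. A composite cycle of length $2j$ is determined by a matching $\mathcal{M}$ of $j$ edges of $G$, and by the cycle expansion of a principal minor its signed contribution is $(-1)^{j}\prod_{\{s,t\}\in\mathcal{M}} b_{st}b_{ts}$, the exponent $2j - j = j$ being the length minus the number of $2$-cycles. The crucial input from the hypotheses is that $b^-_{st}b^-_{ts} = -\,b_{st}b_{ts}$ for each edge $\{s,t\}$: indeed $|b^-_{st}b^-_{ts}| = |b_{st}b_{ts}|$ because the entrywise magnitudes agree, while the edge $\{s,t\}$ carries opposite signs in $G$ and in $G_-$, so the two products are negatives of one another. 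Summing over all matchings of size $j$ then yields $E_{2j}(B_-) = (-1)^{j} E_{2j}(B)$.

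Third, I would assemble the polynomial identity. Using $i^{\,n-2j} = i^{\,n}(-1)^{j}$,
$$\Ch_{B_-}(ix) = \sum_{j\ge 0} (-1)^{j} E_{2j}(B)\, (ix)^{n-2j} = i^{\,n}\sum_{j\ge 0} (-1)^{j}(-1)^{j} E_{2j}(B)\, x^{n-2j} = i^{\,n}\,\Ch_B(x).$$
To transfer multiplicities I would then write $\Ch_B(x) = (x-\lambda)^m h(x)$ with $h(\lambda)\neq 0$; substituting $x = -iy$ and using $-iy - \lambda = -i(y - i\lambda)$ gives $\Ch_{B_-}(y) = i^{\,n}(-i)^{m}(y - i\lambda)^{m} h(-iy)$, where the last factor does not vanish at $y = i\lambda$ since $h(-i\cdot i\lambda) = h(\lambda)\neq 0$. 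Hence $i\lambda$ is an eigenvalue of $B_-$ of algebraic multiplicity exactly $m$.

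The main obstacle I anticipate is the sign bookkeeping: getting the factor $(-1)^j$ in $E_{2j}$ correct and checking that it cancels against the $(-1)^j$ produced by $i^{\,n-2j}$, and then upgrading "$i\lambda$ is a root" to "$i\lambda$ is a root of the same multiplicity" cleanly through the factorisation rather than by merely evaluating $\Ch_{B_-}$ at $i\lambda$. As a reassurance, the identity can also be seen structurally: taking a $2$-colouring $X \cup Y$ of the tree and $D = \operatorname{diag}(d_v)$ with $d_v = 1$ on $X$ and $d_v = -i$ on $Y$, the matrix $iD^{-1}BD$ is a real matrix lying in $\mathcal{Q}(\mathcal{P}_-)$ with the same entrywise magnitudes as $B$ and with eigenvalues $i\lambda$; since for tree patterns with a $0$-diagonal the characteristic polynomial depends only on the products $b_{st}b_{ts}$, this matrix and $B_-$ share the same characteristic polynomial, which confirms the computation above.
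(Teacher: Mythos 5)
Your proof is correct and follows essentially the same route as the paper's: both reduce to the fact that for a tree pattern with a $0$-diagonal every composite cycle is a union of $2$-cycles, deduce $E_{2j}(B_-)=(-1)^jE_{2j}(B)$, and read off the correspondence $\lambda\mapsto i\lambda$ with preserved multiplicity from the resulting relation between the characteristic polynomials. Your explicit identity $\Ch_{B_-}(ix)=i^{\,n}\Ch_B(x)$ and the factorisation step make the multiplicity transfer more precise than the paper's one-line conclusion, and the diagonal-similarity remark is a nice independent check, but these are refinements of the same argument rather than a different one.
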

	\begin{proof}
		Suppose that $t$ is the maximum length of a composite cycle in the underlying signed directed graph $D$ of $\mathcal{P}$. Then the characteristic polynomial of $B$ is $$\Ch_B(x)=x^n-E_1(B)x^{n-1}+E_2(B)x^{n-2}-\cdots+(-1)^tE_t(B)x^{n-t},$$ where $E_k(B)$ for $1\leq k\leq t$, is the sum of all the cycles (simple or composite) of length $k$ in $B$, properly signed. 
		
		Let $n$ be even. Since $\mathcal{P}$ is a tree sign pattern of even order, $E_k(B)=0$ for all odd $k$. Therefore, $$\Ch_B(x)=x^n+\sum_{k=1}^{\frac{t}{2}}E_{2k}(B)x^{n-2k}.$$
		Since $\mathcal{P}_-$ is obtained from $\mathcal{P}$ by taking the opposite sign of each edge of the underlying signed undirected graph of $\mathcal{P}$, and $|b_{ij}|=|b_{ij}^-|$, we have 
		$$\Ch_{B_-}(x)=x^n+\sum_{k=1}^{\frac{t}{2}}(-1)^kE_{2k}(B)x^{n-2k}.$$
		Therefore, if $\lambda$ is a root of $\Ch_B(x)$ with multiplicity $m$, then $i\lambda$ is a root of $\Ch_{B_-}(x)$ with multiplicity $m$.
		
		If $n$ is odd, then $\Ch_B(x) = x(x^{n-1}+\sum_{k=1}^{\frac{t}{2}}E_{2k}(B)x^{n-2k-1})$. So, the result follows from the previous part.
	\end{proof}

	The following result is by Lin et al. \cite{2018}.

	\begin{theorem} [Theorem 2.4, \cite{2018}]\label{th4}
		Let $\mathcal{P}$ be an $n \times n$ sign pattern. Then $\mathcal{P}$ requires a unique inertia if and only if $i_0(A)$ is a fixed number for all $A \in \mathcal{Q}(\mathcal{P})$. Moreover, $\mathcal{P}$ requires a unique refined inertia if and only if $i_z(A)$ and $i_p(A)$ are two fixed numbers for all $A \in \mathcal{Q}(\mathcal{P})$.
	\end{theorem}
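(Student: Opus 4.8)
I would prove the two biconditionals in turn, in each case getting the forward (``only if'') implication for free and concentrating the work on the converse. For the inertia statement, if $\mathcal{P}$ requires a unique inertia then the whole triple $(i_+(A), i_-(A), i_0(A))$ is the same for every $A \in \mathcal{Q}(\mathcal{P})$, so in particular $i_0(A)$ is a fixed number; this settles one direction immediately. The substance is the converse: assuming $i_0(A) = d$ for every $A \in \mathcal{Q}(\mathcal{P})$, I must show that $i_+$ and $i_-$ are constant on $\mathcal{Q}(\mathcal{P})$ as well.

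The plan for the converse is a connectedness-plus-semicontinuity argument. First I would record that $\mathcal{Q}(\mathcal{P})$ is convex: if $B_1, B_2 \in \mathcal{Q}(\mathcal{P})$, then for each $(i,j)$ the entries share the sign $p_{ij}$, so every convex combination again has that sign; this is exactly the segment device already used in the proof of Theorem~\ref{l3.3ne}. Hence for arbitrary $B_1, B_2 \in \mathcal{Q}(\mathcal{P})$ the path $B(t) = (1-t)B_1 + t B_2$ stays in $\mathcal{Q}(\mathcal{P})$ and is continuous in $t \in [0,1]$. I would then set $f(t) = i_+(B(t))$ and $g(t) = i_-(B(t))$. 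The key point is that, since the eigenvalues of a matrix vary continuously with its entries (the fact already invoked in the text), the eigenvalues of $B(t_0)$ lying in the open right half-plane are bounded away from the imaginary axis and therefore remain there under small perturbations of $t$; thus $f(t) \ge f(t_0)$ for $t$ near $t_0$, so $f$ is integer-valued and lower semicontinuous, and likewise $g$.

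Now the hypothesis gives $f(t) + g(t) = n - i_0(B(t)) = n - d$, a constant. I would finish with an elementary lemma: two integer-valued lower semicontinuous functions on the connected interval $[0,1]$ whose sum is constant are each constant. Indeed, $g = (n-d) - f$ is upper semicontinuous because $f$ is lower semicontinuous, while $g$ is also lower semicontinuous by the above, so $g$ is continuous; a continuous integer-valued function on a connected set is constant, and symmetrically for $f$. Consequently $i_+(B_1) = f(0) = f(1) = i_+(B_2)$ and $i_-(B_1) = i_-(B_2)$, while $i_0$ agrees by hypothesis, so $\In(B_1) = \In(B_2)$. As $B_1, B_2$ were arbitrary, $\mathcal{P}$ requires a unique inertia.

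For the refined-inertia statement the ``only if'' direction is again immediate. For the converse I would use that every eigenvalue with zero real part is either $0$ or nonzero purely imaginary, so $i_0(A) = i_z(A) + 2 i_p(A)$ for each $A$; hence if $i_z$ and $i_p$ are both fixed on $\mathcal{Q}(\mathcal{P})$, then $i_0$ is fixed, and the inertia part just established forces $i_+$ and $i_-$ to be fixed too, making all four entries of $\RIn$ constant. I expect the only real obstacle to be making the semicontinuity step precise: it rests on the statement that the multiset of eigenvalues depends continuously on the matrix, so that one isolates the right-half-plane eigenvalues of $B(t_0)$ inside small disks bounded away from the imaginary axis and tracks that each disk keeps its eigenvalue for nearby $t$ --- essentially the disk-and-compactness bookkeeping carried out explicitly in the proof of Theorem~\ref{l3.3ne}, here repackaged more economically as semicontinuity. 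Everything else is formal.
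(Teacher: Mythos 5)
Your argument is correct. Note that the paper does not prove this statement at all --- it is quoted verbatim from Lin, Olesky and van den Driessche \cite{2018} --- so there is no in-paper proof to compare against; your convexity-of-$\mathcal{Q}(\mathcal{P})$ plus eigenvalue-continuity argument (lower semicontinuity of $i_+$ and $i_-$ along the segment, combined with the constant sum $i_+ + i_- = n - i_0$) is the standard route and is essentially the same machinery the paper itself deploys explicitly in the proof of Theorem~\ref{l3.3ne}, and the reduction of the refined-inertia claim via $i_0 = i_z + 2i_p$ is likewise sound.
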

	
	The following is an immediate consequence of the previous theorem.
	\begin{theorem}\label{lem2}
		Let $\mathcal{P}\in \mathcal{Q}_n$ and $A,B\in \mathcal{Q}(\mathcal{P})$ be such that $i_0(A)+(n-i_0(B))\geq n+1$, then $\mathcal{P}$ does not require a unique inertia. 
	\end{theorem}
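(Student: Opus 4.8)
The plan is to recognize that the seemingly unwieldy hypothesis $i_0(A)+(n-i_0(B))\geq n+1$ is nothing more than a disguised statement that $i_0$ fails to be constant on $\mathcal{Q}(\mathcal{P})$, and then to invoke Theorem~\ref{th4} directly. First I would rearrange the inequality: subtracting $n$ from both sides gives $i_0(A)-i_0(B)\geq 1$, which is equivalent to $i_0(A)>i_0(B)$. In particular $i_0(A)\neq i_0(B)$, so the two matrices $A,B\in\mathcal{Q}(\mathcal{P})$ witness that the quantity $i_0(\cdot)$ takes at least two distinct values over the qualitative class of $\mathcal{P}$.

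Having reduced the hypothesis to the statement that $i_0$ is not a fixed number on $\mathcal{Q}(\mathcal{P})$, the conclusion is immediate. Theorem~\ref{th4} asserts that $\mathcal{P}$ requires a unique inertia if and only if $i_0(A)$ is a fixed number for all $A\in\mathcal{Q}(\mathcal{P})$. Since we have produced $A,B\in\mathcal{Q}(\mathcal{P})$ with $i_0(A)\neq i_0(B)$, the characterization in Theorem~\ref{th4} fails, and therefore $\mathcal{P}$ does not require a unique inertia.

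There is essentially no obstacle here: the entire content of the theorem is the observation that the arithmetic condition encodes $i_0(A)>i_0(B)$, after which the result follows formally from the cited characterization. The only point requiring a moment of care is to confirm the direction of the inequality, namely that $i_0(A)+(n-i_0(B))\geq n+1$ collapses to $i_0(A)-i_0(B)\geq 1$ rather than to some weaker or stronger statement; this is a one-line algebraic check. Because the statement is flagged in the text as \emph{an immediate consequence of the previous theorem}, I would keep the write-up to these two steps and not belabor any further justification.
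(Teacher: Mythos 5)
Your proposal is correct and coincides with the paper's own proof: both reduce the inequality $i_0(A)+(n-i_0(B))\geq n+1$ to $i_0(A)\neq i_0(B)$ and then apply Theorem~\ref{th4}. Nothing further is needed.
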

	\begin{proof}
		Since $i_0(A)+(n-i_0(B))\geq n+1$, $i_0(A)\neq i_0(B)$, therefore, by Theorem \ref{th4}, $\mathcal{P}$ does not require a unique inertia.
	\end{proof}
	
	%Suppose that $\mathcal{P}$ is an $n\times n$ irreducible tridiagonal sign pattern with $0$-diagonal. We now specify the certain relation between requiring unique inertia and consistency of $\mathcal{P}$ and $\mathcal{P}_-$.
	
	\begin{theorem} \label{th2.3}
		Let $\mathcal{P}\in \mathcal{Q}_n$ be a tree sign pattern with a $0$-diagonal. Then $\mathcal{P}$ requires a unique inertia if and only if $\mathcal{P}_-$ is consistent.
	\end{theorem}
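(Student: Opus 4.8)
The plan is to combine the eigenvalue correspondence of Lemma~\ref{3.1} with the characterization of unique inertia in Theorem~\ref{th4}. The central observation is that for a tree sign pattern with a $0$-diagonal, the assignment $B \mapsto B_-$ that keeps every magnitude $|b_{ij}|$ fixed and reassigns the signs according to $\mathcal{P}_-$ is a bijection of $\mathcal{Q}(\mathcal{P})$ onto $\mathcal{Q}(\mathcal{P}_-)$: flipping the signs a second time recovers $B$, and the magnitudes may be arbitrary positive reals. Under this bijection Lemma~\ref{3.1} shows that the spectrum of $B_-$ is exactly $\{i\lambda : \lambda \in \sigma(B)\}$ with algebraic multiplicities preserved; since multiplication by $i$ is injective and the total multiplicities sum to $n$ on both sides, this is an equality of multisets.

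First I would translate ``zero real part'' into ``real'' by multiplying by $i$. An eigenvalue $\lambda$ of $B$ has zero real part precisely when $\lambda = i\nu$ for some real $\nu$ (allowing $\nu = 0$), which holds if and only if $i\lambda = -\nu$ is real. Consequently, counting with algebraic multiplicity, the number $i_0(B)$ of eigenvalues of $B$ on the imaginary axis equals the number of real eigenvalues of $B_-$, i.e. the first coordinate of the eigenvalue frequency $S_{B_-}$. Because $B \mapsto B_-$ is a bijection between the two qualitative classes, the set of values attained by $i_0$ on $\mathcal{Q}(\mathcal{P})$ coincides with the set of values attained by the real-eigenvalue count on $\mathcal{Q}(\mathcal{P}_-)$.

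From here the equivalence follows in both directions. If $\mathcal{P}$ requires a unique inertia, then by Theorem~\ref{th4} the quantity $i_0(B)$ is the same for every $B \in \mathcal{Q}(\mathcal{P})$; by the previous paragraph the real-eigenvalue count of $A$ is then constant over $A \in \mathcal{Q}(\mathcal{P}_-)$, so $S_A = (k, n-k)$ for a fixed $k$ and $\mathcal{P}_-$ is consistent. Conversely, if $\mathcal{P}_-$ is consistent, the real-eigenvalue count is fixed on $\mathcal{Q}(\mathcal{P}_-)$, hence $i_0$ is fixed on $\mathcal{Q}(\mathcal{P})$, and Theorem~\ref{th4} yields that $\mathcal{P}$ requires a unique inertia.

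Rather than a genuine obstacle, the only delicate points are bookkeeping: I must verify that $B \mapsto B_-$ is actually onto $\mathcal{Q}(\mathcal{P}_-)$ and that every count is taken with algebraic multiplicity, so that Lemma~\ref{3.1} applies verbatim. Since the eigenvalues with zero real part include the zero eigenvalue (the case $\nu = 0$), I would note explicitly that $\lambda = 0$ maps to $i\lambda = 0$, so the zero eigenvalues of $B$ contribute simultaneously to $i_0(B)$ and to the real eigenvalues of $B_-$, keeping the correspondence exact. No separate treatment of the parity of $n$ is required, as Lemma~\ref{3.1} already handles the odd case by factoring out $x$.
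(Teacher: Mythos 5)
Your proof is correct and follows essentially the same route as the paper's: both rest on the spectral correspondence between $B$ and $B_-$ from Lemma~\ref{3.1} (which the paper phrases as ``$iA$ and $A_-$ have the same characteristic polynomial'') combined with the characterization of requiring a unique inertia via the constancy of $i_0$ in Theorem~\ref{th4}. The additional bookkeeping you supply --- the bijection $B \mapsto B_-$ between the qualitative classes and the observation that $\lambda$ has zero real part exactly when $i\lambda$ is real --- is precisely what the paper's terse argument leaves implicit.
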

	\begin{proof}
		Suppose that $\mathcal{P}$ requires a unique inertia. By Theorem \ref{th4}, every $A=[a_{ij}]\in \mathcal{Q}(\mathcal{P})$ has a fixed number of eigenvalues with zero real part.  Since $iA$ and $A_-$ have the same characteristic polynomial, $A_-$ has a fixed number of real eigenvalues, where $A_-=[a^-_{ij}]\in \mathcal{Q}(\mathcal{P_-})$ with $|a_{ij}|=|a^-_{ij}|$ for all $i,j$. Since $A\in \mathcal{Q}(\mathcal{P})$ and $A_- \in \mathcal{Q}(\mathcal{P_-})$ are arbitrary, $\mathcal{P}_-$ is consistent.
		
		The converse follows similarly.
	\end{proof}
	%Clearly from the proof of the above theorem it follows.
	
	%	\begin{cor} \label{th5}
		%		Let $\mathcal{P}$ be an $n\times n$ irreducible tridiagonal sign pattern with $0$-diagonal. Then $\mathcal{P}$ requires unique inertia if and only if $\mathcal{P}_-$ is consistent.
		
		%	\end{cor}
	% The following is an immediate consequence of Theorem \ref{th2.3}.
	\begin{cor}\label{c2.2}
		Let $\mathcal{P}\in \mathcal{Q}_n$ be a tree sign pattern with a $0$-diagonal. Then $\mathcal{P}_-$ requires a unique inertia if and only if $\mathcal{P}$ is consistent.
	\end{cor}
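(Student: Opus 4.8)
The plan is to deduce this directly from Theorem~\ref{th2.3} by exploiting the involutive nature of the operation $\mathcal{P}\mapsto \mathcal{P}_-$. First I would observe that $\mathcal{P}_-$ is again a tree sign pattern with a $0$-diagonal: it has the same underlying tree (same vertex and edge sets) as $\mathcal{P}$, with the diagonal still zero, and the only change is that the sign of every edge of the signed undirected graph has been reversed. Hence Theorem~\ref{th2.3} applies verbatim with $\mathcal{P}_-$ playing the role of $\mathcal{P}$.

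The key point is then that applying the $-$ operation a second time restores the original pattern, that is, $(\mathcal{P}_-)_- = \mathcal{P}$. Indeed, $\mathcal{P}_-$ is obtained from $\mathcal{P}$ by reversing the sign of each edge of the underlying signed undirected graph $G$, and reversing every edge sign once more returns each edge to its original sign. I would therefore apply Theorem~\ref{th2.3} to $\mathcal{P}_-$ to conclude that $\mathcal{P}_-$ requires a unique inertia if and only if $(\mathcal{P}_-)_-$ is consistent, and then substitute $(\mathcal{P}_-)_- = \mathcal{P}$ to obtain exactly the assertion that $\mathcal{P}_-$ requires a unique inertia if and only if $\mathcal{P}$ is consistent.

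The one subtle point I would be careful to justify is the well-definedness of the identification $(\mathcal{P}_-)_- = \mathcal{P}$. The $-$ operation is prescribed only at the level of the signed undirected graph, so one must note that \emph{requires a unique inertia} and \emph{consistent} are genuinely properties of the equivalence class of the pattern; in particular they are invariant under signature similarity, which preserves the signed undirected graph. Since any two combinatorially symmetric patterns on a tree sharing the same signed undirected graph are signature similar (on a tree the sign choices compatible with fixed edge-sign products differ only by a diagonal $\pm1$ scaling), the double reversal $(\mathcal{P}_-)_-$ and $\mathcal{P}$ are interchangeable for both properties. I expect this bookkeeping to be the main (and essentially only) obstacle; once it is in place, the corollary follows by a direct substitution into Theorem~\ref{th2.3}, with no new estimates or constructions required.
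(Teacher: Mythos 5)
Your argument is exactly the one the paper intends: the corollary is stated without proof as an immediate consequence of Theorem~\ref{th2.3} applied to $\mathcal{P}_-$ together with the observation that $(\mathcal{P}_-)_-=\mathcal{P}$, which is precisely what you do. Your additional remark on well-definedness (that ``requires a unique inertia'' and ``consistent'' depend only on the signed undirected graph, via signature similarity on a tree) is a legitimate and correctly resolved piece of bookkeeping that the paper leaves implicit, not a different route.
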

	%\begin{proof}
	%    Similar to the proof of Theorem \ref{th2.3}.
	%\end{proof}

	The following result, by Eschenbach et al. \cite{1}, gives a necessary condition for irreducible tridiagonal sign patterns with a $0$-diagonal to be consistent. We give a similar necessary condition for such sign patterns to require a unique inertia.

	\begin{theorem} [Theorem 2.8, \cite{1}] \label{th11}
		Let $\mathcal{A}$ be an irreducible tridiagonal pattern with a $0$-diagonal. 
		Then $\mathcal{A}$ is consistent only if the signed undirected graph of $\mathcal{A}$ has, at most, one 
		maximal signed path with odd length. 
	\end{theorem}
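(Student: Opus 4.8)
The plan is to prove the contrapositive: if the signed path $G$ has two (or more) maximal signed paths of odd length, then $\mathcal{A}$ is not consistent, which I will show by exhibiting two matrices in $\mathcal{Q}(\mathcal{A})$ with different numbers of real eigenvalues. First I would fix the bookkeeping. Writing $B\in\mathcal{Q}(\mathcal{A})$ with super/sub-diagonal entries $b_i,c_i$, the sign of edge $i$ is $\sgn(b_ic_i)=\sgn(p_i)$ where $p_i=b_ic_i$; by the standard tridiagonal recursion $\Ch_B$ depends only on the products $p_i$, whose signs are pinned by the edges while the magnitudes $|p_i|$ range freely over $(0,\infty)$. By Corollary~\ref{c1} the eigenvalues occur in pairs $\pm\lambda$, with a forced zero eigenvalue when $n$ is odd. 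Substituting $y=x^2$ turns $\Ch_B$ into (a power of $x$ times) a real polynomial $Q(y)$, so real eigenvalues of $B$ correspond to nonnegative roots of $Q$, and consistency is equivalent to the number of positive roots of $Q$ (plus the number of zero eigenvalues) being constant over the class.

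Next I would record the two building blocks and the continuity engine. A standalone all-positive path on $m$ vertices is diagonally similar to a symmetric Jacobi matrix, hence has $m$ distinct real eigenvalues, including $0$ iff $m$ is odd; a standalone all-negative path on $m$ vertices has $Q$-roots that are the negatives of those of the positive path, hence all eigenvalues purely imaginary, including $0$ iff $m$ is odd. The engine is a decoupling-by-continuity argument: letting one magnitude $|p_i|\to 0^+$ (staying in $\mathcal{Q}(\mathcal{A})$) makes $B$ converge to the direct sum $B[1,\dots,i]\oplus B[i+1,\dots,n]$, and a \emph{simple} nonzero real root or simple negative root of the limiting $Q$ persists for nearby matrices (real polynomials keep simple real roots on each side of $0$). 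Thus simple real and simple purely imaginary eigenvalues of the limit survive, and the limiting real count is attained by genuine members of $\mathcal{Q}(\mathcal{A})$ — \emph{provided} the limit has at most one zero eigenvalue.

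The construction then uses the two odd-length runs. I would first do the base case of a path $+\cdots+\,-\cdots-$ made of a positive run of odd length $\ell_1$ and a negative run of odd length $\ell_2$ (so $n=\ell_1+\ell_2+1$ is odd). Shrinking the \emph{first} negative edge yields (positive path on $\ell_1+1$, an even order, so all real and no zero) $\oplus$ (negative path on $\ell_2$, an odd order, so exactly one zero), giving exactly one limiting zero and real count $(\ell_1+1)+1=\ell_1+2$. Shrinking instead the \emph{last} positive edge yields (positive path on $\ell_1$, odd order, one zero) $\oplus$ (negative path on $\ell_2+1$, even order, no zero), again one limiting zero but real count $\ell_1$. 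These differ, so $\mathcal{A}$ is not consistent; the mechanism is that both lengths being odd lets the shared vertex be routed into an even positive block or an even negative block, which either gains a real eigenvalue or loses an imaginary one. For the general case I would pick two odd runs, isolate by further edge-shrinkings a contiguous core containing them, choosing the complementary blocks to be monochromatic of even order (so they carry no zero and contribute an identical count in both configurations), and apply the same toggle inside the core, whose order is odd and which therefore carries exactly one forced zero.

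The step I expect to be the main obstacle is the control of the limiting zero eigenvalues. When a limiting direct sum has two or more coalesced zeros, they split into a $\pm$ pair that may be real or purely imaginary according to lower-order data not visible in the block decomposition (for instance, for $n$ even with diagonal-product sign fixed the coalesced pair is pushed to one side), so the additive count is unreliable there. The whole argument rests on producing two decompositions that each have at most one limiting zero yet assign that zero to blocks of opposite type — and this is exactly what two odd-length maximal signed paths make possible and a single one forbids, which is precisely the threshold in the statement. The remaining technical labor is choosing the cuts so the only parity defect sits in the core while the complementary blocks have even order; the persistence of the simple real and imaginary eigenvalues, given the $\pm$-pairing from Corollary~\ref{c1}, is then routine.
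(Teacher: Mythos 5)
This statement is imported from Eschenbach--Hall--Li \cite{1} and the paper gives no proof of it, so your argument can only be judged on its own merits. Your setup (dependence of $\Ch_B$ only on the products $p_i=b_ic_i$, the $\pm\lambda$ pairing, the substitution $y=x^2$, the two monochromatic building blocks, and persistence of simple roots of $Q$ under the decoupling limit $|p_i|\to 0^+$) is sound, and your base case $+^{\ell_1}-^{\ell_2}$ with $\ell_1,\ell_2$ odd is correct: the two cuts give real counts $\ell_1+2$ and $\ell_1$ respectively.

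The gap is in the reduction of the general case to this base case. Your recipe requires the complementary blocks to be monochromatic of \emph{even} order while the core has odd order; a vertex count shows this forces $n$ to be odd. But when $n$ is even the number of odd maximal runs has the same parity as $n-1$, so it is odd and hence at least $3$ under the hypothesis, and the complement of any odd-order core has odd order and cannot be tiled by even-order blocks. So every decomposition available to you produces at least two coalescing zero eigenvalues in the limit, exactly the situation you declare unreliable. The paper's own pattern $\mathcal{P}_4$ (edge signs $+,-,+$, $n=4$) already defeats the method: the unique all-even tiling is $\{1,2\}\oplus\{3,4\}$ (giving $4$ real eigenvalues), and the configuration with $0$ real eigenvalues (e.g.\ $Q(y)=y^2+8y+4$) arises only by emphasizing the single negative edge, whose direct-sum limit has a double zero whose splitting direction is governed by lower-order terms your framework refuses to analyze. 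The standard repair is to abandon exact counts in favour of one-sided bounds from \emph{two different} matrices: the union of maximum matchings of the positive runs is a positive matching $M_+$, likewise $M_-$ for the negative runs, and $|M_+|+|M_-|=\sum_j\lceil m_j/2\rceil\geq\frac{n-1}{2}+1$ precisely when at least two runs are odd; emphasizing $M_+$ yields a matrix with at least $2|M_+|$ real eigenvalues, emphasizing $M_-$ yields one with at most $n-2|M_-|<2|M_+|$ real eigenvalues, and no zero-splitting analysis is needed. This is the same device the paper itself uses in Lemma~\ref{xnl2} and Theorem~\ref{xxth3}(iii) for the inertia analogue, and it is almost certainly the mechanism behind Theorem~2.8 of \cite{1}.
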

	
	The next result shows that a path sign pattern with a $0$-diagonal requiring a unique inertia, also has at most one maximal signed path of odd length.

	\begin{theorem}\label{th2.51}
		Let $\mathcal{P}\in \mathcal{Q}_n$ be an irreducible tridiagonal sign pattern with a $0$-diagonal. Then $\mathcal{P}$ requires a unique inertia only if the signed undirected graph of $\mathcal{P}$ has, at most, one maximal signed path with odd length.
	\end{theorem}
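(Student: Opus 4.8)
The plan is to reduce the statement to the already-available necessary condition for consistency, Theorem~\ref{th11}, by passing through the negated pattern $\mathcal{P}_-$ via Theorem~\ref{th2.3}. First I would assume that $\mathcal{P}$ requires a unique inertia. Since $\mathcal{P}$ is a tridiagonal (hence tree) sign pattern with a $0$-diagonal, Theorem~\ref{th2.3} applies and tells us that $\mathcal{P}_-$ is consistent. I would then observe that $\mathcal{P}_-$ is again an irreducible tridiagonal sign pattern with a $0$-diagonal, because negating the sign of every edge neither creates nor destroys edges, preserves irreducibility and the path structure of the underlying graph, and leaves the diagonal zero. Consequently Theorem~\ref{th11} is applicable to $\mathcal{P}_-$, and it yields that the signed undirected graph $G_-$ of $\mathcal{P}_-$ has at most one maximal signed path of odd length.

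The remaining task is to transfer this conclusion from $G_-$ back to $G$, the signed undirected graph of $\mathcal{P}$. Here the key structural observation is that, for a path, the edges are linearly ordered, and the maximal signed paths are exactly the maximal runs of consecutive edges of the same sign; in other words, the maximal signed paths partition the edge set of $G$ into maximal monochromatic blocks. Negating every edge sends $G$ to $G_-$ and simply interchanges the labels $+$ and $-$ on each edge, so a maximal run of positive edges in $G$ becomes a maximal run of negative edges in $G_-$ (and vice versa) of the same length, while the boundaries between blocks are exactly preserved. Thus negation induces a length-preserving correspondence between the maximal signed paths of $G$ and those of $G_-$, and in particular the multiset of lengths of maximal signed paths of $G$ coincides with that of $G_-$.

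Combining these steps finishes the argument: the number of maximal signed paths of odd length in $G$ equals the number in $G_-$, which is at most one by the previous paragraph. I do not anticipate a serious obstacle here, since the result is essentially a composition of Theorem~\ref{th2.3} and Theorem~\ref{th11}; the only point that requires care—and which I would state explicitly as a short lemma or inline remark—is the verification that the edge-negation operation preserves the block decomposition of the path into maximal signed paths, and hence the count of odd-length ones. One subtlety worth making precise is that a single maximal signed \emph{positive} path of $G$ corresponds to a maximal signed \emph{negative} path of $G_-$, so it is the underlying set of maximal signed paths (ignoring the positive/negative designation) whose length multiset is invariant, which is exactly what is needed to compare odd-length counts.
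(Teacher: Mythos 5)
Your proposal is correct and follows essentially the same route as the paper's own proof: apply Theorem~\ref{th2.3} to conclude that $\mathcal{P}_-$ is consistent, invoke Theorem~\ref{th11} for $\mathcal{P}_-$, and then transfer the conclusion back to $G$ by noting that negating every edge preserves the decomposition into maximal signed paths and hence the count of odd-length ones. Your explicit justification of that last transfer step is slightly more detailed than the paper's one-line remark, but the argument is the same.
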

	\begin{proof}
		By Theorem \ref{th2.3}, $\mathcal{P}$ requires a unique inertia if and only if $\mathcal{P}_-$ is consistent. Also, by Theorem \ref{th11}, if $\mathcal{P}_-$ is consistent, then the signed undirected graph of $\mathcal{P}_-$ has at most one maximal signed path with odd length.
		
		Since the underlying signed undirected graph of $\mathcal{P}$ is obtained from the underlying signed undirected graph of $\mathcal{P}_-$ by taking the opposite sign of each signed edge, $\mathcal{P}$ has at most one maximal signed path of odd length. %, therefore, $\mathcal{P}$ requires a unique inertia only if the signed undirected graph has, at most, one maximal signed path of odd length.
	\end{proof}

	However, the above condition is not sufficient for a tridiagonal sign pattern with a $0$-diagonal to require a unique inertia. 
	\begin{eg} \label{neg11}
		Let       
		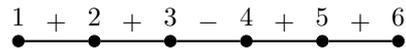
\begin{figure}[H]
			\centering
			\begin{minipage}{.45\textwidth}
				\vspace{-1cm}
				\[
				\mathcal{P}=\begin{bmatrix}
					0 & + & 0 & 0 & 0 & 0 \\
					+ & 0 & + & 0 & 0 & 0 \\
					0 & + & 0 & - & 0 & 0 \\
					0 & 0 & + & 0 & + & 0 \\
					0 & 0 & 0 & + & 0 & + \\
					0 & 0 & 0 & 0 & + & 0
				\end{bmatrix}
				\]
			\end{minipage}
			\hspace{0.05\textwidth}
			\begin{minipage}{.45\textwidth}
				\centering
				\tikzset{node distance=1cm}
				\begin{tikzpicture}[
					vertex/.style={draw, circle, inner sep=0pt, minimum size=.15cm, fill=black},
					edge/.style={thick}
					]
					% Vertices
					\node[vertex,label=above:1] (v1) at (-2,0) {};
					\node[vertex,label=above:2] (v2) at (-1,0) {};
					\node[vertex,label=above:3] (v3) at ( 0,0) {};
					\node[vertex,label=above:4] (v4) at ( 1,0) {};
					\node[vertex,label=above:5] (v5) at ( 2,0) {};
					\node[vertex,label=above:6] (v6) at ( 3,0) {};
					
					% Edges
					\draw[edge] (v1) -- node[above] {$+$} (v2);
					\draw[edge] (v2) -- node[above] {$+$} (v3);
					\draw[edge] (v3) -- node[above] {$-$} (v4);
					\draw[edge] (v4) -- node[above] {$+$} (v5);
					\draw[edge] (v5) -- node[above] {$+$} (v6);
				\end{tikzpicture}
				\caption{Signed undirected graph $ G$ of $\mathcal{P}$.}
				\label{xnfig11}
			\end{minipage}
		\end{figure}
		% This must go next to `\end{minipage}`
	be an irreducible tridiagonal sign pattern with a $0$-diagonal whose underlying signed undirected graph is $G$ given in Fig~\ref{xnfig11}. Clearly, $G$ has exactly one maximal signed path with odd length. Any real matrix $B\in \mathcal{Q}(\mathcal{P}),$ is similar to
	
	$$\begin{bmatrix}
		0&1&0&0&0&0\\ a&0&1&0&0&0\\ 0&b&0&-1&0&0\\ 0&0&c&0&1&0\\0&0&0&d&0&1\\0&0&0&0&e&0
	\end{bmatrix}, ~\text{where}~ a,b,c,d,e>0.$$
	\begin{itemize}
		\item[i.] If $a=e=\frac{1}{20}$, $b=d=5$, $c=20$ then $\sigma(B)=\{ 2.4401i, - 2.4401i, 1.9859i,- 1.9859i, 0.0461i, - 0.0461i\}$, so $\In(B)=(0,0,6)$.
		\item[ii.] If $a=b=e=d=e=1$ then $\sigma(B)=\{-1.3071 + 0.2151i, -1.3071 - 0.2151i, 1.3071 + 0.2151i, 1.3071 - 0.2151i, 0.5698i, - 0.5698i\}$, so $\In(B)=(2,2,2)$.
	\end{itemize}
	Therefore, $\mathcal{P}$ does not require a unique inertia, although it has exactly one maximal signed path with odd length.
\end{eg}

However, for irreducible tree sign patterns with a $0$-diagonal, the conclusion of Theorem~\ref{th2.51} may not hold.

\begin{eg}
	Let $\mathcal{P}$ be an irreducible tree sign pattern with a $0$-diagonal, whose underlying graph $G$ is given in Fig.~\ref{X.1}.
	\begin{figure}[H]
		\centering
		\begin{minipage}{.45\textwidth}
			\vspace{-1cm}
			\[
			\mathcal{P}=\begin{bmatrix}
				0 & - & 0 & 0 & 0 & 0 \\
				+ & 0 & + & 0 & 0 & 0 \\
				0 & + & 0 & + & 0 & + \\
				0 & 0 & + & 0 & - & 0 \\
				0 & 0 & 0 & + & 0 & 0 \\
				0 & 0 & + & 0 & 0 & 0
			\end{bmatrix}
			\]
		\end{minipage}
		\hspace{0.05\textwidth}
		\begin{minipage}{.45\textwidth}
			\centering
			\tikzset{node distance=1cm}
			\begin{tikzpicture}[
				vertex/.style={draw, circle, inner sep=0pt, minimum size=0.15cm, fill=black},
				edge/.style={thick}
				]
				% Vertices
				\node[vertex, label=above:$1$] (v1) at (0,0) {};
				\node[vertex, label=above:$2$] (v2) at (1.25,0) {};
				\node[vertex, label=above:$3$] (v3) at (2.5,0) {};
				\node[vertex, label=above:$4$] (v4) at (3.75,0) {};
				\node[vertex, label=above:$5$] (v5) at (5,0) {};
				\node[vertex, label=below:$6$] (v6) at (2.5,-1.25) {};
				
				% Edges
				\draw[edge] (v1) -- node[above] {$-$} (v2);
				\draw[edge] (v2) -- node[above] {$+$} (v3);
				\draw[edge] (v3) -- node[above] {$+$} (v4);
				\draw[edge] (v4) -- node[above] {$-$} (v5);
				\draw[edge] (v3) -- node[right] {$+$} (v6);
			\end{tikzpicture}
			\caption{Signed undirected graph $G$ of $\mathcal{P}$.}
			\label{X.1}
		\end{minipage}
	\end{figure}
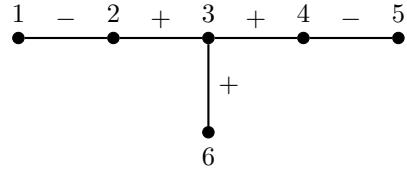
	
	Then any real matrix $B\in \mathcal{Q}(\mathcal{P})$ is similar to\\		
	$$
	\begin{bmatrix}
		0 & -a &  0&0  &0  & 0  \\
		1 & 0 & b  & 0&0 &0    \\
		0& 1 & 0  & c  & 0&e  \\
		0&0 & 1   & 0  & -d &0  \\
		0&0  &  0   & 1 & 0 & 0\\
		0&0  &  1 &  0& 0  & 0
	\end{bmatrix}\in \mathcal{Q}(\mathcal{P}),~ where~ a,b,c,d,e>0.$$
	The characteristic polynomial of $B$ is $\Ch_{B}(x)= x^6+E_2 x^4+E_4x^2+E_6$, where $E_2=a-b-c+d-e, E_4= -ac+ad-bd-ea-eb, E_6=-ead$.
	
	\textbf{Case 1:} Suppose $E_2 \geq 0$.  
	For all $x \in \mathbb{R} \setminus \{0\}$, we have
	\[
	\sgn(\Ch_{B}(x)) = 
	\begin{cases} 
		(+)+(+/0)(+)+(+/0)(+)+(-) & \text{if } E_4 \geq 0 \\[6pt]
		(+)+(+/0)(+)+(-)(+)+(-) & \text{if } E_4 < 0.
	\end{cases}
	\]
	Thus $V_+(x) = 1$ and $V_-(x) = 1$. By Descartes' rule of signs, $\Ch_B(x)$ has exactly one positive and one negative real root. Since $\det(B) \neq 0$, the eigenvalue frequency is $S_B = (2,4)$.
	
	\medskip
	
	\textbf{Case 2:} Suppose $E_2 < 0$. Then
	\begin{equation} \label{eq1}
		a - b - c + d - e < 0.
	\end{equation}
	If $E_4 \geq 0$, then
	\begin{equation} \label{eq2}
		-ac + ad - bd - ea - eb \geq 0
	\end{equation}
	\[
	\implies d \geq c+e+\left(\frac{b}{a}\right)d+\left(\frac{e}{a}\right)b 
	\;\;\implies\;\; d > c+e.
	\]
	Now, from \eqref{eq1},
	\[
	a+(d-c-e)-b < 0 \;\;\implies\;\; a < b.
	\]
	On the other hand, from \eqref{eq2},
	\[
	(a-b)d \geq ac+ea+ed > 0 \;\;\implies\;\; a > b,
	\]
	a contradiction. Therefore $E_4 < 0$.
	Hence, for all $x \in \mathbb{R} \setminus \{0\}$,
	\[
	\sgn(\Ch_{B}(x)) = (+)+(-)(+)+(-)(+)+(-).
	\]
	Thus $V_+(x) = 1$ and $V_-(x) = 1$. By Descartes' rule of signs, $\Ch_B(x)$ has exactly one positive and one negative real root. Since $\det(B) \neq 0$, the eigenvalue frequency is again $S_B = (2,4)$.
	In both cases, $S_B = (2,4)$ for all $B \in \mathcal{Q}(\mathcal{P})$, hence
	$
	S_{\mathcal{P}} = (2,4).
	$
	
	Therefore, by Corollary~\ref{c2.2}, $\mathcal{P}_-$ requires a unique inertia. However, the underlying signed undirected graph of $\mathcal{P}_-$ contains more than one maximal signed path of odd length.
\end{eg}

Next, we focus on identifying certain patterns that cannot occur as subpatterns of tridiagonal sign patterns requiring a unique inertia.
%%%%%%%%%%%%%%%%

Let, $$\mathcal{P}_4=\begin{bmatrix}
	0&+&0&0\\+&0&-&0\\0&+&0&+\\0&0&+&0
\end{bmatrix}~\text{and}~\mathcal{P}_6=\begin{bmatrix}
	0&+&0&0&0&0\\+&0&+&0&0&0\\0&+&0&-&0&0\\0&0&+&0&+&0\\0&0&0&+&0&+\\0&0&0&0&+&0
\end{bmatrix}.$$
\begin{theorem}\label{nth3.4}
	Let $\mathcal{P}\in \mathcal{Q}_n$ be an irreducible tridiagonal sign pattern with a $0$-diagonal. If $\mathcal{P}_4$ or ${\mathcal{P}_4}_-$ is a submatrix of $\mathcal{P}$, then $\mathcal{P}$ does not require a unique inertia.
\end{theorem}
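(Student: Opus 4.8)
The plan is to reduce, via Theorem~\ref{th4}, the statement to exhibiting two matrices $A,B\in\mathcal{Q}(\mathcal{P})$ with $i_0(A)\neq i_0(B)$, and to build these by exploiting the freedom inside the $\mathcal{P}_4$ block together with a decoupling perturbation of the rest of the path. Throughout I would use two structural facts about any $B\in\mathcal{Q}(\mathcal{P})$: since $\mathcal{P}$ is a zero-diagonal tree pattern, Corollary~\ref{c1} makes its spectrum symmetric under $\lambda\mapsto-\lambda$, and being real it is symmetric under $\lambda\mapsto\overline{\lambda}$; hence it is symmetric under $\lambda\mapsto-\overline{\lambda}$, whose fixed set is exactly the imaginary axis. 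I would also use that the eigenvalues of a zero-diagonal tridiagonal matrix depend only on the edge weights $w_i=b_{i,i+1}b_{i+1,i}$, whose signs are prescribed by $\mathcal{P}$.

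First I would record the behaviour of the isolated block. The continuant recurrence gives, for the $4\times4$ zero-diagonal tridiagonal matrix with weights $w_1,w_2,w_3$, the characteristic polynomial $x^4-(w_1+w_2+w_3)x^2+w_1w_3$, so the squared eigenvalues are the roots of $q(y)=y^2-(w_1+w_2+w_3)y+w_1w_3$. For $\mathcal{P}_4$ the signs are $w_1,w_3>0,\ w_2<0$, and for ${\mathcal{P}_4}_-$ they are $w_1,w_3<0,\ w_2>0$; in both cases $w_1w_3>0$, and a direct choice of magnitudes makes $q$ have either two distinct negative real roots (four simple purely imaginary eigenvalues, $i_0=4$) or a pair of non-real roots (four genuinely complex eigenvalues, $i_0=0$). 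This single computation drives the whole argument.

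Next, suppose the block occupies consecutive vertices $\{k,k+1,k+2,k+3\}$, and let $A,C$ be the sub-paths before and after it. I would fix generic weights respecting the signs of $\mathcal{P}$ on the internal edges of $A$ and $C$, set the two cut-edge weights equal to $\pm\epsilon$ (with the signs required by $\mathcal{P}$), and take the block weights in the two regimes above. As $\epsilon\to0$ the matrix degenerates to the block-diagonal sum of the $A$-, block-, and $C$-matrices, so its spectrum is the union of the three spectra; by scaling the block I can keep its four eigenvalues simple and disjoint from the spectra of $A$ and $C$, and by genericity the only unavoidable coincidence is a zero eigenvalue shared by $A$ and $C$ when both have odd order. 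For each simple eigenvalue of the limit, continuity together with the $\lambda\mapsto-\overline{\lambda}$ symmetry forces a purely imaginary eigenvalue to stay purely imaginary and an off-axis eigenvalue to stay off-axis; hence for small $\epsilon$ the joint contribution of $A$ and $C$ to $i_0$ is the same fixed number in both regimes, while the block contributes $4$ in one regime and $0$ in the other. (Taking $\mathcal{P}=\mathcal{P}_6$ recovers Example~\ref{neg11}.)

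The hard part will be the forced repeated zero eigenvalue that occurs precisely when $A$ and $C$ both have odd order, so that $n$ is even, the full matrix is nonsingular, and this double zero must split. Here the simplicity hypothesis of the symmetry argument fails, so the two split eigenvalues $\pm\lambda(\epsilon)$ may land on the real axis in one regime and on the imaginary axis in the other. However, the $\lambda\mapsto-\lambda$ and $\lambda\mapsto\overline{\lambda}$ symmetries force this cluster to be a single $\pm$-pair that is either real or purely imaginary, so it can alter $i_0$ by at most $2$, whereas the block already forces a difference of $4$ between the two regimes; thus $i_0$ still differs, by $2$, $4$, or $6$. When instead exactly one of $A,C$ has odd order the single zero is preserved (it is forced because $n$ is odd) and contributes equally to both regimes, and when both have even order there is no zero at all. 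In every case $i_0$ takes two different values on $\mathcal{Q}(\mathcal{P})$, so by Theorem~\ref{th4} (equivalently Theorem~\ref{lem2}) $\mathcal{P}$ does not require a unique inertia, and the identical computation for ${\mathcal{P}_4}_-$ finishes the proof.
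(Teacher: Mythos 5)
Your proposal is correct, but it follows a genuinely different route from the paper's. The paper first invokes Theorem~\ref{th2.51} to reduce to the case of exactly one maximal signed path of odd length (forcing $n$ even), observes that this positions a copy of $\mathcal{P}_6$ inside $\mathcal{P}$ so that the remaining $n-6$ vertices carry a perfect matching of $2$-cycles, and then builds two block matrices whose $\mathcal{P}_6$-blocks realize different inertias (Example~\ref{eg2.21}) while the matched $2$-cycles contribute widely separated \emph{simple} eigenvalues $\pm M^p$ or $\pm iM^p$; the perturbation argument then goes through with no multiple eigenvalues in sight. You skip the combinatorial reduction entirely, decouple directly into (left sub-path) $\oplus$ ($4$-block) $\oplus$ (right sub-path) via the similarity-invariant edge weights $w_i=b_{i,i+1}b_{i+1,i}$, read the two regimes $i_0\in\{0,4\}$ off the quadratic $y^2-(w_1+w_2+w_3)y+w_1w_3$, and must therefore confront the one degeneracy the paper's matching trick is designed to avoid: the double zero eigenvalue when both flanking sub-paths have odd order. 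Your resolution --- the split pair is forced by the $\lambda\mapsto-\lambda$ and $\lambda\mapsto\overline{\lambda}$ symmetries to be a single real or purely imaginary $\pm$-pair, hence moves $i_0$ by at most $2$ against the block's gap of $4$ --- is sound, and the remaining genericity claims (simplicity of the nonzero spectra of the sub-paths, achieved by specializing toward the decoupled matching and scaling) are routine to justify. What each approach buys: yours is more self-contained (no appeal to Theorem~\ref{th2.51}, no need for the $\mathcal{P}_6$ and $\mathcal{P}_8'$ examples) and isolates the $4\times4$ computation as the sole analytic input; the paper's version keeps every limit eigenvalue simple, which makes the continuity step cleaner and lets the identical template be reused verbatim for Theorem~\ref{nth3.18}. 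A fully written version of your argument should spell out the genericity of simple nonzero eigenvalues for an arbitrary signed zero-diagonal sub-path and note that the $\epsilon=0$ limit lies outside $\mathcal{Q}(\mathcal{P})$ but its spectrum is still the union of the three block spectra.
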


In order to prove the above theorem, we first show that each of $\mathcal{P}_4$, ${\mathcal{P}_4}_-$, $\mathcal{P}_6$, ${\mathcal{P}_6}_-$ does not require a unique inertia. Also, we show that if $\mathcal{P}$ is one of the above, then there exist real matrices $B_1, B_2\in \mathcal{Q}(\mathcal{P})$ such that $\In(B_1)\neq \In(B_2)$, where both $B_1, B_2$ have all distinct eigenvalues. 
%where $\mathcal{P}$ is one of $\mathcal{P}_4$, ${\mathcal{P}_4}_-$, $\mathcal{P}_6$, ${\mathcal{P}_6}_-$.

Since each of $\mathcal{P}_4$, ${\mathcal{P}_4}_-$ has more than one maximal signed path with odd length, by Theorem \ref{th2.51}, $\mathcal{P}_4$, ${\mathcal{P}_4}_-$ does not require a unique inertia. The following example shows that there exist $B_1, B_2\in \mathcal{Q}(\mathcal{P}_4)$ such that $\In(B_1)\neq \In(B_2)$ and both $B_1, B_2$ have all distinct eigenvalues.
\begin{eg} \label{eg3.1}
	Consider $B\in \mathcal{Q}(\mathcal{P}_4),$ then $B$ is similar to
	
	$$\begin{bmatrix}
		0&1&0&0\\ a&0&-1&0\\ 0&b&0&1\\ 0&0&c&0
	\end{bmatrix},~\text{where}~ a,b,c>0.$$ Then, $\Ch_B(x)=x^4+(-a+b-c)x^2+ac$. 
	\begin{itemize}
		\item[i.] If $a=b=c=1$ then $\sigma(B_1)=\{\frac{\sqrt{3}}{2}+\frac{i}{2},\frac{\sqrt{3}}{2}- \frac{i}{2},-\frac{\sqrt{3}}{2}+ \frac{i}{2},- \frac{\sqrt{3}}{2}-\frac{i}{2}\}$, $\In(B_1)=(2,2,0)$.
		\item[ii.] If $a=1$, $b=10$ and $c=4$ then $\sigma(B_2)=\{i,-i, 2i,-2i\}$, $\In(B_2)=(0,0,4)$.
	\end{itemize}
	By Lemma \ref{3.1}, there also exists $B_1,B_2\in \mathcal{Q}({\mathcal{P}_{4}}_-)$ such that both $B_1, B_2$ have all distinct eigenvalues and $\In(B_1)\neq \In(B_2)$.
\end{eg}

\begin{eg} \label{eg2.21}
	From Example \ref{neg11}, there exists $B_1, B_2 \in \mathcal{Q}(\mathcal{P}_6)$ such that both $B_1, B_2$ have all distinct eigenvalues and
	$\In(B_1)\neq \In(B_2)$. Similarly, Lemma \ref{3.1} implies that there exists $B_1, B_2 \in \mathcal{Q}({\mathcal{P}_{6}}_-)$ such that both $B_1, B_2$ have all distinct eigenvalues and $\In(B_1)\neq \In(B_2)$.
\end{eg}

\begin{proof}[Proof of Theorem \ref{nth3.4}]
	Suppose that $\mathcal{P}=[p_{ij}]\in \mathcal{Q}_n$ is an irreducible tridiagonal sign pattern with a $0$-diagonal such that $\mathcal{P}_4$ is a submatrix of $\mathcal{P}$, then $n\geq 4$. Let $G$ be the signed undirected graph corresponding to $\mathcal{P}$. If the number of maximal signed paths of odd length in $G$ is more than one, then by Theorem \ref{th2.51}, $\mathcal{P}$ does not require a unique inertia, so we assume that the number of maximal odd length paths in $G$ is exactly one, which implies that $n$ is even.
	
	% Let $G$ be the signed undirected graph corresponding to $\mathcal{P}$.
	%If $n$ is odd, then $G$ must contain more than one maximal signed path of odd length, so by Theorem \ref{th2.51}, $\mathcal{P}$ does not require a unique inertia.
	
	For $n=4$, by Example \ref{eg3.1}, $\mathcal{P}$ does not require a unique inertia. For $n=6$, since the number of maximal signed paths of odd length in $G$ is at most one, $\mathcal{P}$ is equivalent to $\mathcal{P}_6$, so by Example \ref{eg2.21}, $\mathcal{P}$ does not require a unique inertia. 
	
	For $n=2k$ with $k\geq4$, the underlying signed directed graph of $\mathcal{P}$ is given by $D = \alpha_1\alpha_2\cdots \alpha_{n-1}$, where each $\alpha_i = p_{i,i+1}p_{i+1,i}$ for $i = 1, 2, \dots, n-1$ represents a $2$-cycle in $D$. Let $D_4 = \alpha_l\alpha_{l+1}\alpha_{l+2}$ be the subgraph of $D$, which is the signed directed graph corresponding to $\mathcal{P}_4$. %and $G_6=(v_{l-1},v_l),(v_l,v_{l+1}),(v_{l+1},v_{l+2}),(v_{l+2},v_{l+3}),(v_{l+3},v_{l+4})$ as subgraphs.
	Since the number of maximal signed paths of odd length in $G$ is one, $D_6=\alpha_{l-1}\alpha_{l}\alpha_{l+1}\alpha_{l+2}\alpha_{l+3}$ is also a subgraph of $D$, which is the signed directed graph corresponding to $\mathcal{P}_6$ and
	$$\gamma=\alpha_1\alpha_3\cdots\alpha_{l-3}\alpha_{l+5}\cdots\alpha_{n-1},$$ is a composite cycle in $D\setminus V(D_6)$ of length $n-6$.% consisting only of $2$-cycles.

	From Example \ref{eg2.21}, there exist two real matrices $B_1=[(b_1)_{ij}]$, $B_2=[(b_2)_{ij}]$ in $\mathcal{Q}(\mathcal{P}_6)$ with $\In(B_1)\neq \In(B_2)$ such that both $B_1, B_2$ have all distinct eigenvalues. Let,
	$$M>\max\{1,|\lambda|: \lambda~ \text{is}~ \text{an}~ \text{eigenvalue}~ \text{of}~\text{either} ~B_1~\text{or}~B_2\}$$ and define the real matrices $B_{r,\gamma}(0)=[b_{r,\gamma}(0)_{ij}]$ for $r=1,2$, as
	$$(b_{r,\gamma}(0))_{ij}= \begin{cases}
		M^p & \text{if}\ p_{ij}=+~\text{and}~ \text{is}~ \text{in}~ \alpha_p,~\text{for}~p=1,3,...,l-3,l+5,...,n-1 \\
		-M^p & \text{if}\ p_{ij}=-~\text{and}~ \text{is}~ \text{in}~ \alpha_p,~\text{for}~p=1,3,...,l-3,l+5,...,n-1 \\
		
		(b_{r})_{i+2-l,j+2-l} & \text{if}\ l-1\leq i,j\leq l+4 \\
		
		0 & \text{elsewhere.}
	\end{cases}$$
	Then the eigenvalues of $B_{1,\gamma}(0)$, $B_{2,\gamma}(0)$ are algebraically simple and the eigenvalues of $B_{r,\gamma}(0)$, $r=1,2$ are the eigenvalues of $B_r$ together with the second complex roots of $M^{2p}$ or $-M^{2p}$, depending on the sign of $\alpha_p$, for $p=1,3,...,l-3,l+5,...,n-1$. Note that both $B_{1,\gamma}(0)$, $B_{2,\gamma}(0)$ have all distinct eigenvalues and $\In(B_{1,\gamma}(0))\neq \In(B_{2,\gamma}(0))$. For $\epsilon>0$ define the perturbed matrices $B_{r,\gamma}(\epsilon)=[b_{r,\gamma}(\epsilon)_{ij}]$ for $r=1,2$, as
	\[
	(b_{r,\gamma}(\epsilon))_{ij} = 
	\begin{cases}
		(b_{r,\gamma}(0))_{ij} & \text{if } p_{ij} \neq 0 \text{ and } (b_{r,\gamma}(0))_{ij} \neq 0 \\
		\epsilon & \text{if } p_{ij} = + \text{ and } (b_{r,\gamma}(0))_{ij} = 0 \\
		-\epsilon & \text{if } p_{ij} = - \text{ and } (b_{r,\gamma}(0))_{ij} = 0 \\
		0 & \text{otherwise.}
	\end{cases}
	\]
	Then $B_{1,\gamma}(\epsilon),B_{2,\gamma}({\epsilon})\in \mathcal{Q}(\mathcal{P})$. For $\epsilon>0$ sufficiently small, all the eigenvalues of $B_{1,\gamma}(\epsilon)$ remain close to the eigenvalues of $B_{1,\gamma}(0)$.   The eigenvalues of $B_{1,\gamma}(\epsilon)$ close to the real eigenvalues of $B_{1,\gamma}(0)$ of the form $\lambda$, $-\lambda$ must be real and of the same form, since they cannot merge to form conjugate pairs of nonreal eigenvalues. Similarly, the eigenvalues of $B_{1,\gamma}(\epsilon)$ close to the nonreal eigenvalues of $B_{1,\gamma}(0)$ of the form $i\lambda$, $-i\lambda$ must be nonreal and of the same form. Therefore, for $\epsilon>0$, sufficiently small, $\In(B_{1,\gamma}(0))=\In(B_{1,\gamma}(\epsilon))$.
	
	%Since $B_{1,\gamma}(0)$ has all distinct eigenvalues by repeating the argument as in the proof of Theorem \ref{l3.1n}, it can be shown that $\In(B_{1,\gamma}(0))=\In(B_{1,\gamma}(\epsilon))$ for $\epsilon>0$ is sufficiently small.
	Similarly, since $B_{2,\gamma}(0)$ has all distinct eigenvalues, there exists $\bar{\epsilon}>0$ sufficiently small such that $\In(B_{2,\gamma}(0))=\In(B_{2,\gamma}(\bar{\epsilon}))$. Since $\In(B_{1,\gamma}(0))\neq \In(B_{2,\gamma}(0))$, $\In(B_{1,\gamma}(\epsilon))\neq \In(B_{2, \gamma}(\bar{\epsilon}))$ therefore, $\mathcal{P}$ does not require a unique inertia.
	
	Similarly, if $\mathcal{P}_{4-}$ is a submatrix of $\mathcal{P}$, then it can be shown that $\mathcal{P}$ does not require a unique inertia.
\end{proof}

When the order of $\mathcal{P}$ is greater than or equal to $6$, we specify another pattern which also cannot be a subpattern of a tridiagonal sign pattern requiring a unique inertia.

Let, $$\mathcal{P}_6'=\begin{bmatrix}
	0&+&0&0&0&0\\+&0&-&0&0&0\\0&+&0&-&0&0\\0&0&+&0&-&0\\0&0&0&+&0&+\\0&0&0&0&+&0
\end{bmatrix}, ~~~
\mathcal{P}_8'=\begin{bmatrix}
	0&+&0&0&0&0&0&0\\+&0&+&0&0&0&0&0\\0&+&0&-&0&0&0&0\\0&0&+&0&-&0&0&0\\0&0&0&+&0&-&0&0\\0&0&0&0&+&0&+&0\\0&0&0&0&0&+&0&+\\0&0&0&0&0&0&+&0
\end{bmatrix}.$$
The following examples show that there exists $B_1, B_2\in \mathcal{Q}(\mathcal{P}_6')$ (or, $\mathcal{Q}(\mathcal{P}_8'$)) such that both $B_1,B_2$ have all distinct eigenvalues and $\In(B_1)\neq \In(B_2)$, which implies, $\mathcal{P}_6'$ (or, $\mathcal{P}_8'$) does not require a unique inertia.

\begin{eg} \label{ega21} 
	Take $B\in \mathcal{Q}(\mathcal{P}_6'),$ then $B$ is similar to
	
	$$\begin{bmatrix}
		0&1&0&0&0&0\\a&0&-1&0&0&0\\0&b&0&-1&0&0\\0&0&c&0&-1&0\\0&0&0&d&0&1\\0&0&0&0&e&0
	\end{bmatrix}, ~\text{where}~ a,b,c,d,e>0.$$ \\
	
	\begin{itemize}
		\item[i.] If $B_1\in \mathcal{Q}(\mathcal{P}_6')$ is such that $a=10$, $b=c=d=e=1$, then $\sigma(B_1)=\{-3.0148,  3.0148, 0.7983, -0.7983, 1.3139i, -1, 3139i\}$, so $In(B_1)=(2,2,2)$ and $B_1$ has all distinct eigenvalues. 
		\item[ii.] If $B_2\in \mathcal{Q}(\mathcal{P}_6')$ is such that $a=e=\frac{1}{20}$, $b=d=5$, $c=20$, then $\sigma(B_2)=\{ 5.3970i,-5.3970i, 0.8776i, -0.8776i, 0.0472i, 0.0472i\}$, so $In(B_2)=(0,0,6)$ and $B_2$ has all distinct eigenvalues.
	\end{itemize}
	%Therefore, $\mathcal{P}'$ does not require a unique inertia.
\end{eg}

\begin{eg}\label{ega11}  Take $B\in \mathcal{Q}(\mathcal{P}_8'),$ then $B$ is similar to
	$$\begin{bmatrix}
		0&1&0&0&0&0&0&0\\a&0&1&0&0&0&0&0\\0&b&0&-1&0&0&0&0\\0&0&c&0&-1&0&0&0\\0&0&0&d&0&-1&0&0\\0&0&0&0&e&0&1&0\\0&0&0&0&0&f&0&1\\0&0&0&0&0&0&g&0
	\end{bmatrix},~\text{where}~ a,b,c,d,e,f,g,h>0.$$ 
	
	\begin{itemize}
		\item[i.] If $B_1\in \mathcal{Q}(\mathcal{P}_8')$ is such that $a=b=c=d=e=f=g=1$ then $\sigma(B_1)=\{-1.3096 + 0.0611i, -1.3096 - 0.0611i, 1.3096 + 0.0611i, 1.3096 - 0.0611i, 1.5080i, - 1.5080i,  -0.3858i,0.3858i\}$, so $In(B_1)=(2,2,4)$ and $B_1$ has all distinct eigenvalues.
		%\item[ii.] If $B_2\in \mathcal{Q}(\mathcal{P}_8')$ is such that $a=10$, $b=c=d=e=f=g=1$ then $\sigma(B_2)=\{-3.3049, 3.3049,\\-1.3047,1.3047,-1.5493i, 1.5493i,-0.4734i, 0.4734i\}$, so $In(B_2)=(2,2,4)$ and 
		\item[ii.] If $B_2\in \mathcal{Q}(\mathcal{P}_8')$ is such that $a=g=\frac{1}{20}$, $b=d=f=5$, $c=e=20$ then $\sigma(B_2)=\{ 5.3989i,-5.3989i, 2.2575i, -2,2575i, 0.1022i,-0.1022i,0.8032i,0.8032i\}$, so $In(B_2)=(0,0,8)$ and $B_2$ has all distinct eigenvalues.
	\end{itemize}
	%Therefore, $\mathcal{P}_8'$ does not require a unique inertia.
\end{eg}

\begin{theorem}\label{nth8} \label{nth3.18}
	Let $\mathcal{P}\in \mathcal{Q}_n$ be an irreducible tridiagonal sign pattern with a $0$-diagonal. If $\mathcal{P}_6'$ or ${\mathcal{P}_6'}_-$ is a submatrix of $\mathcal{P}$, then $\mathcal{P}$ does not require a unique inertia.
\end{theorem}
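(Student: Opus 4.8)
The plan is to run exactly the argument used for Theorem~\ref{nth3.4}, with $\mathcal{P}_8'$ playing the role that $\mathcal{P}_6$ played there. First I would dispose of the easy case: since $\mathcal{P}_6'$ is a submatrix, $n\geq 6$, and if the underlying signed undirected graph $G$ of $\mathcal{P}$ has more than one maximal signed path of odd length, then $\mathcal{P}$ fails to require a unique inertia by Theorem~\ref{th2.51}. Hence I may assume $G$ has exactly one maximal signed path of odd length. Because the maximal signed positive and negative paths partition the $n-1$ edges of the path, having exactly one of them of odd length forces $n-1$ to be odd, i.e.\ $n$ even.

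Next I would extract the forced local structure. In the embedded $\mathcal{P}_6'$-block the edge signs are $+,-,-,-,+$, so its three consecutive negative edges form a maximal signed negative path of length $3$ whose two neighbours are positive; thus it is a maximal signed path of odd length in $G$ and, by assumption, the only one. Consequently each of the two flanking positive edges must belong to a maximal signed positive path of even length, so each must be prolonged by a further positive edge. This both shows $n\geq 8$ and exhibits $\mathcal{P}_8'$ as a submatrix of $\mathcal{P}$ occupying seven consecutive edges of the path. For $n=8$ the unique-odd-path hypothesis forces $\mathcal{P}$ to be equivalent to $\mathcal{P}_8'$, so Example~\ref{ega11} gives the conclusion; the residual possibility $n=6$ is covered either by the reduction above or directly by Example~\ref{ega21}, since $\mathcal{P}_6'$ and ${\mathcal{P}_6'}_-$ already carry more than one maximal signed path of odd length.

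For the remaining even orders $n=2k$ with $k\geq5$ I would mimic the construction in the proof of Theorem~\ref{nth3.4}. Write the signed directed graph as $D=\alpha_1\alpha_2\cdots\alpha_{n-1}$, where $\alpha_i=p_{i,i+1}p_{i+1,i}$ is the $2$-cycle on $\{i,i+1\}$, and let $D_8=\alpha_{l-1}\alpha_l\cdots\alpha_{l+5}$ be the window carrying $\mathcal{P}_8'$. Outside this window I would take the composite cycle $\gamma=\alpha_1\alpha_3\cdots\alpha_{l-3}\,\alpha_{l+7}\cdots\alpha_{n-1}$ formed from alternate $2$-cycles, a matching of the $n-8$ vertices of $D\setminus V(D_8)$ (the index parities being arranged exactly as before). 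Choosing $B_1,B_2\in\mathcal{Q}(\mathcal{P}_8')$ from Example~\ref{ega11} with $\In(B_1)\neq\In(B_2)$ and all distinct eigenvalues, and $M>\max\{1,|\lambda|:\lambda\ \text{an eigenvalue of}\ B_1\ \text{or}\ B_2\}$, I would define $B_{r,\gamma}(0)$ to coincide with $B_r$ on the window and to carry the signed weight $\pm M^p$ on each $\alpha_p$ in $\gamma$. Then $\sigma(B_{r,\gamma}(0))$ is $\sigma(B_r)$ together with, for each such $p$, the two square roots of $M^{2p}$ or $-M^{2p}$ according to the sign of $\alpha_p$; since the $M^p$ are large and distinct, both $B_{1,\gamma}(0)$ and $B_{2,\gamma}(0)$ have all distinct eigenvalues and $\In(B_{1,\gamma}(0))\neq\In(B_{2,\gamma}(0))$. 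Filling the remaining nonzero positions of $\mathcal{P}$ with $\pm\epsilon$ as in~\eqref{e2} then produces $B_{r,\gamma}(\epsilon)\in\mathcal{Q}(\mathcal{P})$.

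The crux --- and the step I expect to be the main obstacle --- is to show that the $\epsilon$-perturbation preserves inertia, i.e.\ that for small $\epsilon$ no real eigenvalue of $B_{r,\gamma}(0)$ leaves the real axis and no purely imaginary one acquires a nonzero real part. Here I would invoke Corollary~\ref{c1}: since $\mathcal{P}$ is a path pattern with a $0$-diagonal, the spectrum of every matrix in $\mathcal{Q}(\mathcal{P})$ is symmetric under $\lambda\mapsto-\lambda$, and this together with the conjugate symmetry of a real matrix prevents a simple real pair $\lambda,-\lambda$ from coalescing into a nonreal conjugate pair and a simple purely imaginary pair $i\mu,-i\mu$ from gaining a nonzero real part. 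As the eigenvalues of $B_{r,\gamma}(0)$ are simple, this yields $\In(B_{r,\gamma}(\epsilon))=\In(B_{r,\gamma}(0))$ for $\epsilon$ sufficiently small, whence $\In(B_{1,\gamma}(\epsilon))\neq\In(B_{2,\gamma}(\epsilon))$ and $\mathcal{P}$ does not require a unique inertia. Finally, the case in which ${\mathcal{P}_6'}_-$ is a submatrix follows in the same manner, or by transferring the conclusion through Lemma~\ref{3.1}.
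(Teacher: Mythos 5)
Your proposal is correct and follows essentially the same route as the paper: reduce via Theorem~\ref{th2.51} to the case of exactly one maximal signed path of odd length (forcing $n$ even), dispose of $n=6$ and $n=8$ by Examples~\ref{ega21} and~\ref{ega11}, and for $n\geq 10$ force $\mathcal{P}_8'$ to appear as a submatrix and rerun the $M^p$-weighted matching construction with the $\epsilon$-perturbation from the proof of Theorem~\ref{nth3.4}. In fact you supply more detail than the paper does at the step where the unique-odd-path hypothesis is used to promote the $\mathcal{P}_6'$ window to a $\mathcal{P}_8'$ window, which the paper merely asserts.
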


\begin{proof}
	Suppose that $\mathcal{P}=[p_{ij}]\in \mathcal{Q}_n$ is an irreducible tridiagonal sign pattern with a $0$-diagonal such that $\mathcal{P}_6'$ is a submatrix of $\mathcal{P}$, thus $n\geq 6$. Let $G$ be the signed undirected graph corresponding to $\mathcal{P}$. If the number of maximal signed paths of odd length in $G$ is more than one, then by Theorem \ref{th2.51}, $\mathcal{P}$ does not require a unique inertia, so we assume that the number of maximal odd length paths in $G$ is exactly one, which implies that $n$ is even.
	%$\mathcal{P}$ does not require a unique inertia.
	%So we assume 
	%	If $n$ is odd, then $G$ must contain more than one maximal signed path of odd length, so by Theorem \ref{th2.51}, $\mathcal{P}$ does not require a unique inertia. 
	
	For $n=6$, by Example \ref{ega21}, $\mathcal{P}$ does not require a unique inertia. For $n=8$, since the number of maximal signed paths of odd length in $G$ is one, $\mathcal{P}$ is equivalent to $\mathcal{P}_8'$. By Example \ref{ega11}, $\mathcal{P}$ does not require a unique inertia, and there exists $B_1, B_2\in \mathcal{Q}(\mathcal{P})$ such that $\In(B_1)\neq \In(B_2)$ and both $B_1, B_2$ have all distinct eigenvalues.
	
	For $n=2k$ with $k\geq5$, since the number of maximal signed paths of odd length in $G$ is one, $\mathcal{P}_8'$ is a submatrix of $\mathcal{P}$ and by proceeding as in Theorem \ref{nth3.4}, it can be shown that $\mathcal{P}$ does not require a unique inertia.
	
	Similarly, if ${\mathcal{P}_{6-}'}$ is a submatrix of $\mathcal{P}$, then $\mathcal{P}$ does not require a unique inertia.
\end{proof}

%%%%%%%%%%%%%%%%%%%%%%%%%%%%%%%%%%%%%%%%%%%%%%%%%%%%%%%%%%%%%%%%%%%%%%%%%%%%%%%%%%%%%%%%%%

\section{Combinatorially symmetric sign patterns with underlying graphs containing cycles, which require a unique inertia} \label{s4}

In this section, we focus on irreducible combinatorially symmetric sign patterns, where the underlying undirected graphs contain cycles but no loops. Interesting and easily verifiable necessary conditions are obtained for such patterns to require a unique inertia, depending on the signs of the edges, the distances between cycles, and the distances from leaves to cycles in the underlying graph.

\begin{theorem}\label{nth0.11}
	Let $\mathcal{P}\in \mathcal{Q}_n$, where $n$ is odd, be an irreducible combinatorially symmetric sign pattern with a $0$-diagonal whose underlying signed undirected graph is a cycle. Then the following is true.
	\begin{itemize}
		\item[i.] If $\mathcal{P}$ allows singularity, then $\mathcal{P}$ does not require a unique inertia.
		\item[ii.] If $\mathcal{P}$ is sign nonsingular, then $\mathcal{P}$ requires a unique inertia. 
	\end{itemize}
\end{theorem}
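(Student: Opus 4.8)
The plan is to reduce the whole inertia question to a single scalar condition, namely whether $\det(B)$ can vanish, by first pinning down the exact shape of the characteristic polynomial. Let $D$ be the signed directed graph of $\mathcal{P}$. Because the underlying undirected graph is a single $n$-cycle with a $0$-diagonal, the only simple cycles in $D$ are the $2$-cycles $\alpha_i=p_{i,i+1}p_{i+1,i}$ (one per edge of the cycle) together with the two Hamiltonian cycles $\gamma^+,\gamma^-$ obtained by traversing the $n$-cycle in the two directions. Hence any composite cycle of length $k<n$ can only be a product of pairwise vertex-disjoint $2$-cycles, forcing $k$ to be even; and since $n$ is odd, the only composite cycles of length $n$ are $\gamma^+$ and $\gamma^-$. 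First I would record that this structure forces, for every $B\in\mathcal{Q}(\mathcal{P})$,
$$\Ch_B(x)=x^n+E_2(B)x^{n-2}+E_4(B)x^{n-4}+\cdots+E_{n-1}(B)x-E_n(B),$$
a polynomial in which every non-constant monomial is an \emph{odd} power of $x$ and whose constant term is $(-1)^nE_n(B)=-\det(B)$.

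The key step is then to evaluate $\Ch_B$ on the imaginary axis. For real $\beta$, every non-constant term of $\Ch_B(i\beta)$ is an odd power of $i\beta$ and hence purely imaginary, while the constant term $-\det(B)$ is real; therefore $\Re\,\Ch_B(i\beta)=-\det(B)$ for all $\beta\in\reals$. From this I would deduce that $B$ has an eigenvalue with zero real part (equivalently $i_0(B)>0$) if and only if $\det(B)=0$: if $\det(B)\neq0$ then $\Ch_B(i\beta)\neq0$ for every real $\beta$ and $B$ is nonsingular, so $i_0(B)=0$; while if $\det(B)=0$ then $0$ is an eigenvalue and $i_0(B)\geq1$.

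Both parts then fall out via Theorem~\ref{th4}. For (ii), if $\mathcal{P}$ is sign nonsingular then $\det(B)\neq0$ for every $B\in\mathcal{Q}(\mathcal{P})$, so $i_0(B)=0$ is a fixed number and $\mathcal{P}$ requires a unique inertia. For (i), I would exhibit two matrices with different values of $i_0$. Writing $\det(B)=P^++P^-$ as the sum of the clockwise and counter-clockwise edge products (each entering $\det(B)$ with sign $+$, since the $n$-cycle permutation has sign $(-1)^{n-1}=+1$ for odd $n$), the hypothesis that $\mathcal{P}$ allows singularity forces $\sgn(P^+)=-\sgn(P^-)$. Balancing magnitudes so that $|P^+|=|P^-|$ yields a singular $B_2$ with $i_0(B_2)\geq1$, whereas taking all magnitudes but one equal to $1$ so that $|P^+|\neq|P^-|$ yields a nonsingular $B_1$ with $i_0(B_1)=0$. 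Since $i_0$ is not constant on $\mathcal{Q}(\mathcal{P})$, Theorem~\ref{th4} (or directly Theorem~\ref{lem2}, as $i_0(B_2)+(n-i_0(B_1))\geq 1+n$) shows that $\mathcal{P}$ does not require a unique inertia.

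The genuinely delicate point, and the one I expect to require the most care, is the cycle enumeration that produces the ``odd-powers-plus-constant'' form of $\Ch_B$: one must justify that no odd composite cycle of length strictly between $1$ and $n$ exists and that every length-$n$ composite cycle reduces to one of the two Hamiltonian cycles. Both facts rest essentially on $n$ being odd and on the graph being a single cycle. Once that structure is established, the identity $\Re\,\Ch_B(i\beta)=-\det(B)$ collapses the entire inertia question to the singularity of $B$, and the two cases reduce to routine applications of the characterization in Theorem~\ref{th4}.
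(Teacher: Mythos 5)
Your proposal is correct and follows essentially the same route as the paper: both rest on the observation that for an odd cycle with a zero diagonal the only even-power term of $\Ch_B(x)$ is the constant $(-1)^n E_n(B)=-\det(B)$, so eigenvalues with zero real part exist exactly when $B$ is singular, and part (ii) is then immediate from Theorem~\ref{th4}. The only minor difference is in part (i), where you directly exhibit a singular and a nonsingular matrix and compare $i_0$, while the paper invokes Lemma~\ref{xnl1} on the two oppositely signed Hamiltonian composite cycles (whose proof compares $i_-$ of two nonsingular perturbations); both mechanisms hinge on the same fact that, when singularity is allowed, the two length-$n$ composite cycles must carry opposite signs.
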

\begin{proof}
	\begin{itemize}
		\item[i.] If $\mathcal{P}$ allows singularity, and since $\mathcal{P}$ is not sign singular, then there exist two composite cycles $\Gamma_1$, $\Gamma_2$ of length $n$ in the underlying signed directed graph of $\mathcal{P}$ such that $\sgn(\Gamma_1) = -\sgn(\Gamma_2)$.
		% $A_+,A_- \in \mathcal{Q}(\mathcal{P})$ with $\det(A_+)>0$ and $\det(A_-)<0$. 
		Hence, by Lemma~\ref{xnl1}, $\mathcal{P}$ does not require a unique inertia.
		\item[ii.] If $\mathcal{P}$ is sign nonsingular, then for any $B\in \mathcal{Q}(\mathcal{P})$, the characteristic polynomial of $B$ has the form $$\Ch_B(x)=x^n+E_2(B)x^{n-2}+\cdots+E_{n-1}(B)x+(-1)^nE_n(B),$$ where $E_n(B)\neq 0$. Since the only non-vanishing term of even power of $x$ in $\Ch_B(x)$ is $E_n(B)~(\neq 0)$, $i\lambda$ is not a root of $\Ch_B(x)=0$ for any $\lambda\in\mathbb{R}$, so $B$ has no eigenvalues with zero real part. Hence, by Theorem \ref{th4}, $\mathcal{P}$ requires a unique inertia.
	\end{itemize}
\end{proof}

\begin{eg}\label{xxeg1}
	Suppose that $\mathcal{P}\in \mathcal{Q}_3$ is an irreducible combinatorially symmetric sign pattern with a $0$-diagonal, whose underlying signed undirected graph $G$ is given in Fig.~\ref{xx1}.
	\begin{figure}[H]
		\centering
		\begin{minipage}{.45\textwidth}
			\vspace{-1cm}
			\[
			\mathcal{P}=
			\begin{bmatrix}
				0 & + & + \\
				+ & 0 & + \\
				- & + & 0
			\end{bmatrix}
			\]
		\end{minipage}
		\hspace{0.05\textwidth}
		\begin{minipage}{.45\textwidth}
			\centering
			\tikzset{node distance=1cm}
			\begin{tikzpicture}[
				vertex/.style={draw, circle, inner sep=0pt, minimum size=.15cm, fill=black},
				edge/.style={thick}
				]
				% Vertices
				\node[vertex,label=left:3]  (v1) at(-1,0) {};
				\node[vertex,label=right:2] (v2) at( 1,0) {};
				\node[vertex,label=above:1] (v3) at( 0,1) {};
				
				% Edges
				\draw[edge] (v1)--node[below]{$+$}(v2);
				\draw[edge] (v2)--node[right]{$+$}(v3);
				\draw[edge] (v3)--node[left] {$-$}(v1);
			\end{tikzpicture}
			\caption{ $G$.}
			\label{xx1}
		\end{minipage}
	\end{figure}
	
	Then $\Gamma_1 = p_{12}p_{23}p_{31}$ and $\Gamma_2 = p_{13}p_{32}p_{21}$ are two oppositely signed cycles of length $3$ in the signed directed graph of $\mathcal{P}$. Hence, $\mathcal{P}$ allows singularity. Therefore, $\mathcal{P}$ satisfies the conditions stated in (i) of Theorem~\ref{nth0.11}, which implies that $\mathcal{P}$  does not require a unique inertia.
\end{eg}
\begin{eg}
	Consider an irreducible combinatorially symmetric sign pattern $\mathcal{P}\in \mathcal{Q}_3$ with a $0$-diagonal whose underlying signed undirected graph $G$ is given in Fig.~\ref{xx2}.
	\begin{figure}[H]
		\centering
		\begin{minipage}{.45\textwidth}
			\vspace{-1cm}
			\[
			\mathcal{P}=
			\begin{bmatrix}
				0 & + & + \\
				+ & 0 & + \\
				+ & + & 0
			\end{bmatrix}
			\]
		\end{minipage}
		\hspace{0.05\textwidth}
		\begin{minipage}{.45\textwidth}
			\centering
			\tikzset{node distance=1cm}
			\begin{tikzpicture}[
				vertex/.style={draw, circle, inner sep=0pt, minimum size=.15cm, fill=black},
				edge/.style={thick}
				]
				% Vertices
				\node[vertex,label=left:3]  (v1) at(-1,0) {};
				\node[vertex,label=right:2] (v2) at( 1,0) {};
				\node[vertex,label=above:1] (v3) at( 0,1) {};
				
				% Edges
				\draw[edge] (v1)--node[below]{$+$}(v2);
				\draw[edge] (v2)--node[right]{$+$}(v3);
				\draw[edge] (v3)--node[left] {$+$}(v1);
			\end{tikzpicture}
			\caption{$G$}
			\label{xx2}
		\end{minipage}
	\end{figure}
	
	Then $\Gamma_1 = p_{12}p_{23}p_{31}$ and $\Gamma_2 = p_{13}p_{32}p_{21}$ are the only composite cycles of length $3$ in the signed directed graph of $\mathcal{P}$, and they have the same sign. Hence, $\mathcal{P}$ is sign nonsingular. Therefore, $\mathcal{P}$ satisfies the conditions stated in (ii) of Theorem~\ref{nth0.11}, which implies that $\mathcal{P}$ requires a unique inertia.
\end{eg}

\begin{lemma}\label{xnl2}
	Let $\mathcal{P} \in \mathcal{Q}_n$ be a sign pattern, whose underlying signed directed graph $D$ contain only cycles of even length. Suppose that the maximum length of the composite cycles in $D$ is $m$, $m \geq 2$. If $D$ contains two composite cycles, $\Gamma_1$, $\Gamma_2$, consisting only of negative $2$-cycles and positive $2$-cycles, respectively, and if $l(\Gamma_1)+l(\Gamma_2)\geq m+2$, then $\mathcal{P}$ does not require a unique inertia.
\end{lemma}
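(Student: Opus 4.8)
The plan is to produce two matrices $B_1,B_2\in\mathcal{Q}(\mathcal{P})$ with $i_0(B_1)\neq i_0(B_2)$ and then invoke Theorem~\ref{th4} (equivalently Theorem~\ref{lem2}), which reduces everything to counting eigenvalues with zero real part. The engine of the argument is a substitution. Since $D$ contains only even cycles, every composite cycle has even length, so $E_k(B)=0$ for every odd $k$ and every $B\in\mathcal{Q}(\mathcal{P})$; hence $\Ch_B(x)=\sum_{j\ge 0}E_{2j}(B)\,x^{n-2j}$ is a polynomial in $x^2$ (times $x$ when $n$ is odd). Writing $\Ch_B(x)=x^{\delta}q_B(x^2)$ with $\delta\in\{0,1\}$, each eigenvalue $\lambda$ of $B$ is either $0$ or satisfies $\lambda^2=y$ for some root $y$ of $q_B$, and a pair $\pm\sqrt{y}$ has nonzero real part precisely when $y\notin\mathbb{R}_{\le 0}$. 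Consequently the number of eigenvalues of $B$ with nonzero real part equals twice the number of roots of $q_B$ lying off the non-positive real axis, and I will use this to bound $i_0$ from above for $B_1$ and from below for $B_2$.

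First I would build $B_1$ by emphasizing $\Gamma_1$. A negative $2$-cycle on $\{u,v\}$ forces $p_{uv}p_{vu}=+$, so the corresponding block $\left[\begin{smallmatrix} 0 & b_{uv} \\ b_{vu} & 0\end{smallmatrix}\right]$ has the two real nonzero eigenvalues $\pm\sqrt{b_{uv}b_{vu}}$, i.e.\ positive roots of $q$. Choosing the magnitudes on the $s=l(\Gamma_1)/2$ blocks of $\Gamma_1$ to be distinct (in the spirit of the proof of Theorem~\ref{nth3.4}) and filling the remaining nonzero positions of $\mathcal{P}$ with $\pm\epsilon$ as in \eqref{e2}, I obtain $B_1\in\mathcal{Q}(\mathcal{P})$ whose $q_{B_1}$ has $s$ simple positive roots. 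For $\epsilon$ small these persist as positive roots, so $B_1$ has at least $l(\Gamma_1)$ eigenvalues with nonzero real part, whence $i_0(B_1)\le n-l(\Gamma_1)$.

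Dually, a positive $2$-cycle forces $p_{uv}p_{vu}=-$, so the block $\left[\begin{smallmatrix} 0 & b_{uv} \\ b_{vu} & 0\end{smallmatrix}\right]$ now has purely imaginary eigenvalues $\pm i\sqrt{|b_{uv}b_{vu}|}$, i.e.\ negative roots of $q$. Emphasizing $\Gamma_2$ with distinct magnitudes and perturbing yields $B_2\in\mathcal{Q}(\mathcal{P})$ whose $q_{B_2}$ has $t=l(\Gamma_2)/2$ simple negative roots. The decisive bookkeeping is that, because the maximal composite cycle length is $m$, one has $E_{2j}(B)=0$ for $2j>m$, so $q_{B_2}(y)=y^{(n-m)/2}\,\tilde r(y)$ has at most $m/2$ nonzero roots; at least $t$ of these are (simple, and bounded away from $0$, hence persistently) negative, leaving at most $m/2-t$ off the non-positive real axis. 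Thus $B_2$ has at most $m-l(\Gamma_2)$ eigenvalues with nonzero real part, so $i_0(B_2)\ge n-m+l(\Gamma_2)$. Combining, $i_0(B_2)-i_0(B_1)\ge l(\Gamma_1)+l(\Gamma_2)-m\ge 2>0$, and Theorem~\ref{th4} completes the argument.

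I expect the main obstacle to be the persistence argument for $B_2$: a purely imaginary simple eigenvalue may in principle drift off the imaginary axis, and a negative root of $q$ of higher multiplicity could split into a conjugate pair that leaves the negative real axis, which would then be counted toward the nonzero-real-part total and break the upper bound. This is exactly why I insist that the $t$ negative roots be \emph{simple}, since a simple real root of a real polynomial remains real and keeps its sign under small coefficient perturbations. By contrast the bound for $B_1$ is forgiving: roots straying off the positive real axis still count as having nonzero real part and only strengthen that inequality. The delicate direction is therefore the ceiling on the nonzero-real-part count of $B_2$, which rests entirely on the cycle-length bound $m$ controlling the number of nonzero roots of $q_{B_2}$.
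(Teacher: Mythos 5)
Your argument is correct and is essentially the paper's own proof in different bookkeeping: the paper likewise emphasizes each of $\Gamma_1,\Gamma_2$ with distinct magnitudes $10^p$ on the $2$-cycles, perturbs the remaining entries by $\pm\epsilon$, uses the evenness of $\Ch_B(x)$ (its Lemma~\ref{lem1}, which is your ``simple real root of $q_B$ keeps its sign'' step in the variable $y=x^2$) to pin the relevant eigenvalues to the imaginary or real axis, uses the vanishing of $E_k$ for $k>m$ to supply the $n-m$ zero eigenvalues, and concludes via Theorem~\ref{lem2} from $i_0(B_1)+(n-i_0(B_2))\geq n+2$. The only discrepancy is which of $\Gamma_1,\Gamma_2$ yields the purely imaginary block eigenvalues: you follow the paper's formal definition $\sgn(\gamma)=(-1)^{k-1}p_{uv}p_{vu}$, under which a negative $2$-cycle has $p_{uv}p_{vu}=+$ and real block eigenvalues, whereas the paper's proof (and its later applications, where negative \emph{edges} correspond to negative $2$-cycles) tacitly uses $\sgn(\gamma)=p_{uv}p_{vu}$ and assigns the imaginary eigenvalues to $\Gamma_1$; since the lemma's conclusion is symmetric in $\Gamma_1$ and $\Gamma_2$, this swap does not affect the validity of your proof.
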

\begin{proof}
	Let $\Gamma_1 = \alpha_{i_1} \alpha_{i_2} \cdots \alpha_{i_{k_1}}$ and $\Gamma_2 = \alpha_{j_1} \alpha_{j_2} \cdots \alpha_{j_{k_2}}$ be two composite cycles in $D$ such that $\Gamma_1$ consists only of negative $2$-cycles and $\Gamma_2$ consists only of positive $2$-cycles, with $l(\Gamma_1)+l(\Gamma_2)=2k_1+2k_2\geq m+2$, where each $\alpha_t$ denotes a $2$-cycle.   Define the real matrix $B_{\Gamma_1}(0)=[b_{\Gamma_1}(0)_{ij}]$ as
	$$b_{\Gamma_1}(0)_{ij}= \begin{cases}
		10^p & \text{if}\ p_{ij}=+~\text{and}~ \text{is}~
		\text{in}~ \alpha_p,~\text{for}~p=i_1,i_2,...,i_{k_1} \\
		-10^p & \text{if}\ p_{ij}=-~\text{and}~ \text{is}~ \text{in}~ \alpha_p,~\text{for}~p=i_1,i_2,...,i_{k_1} \\
		
		0 & \text{elsewhere.}
	\end{cases}$$
	Then the eigenvalues of $B_{\Gamma_1}(0)$ are the second complex roots of $-10^{2p}$ for $p=i_1,i_2,...,i_{k_1}$. Therefore, %$B_{\gamma_1}(0)$ as in Theorem \ref{nth1}, then 
	$B_{\Gamma_1}(0)$ has $2k_1$ algebraically simple nonzero purely imaginary eigenvalues. For $\epsilon>0$, sufficiently small define $B_{\Gamma_1}(\epsilon)$ as in \eqref{e2} with $\gamma$ replaced by $\Gamma_1$, then $B_{\Gamma_1}(\epsilon)\in \mathcal{Q}(\mathcal{P})$ and its eigenvalues remain close to the eigenvalues of $B_{\Gamma_1}(0)$. Also, $D$ contains only cycles of even length, so the characteristic polynomial $\Ch_{B_{\Gamma_1}(\epsilon)}(x)$ consists only of even powers of $x$, so by Lemma \ref{lem1}, $i_0(B_{\Gamma_1}(\epsilon))\geq (n-m)+2k_1$.
	
	Similarly, define the real matrix $B_{\Gamma_2}(0)$, then it has $2k_2$ algebraically simple, nonzero real eigenvalues. For $\epsilon'>0$, sufficiently small define $B_{\Gamma_2}(\epsilon')$ as in \eqref{e2} with $\gamma$ replaced by $\Gamma_2$, then $B_{\Gamma_2}(\epsilon')\in \mathcal{Q}(\mathcal{P})$ and the eigenvalues are remain close to the eigenvalues of $B_{\Gamma_2}(0)$. So, $n-i_0(B_{\Gamma_2}(\epsilon'))\geq 2k_2$, and $$i_0(B_{\Gamma_1}(\epsilon))+(n-i_0(B_{\Gamma_2}(\epsilon')))\geq (n-m)+2k_1+2k_2\geq n+2.$$ Hence, by Theorem \ref{lem2}, $\mathcal{P}$ does not require a unique inertia.
\end{proof}

\begin{theorem} \label{xxth3}
	Let $\mathcal{P} \in \mathcal{Q}_n$ be an irreducible combinatorially symmetric sign pattern with a $0$-diagonal, whose underlying signed undirected graph $G$ is a cycle. Then $\mathcal{P}$ does not require a unique inertia if any one of the following holds:
	\begin{itemize}
		\item[i.] $G$ has an odd number of negative edges. 
		\item[ii.] Every edge of $G$ is negative.
		\item[iii.] $n$ is even and $G$ has a maximal signed path of odd length.
	\end{itemize}
\end{theorem}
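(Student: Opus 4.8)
The plan is to set up the signed directed graph $D$ of $\mathcal{P}$ and read off its cycles. Since $G$ is an $n$-cycle, $D$ consists of $n$ two-cycles $\alpha_1,\dots,\alpha_n$ (one per edge of $G$, where a negative edge corresponds to a $2$-cycle whose $2\times 2$ block has purely imaginary eigenvalues and a positive edge to one with real eigenvalues) together with the two oriented Hamiltonian cycles $\Gamma^+$ and $\Gamma^-$ traversing $G$ in the two directions. A composite cycle built from $2$-cycles corresponds to a matching of $G$, so the maximum composite cycle length is $m=n$ in every case: the length-$n$ composite cycles are $\Gamma^+,\Gamma^-$ always, and, when $n$ is even, additionally the two composite cycles coming from the two perfect matchings of the cycle. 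I would treat (i), (ii), (iii) with three different tools, namely Lemma~\ref{xnl1}, an explicit skew-symmetric construction together with Theorem~\ref{th4}, and Lemma~\ref{xnl2}.

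For (i) I would compute $\sgn(\Gamma^+)\sgn(\Gamma^-)=\prod_{\{i,j\}\in E(G)}(p_{ij}p_{ji})=(-1)^{t}$, where $t$ is the number of negative edges: the two $(-1)^{n-1}$ factors multiply to $1$, and each undirected edge contributes $p_{ij}p_{ji}$, which is $+$ for a positive edge and $-$ for a negative edge. When $t$ is odd, these two composite cycles of length $m=n$ carry opposite signs, so Lemma~\ref{xnl1} immediately gives that $\mathcal{P}$ does not require a unique inertia.

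For (ii), if $n$ is odd then $t=n$ is odd and the conclusion follows from (i). If $n$ is even I would exhibit two matrices of different $i_0$ and invoke Theorem~\ref{th4}. First, because every edge is negative, the class $\mathcal{Q}(\mathcal{P})$ contains a real skew-symmetric matrix $S$ (put $s_{ji}=-s_{ij}$ with $\sgn(s_{ij})=p_{ij}$, which is consistent precisely because each edge is negative); all eigenvalues of $S$ are purely imaginary or zero, so $i_0(S)=n$. Second, the matrix $B_{\Gamma^+}(\epsilon)$ built from \eqref{e2} about the Hamiltonian cycle $\Gamma^+$ has, at $\epsilon=0$, the $n$ distinct $n$-th roots of $+1$ or of $-1$ as eigenvalues; at most two of these lie on the imaginary axis, so for small $\epsilon>0$ at least $n-2\ge 2$ eigenvalues retain nonzero real part and $i_0(B_{\Gamma^+}(\epsilon))\le 2<n$. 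Hence $i_0$ is not constant on $\mathcal{Q}(\mathcal{P})$, and Theorem~\ref{th4} finishes this case.

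For (iii), since $n$ is even every cycle of $D$ has even length, so I would apply Lemma~\ref{xnl2} with $\Gamma_1$ a maximum matching using only negative edges and $\Gamma_2$ a maximum matching using only positive edges. Writing $\ell_1,\dots,\ell_{2r}$ for the lengths of the maximal monochromatic arcs of $G$, a path with $\ell$ edges admits a matching of size $\lceil \ell/2\rceil$, and distinct arcs of the same color are vertex-disjoint, so $l(\Gamma_1)+l(\Gamma_2)=2\sum_i\lceil \ell_i/2\rceil = n + \#\{i:\ell_i\text{ odd}\}$. The crux, which I expect to be the main obstacle, is a parity observation: since $\sum_i\ell_i=n$ is even, the number of odd arcs is even, and the hypothesis that $G$ has a maximal signed path of odd length forces it to be nonzero, hence at least $2$. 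Therefore $l(\Gamma_1)+l(\Gamma_2)\ge n+2=m+2$, and Lemma~\ref{xnl2} applies. I would also record that the hypothesis forces both colors to occur (so both $\Gamma_1,\Gamma_2$ are nonempty) and that alternation of colors around the cycle is what makes the number of arcs even. The only remaining care is bookkeeping: matching the ``positive/negative $2$-cycle'' terminology of Lemma~\ref{xnl2} to positive/negative edges through the real-versus-imaginary eigenvalue behavior of the $2\times 2$ blocks.
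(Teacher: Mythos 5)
Your proposal is correct and follows essentially the same route as the paper: part (i) via the two oppositely oriented Hamiltonian cycles of $D$ having opposite signs and Lemma~\ref{xnl1}, part (ii) via a skew-symmetric realization (inertia $(0,0,n)$) compared against the perturbed Hamiltonian cycle, and part (iii) via two vertex-disjoint monochromatic matchings with $l(\Gamma_1)+l(\Gamma_2)\ge n+2$ fed into Lemma~\ref{xnl2}. The only cosmetic difference is in (iii), where the paper extracts the two matchings from an explicit closed edge set that repeats exactly the two vertices $1$ and $2p$ (giving $l(\Gamma_1)+l(\Gamma_2)=n+2$ exactly), whereas your arc-length/parity count reaches the same bound; your closing remark about reconciling the positive/negative $2$-cycle terminology of Lemma~\ref{xnl2} with edge signs through the real-versus-imaginary eigenvalue behaviour of the $2\times 2$ blocks is indeed the correct reading.
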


\begin{proof}
	Let $D$ be the underlying signed directed graph of $\mathcal{P}$.
	\begin{itemize}
		\item[i.]  If $G$ contains an odd number of negative edges, then
		$$p_{12}p_{23}\cdots p_{n-1,n} p_{n1}=-p_{1n}p_{n,n-1}\cdots p_{32} p_{21},$$ where $p_{ij}$ is the $(i,j)$-th entry of $\mathcal{P}$.
		Let $\Gamma_1 = p_{12}p_{23}\cdots p_{n-1,n} p_{n1}$, $\Gamma_2 = p_{1n}p_{n,n-1}\cdots p_{32} p_{21}$.
		Then $\Gamma_1$, $\Gamma_2$ are two oppositely signed cycles in $D$ of length $n$, so by Lemma~\ref{xnl1}, $\mathcal{P}$ does not require a unique inertia.
		
		\item[ii.] If $n$ is odd and all edges are negatively signed, then by part~(i), $\mathcal{P}$ does not require a unique inertia. Therefore, suppose that $n$ is even. Let 
		$\Gamma_1=(1,2)(3,4)\cdots (n-1,n)$, $\Gamma_2=(1,2,...,n)$ be two composite cycles in $D$, each of length $n$.  
		Define a real matrix $B_{\Gamma_1}(\epsilon) = [b_{\Gamma_1}(\epsilon)_{ij}] \in \mathcal{Q}(\mathcal{P})$ by
		\begin{equation*}
			|b_{\Gamma_1}(\epsilon)_{ij}|= \begin{cases}
				10^p & \text{if}\ p_{ij}\neq 0 ~\text{and~is~in}~ (p,p+1),~\text{for}~p=1,3,...,n-1 \\
				\epsilon & \text{if}\ p_{ij}\neq 0~\text{and~not~in}~ \Gamma_1 \\
				
				0 & \text{elsewhere,}
			\end{cases}
		\end{equation*}
		for some $\epsilon>0$. Then $B_{\Gamma_1}(\epsilon)$ is a real skew-symmetric matrix, and hence $\In(B_{\Gamma_1}(\epsilon))=(0,0,n)$. Also, define two real matrices $B_{\Gamma_2}(0)$ and $B_{\Gamma_2}(\epsilon)$ as in \eqref{e1} and \eqref{e2}, respectively, with $\gamma$ replaced by $\Gamma_2$. Since $B_{\Gamma_2}(0)$ has $n$ nonzero algebraically simple eigenvalues, which are the $n$-th complex roots of $+1$ or $-1$ $(n\geq 3)$, it follows that for $\epsilon > 0$ sufficiently small, $B_{\Gamma_2}(\epsilon)\in \mathcal{Q}(\mathcal{P})$ and it has $n$ nonzero algebraically simple eigenvalues close to those of $B_{\Gamma_2}(0)$. Therefore, $\In(B_{\Gamma_2}(\epsilon))\neq (0,0,n)$, and $\mathcal{P}$ does not require a unique inertia. 
		
		\item[iii.] Suppose that $n$ is even and $G$ has a maximal signed path of odd length. Without loss of generality, let $P=\{1,2\}\{2,3\}\cdots\{2p-1,2p\}$ be the maximal signed path in $G$ of odd length. Since $n$ is even, the path from $2p$ to $1$ is of odd length.
		Consider,
		$$\Gamma=\{\{2j-1,2j\}: 1\leq j \leq p\}~\cup~\{\{2i,2i+1\}:2p\leq 2i\leq (n-2)\}~\cup~\{\{n,1\}\}. $$ 
		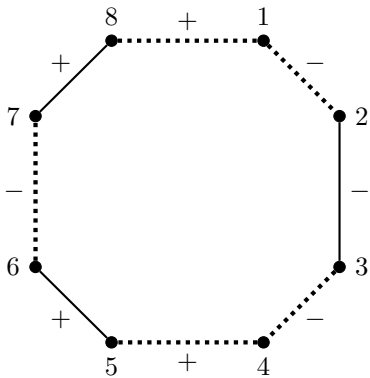
\begin{figure}[H]
			\tikzset{node distance=1cm}
			\centering
			\begin{tikzpicture}
				\tikzstyle{vertex}=[draw, circle, inner sep=0pt, minimum size=.15cm, fill=black]
				\tikzstyle{edge}=[,thick]
				\node[vertex,label=above:1](v1)at(1,2){};
				\node[vertex,label=right:2](v2)at(2,1){};
				\node[vertex,label=right:3](v3)at(2,-1){};
				\node[vertex,label=below:4](v4)at(1,-2){};
				\node[vertex,label=below:5](v5)at(-1,-2){};
				\node[vertex,label=left:6](v6)at(-2,-1){};
				\node[vertex,label=left:7](v7)at(-2,1){};
				\node[vertex,label=above:8](v8)at(-1,2){};
				%\node[vertex,label=center:1](v9)at(12,0){};
				\draw[edge, ultra thick, dotted](v1)--node[right, yshift=0.2cm, xshift=-0.1cm]{$-$}(v2);
				\draw[edge](v2)--node[right]{$- $}(v3);
				\draw[edge, ultra thick, dotted](v3)--node[right, yshift=-0.2cm, xshift=-0.1cm]{$-$}(v4);
				\draw[edge, ultra thick, dotted](v4)--node[below]{$+$}(v5);
				\draw[edge](v5)--node[left, yshift=-0.2cm, xshift=0.1cm]{$+$}(v6);
				\draw[edge, ultra thick, dotted](v6)--node[left]{$-$}(v7);
				\draw[edge](v7)--node[left, yshift=0.2cm, xshift=0.1cm]{$ +$}(v8);
				\draw[edge, ultra thick, dotted](v8)--node[above]{$+$}(v1);
			\end{tikzpicture}\caption{ In the above figure $G$ is a cycle of length $8$ and $P=\{1,2\}\{2,3\}\{3,4\}$, where the dotted edges correspond to $\Gamma$.} \end{figure}
		Then the edges $\{2p-1,2p\}$, $\{2p,2p+1\}$; $\{n,1\}$, $\{1,2\}$ are the only pair of edges in $\Gamma$ which are adjacent, and the edges in each pair are signed oppositely. Hence, the positive edges in $\Gamma$, the negative edges in $\Gamma$ are all non-adjacent. Let $k_1$ (respectively, $k_2$) be the number of negative (respectively, positive) edges in $\Gamma$. Since every vertex of $G$ occurs once in $\Gamma$ except $2p$, $1$, which occurs twice, hence $2k_1+2k_2=n+2$. 
		Let $\mathcal{M}_1=\{\alpha_{i_1},\alpha_{i_2},..., \alpha_{i_{k_1}}\}$ and $\mathcal{M}_2=\{\alpha_{j_1},\alpha_{j_2},..., \alpha_{j_{k_2}}\}$ be two matchings in $G$ containing only negative and positive edges in $\Gamma$, respectively, where $\alpha_t=\{t,t+1\}$ for $t=1,2,...,n-1$, and $\alpha_n=\{n,1\}$. 
		
		Let $\Gamma_1$, $\Gamma_2$ be the composite cycles in $D$ corresponding to $\mathcal{M}_1$, $\mathcal{M}_2$, respectively. Then, $\Gamma_1$ (respectively, $\Gamma_2$) consisting only of negative (respectively, positive) $2$-cycles of $D$, and since $l(\Gamma_1) + l(\Gamma_2) = n + 2$, it follows from Lemma~\ref{xnl2} that $\mathcal{P}$ does not require a unique inertia. 
	\end{itemize} 
\end{proof}

%\begin{eg}
%    Let $\mathcal{P}\in \mathcal{Q}_3$ be an irreducible combinatorially symmetric sign pattern with a $0$-diagonal, whose underlying undirected graph $G$ is given in Example~\ref{xxeg1}.
% Then $G$ is a cycle of length $3$, which contains exactly one negative edge. Thus, $\mathcal{P}$ satisfies the conditions stated in Theorem~\ref{xxth3}(i), and, consequently, $\mathcal{P}$ does not require a unique inertia.
%\end{eg}

The following example shows that if $\mathcal{P}$ does not satisfy any of the three conditions of the previous theorem, then we cannot conclude either way, that is, in that case, $\mathcal{P}$ may or may not require a unique inertia.

\begin{eg}\label{eg0.6}
	Suppose that $\mathcal{P}\in \mathcal{Q}_4$ is an irreducible combinatorially symmetric sign pattern with a $0$-diagonal, whose underlying signed undirected graph $G$ is given in Fig.\ref{fign1}.
	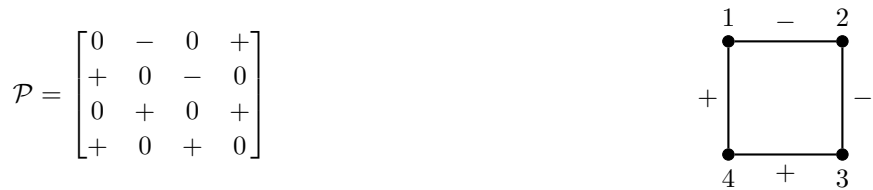
\begin{figure}[H]
		\centering
		\begin{minipage}{.45\textwidth}
			\vspace{-1cm}
			\[
			\mathcal{P}=
			\begin{bmatrix}
				0 & - & 0 & + \\
				+ & 0 & - & 0 \\
				0 & + & 0 & + \\
				+ & 0 & + & 0
			\end{bmatrix}
			\]
		\end{minipage}
		\hspace{0.05\textwidth}
		\begin{minipage}{.45\textwidth}
			\centering
			\tikzset{node distance=1cm}
			\begin{tikzpicture}[
				vertex/.style={draw, circle, inner sep=0pt, minimum size=.15cm, fill=black},
				edge/.style={thick}
				]
				% Nodes
				\node[vertex,label=above:1] (v1) at(-.75,.75) {};
				\node[vertex,label=above:2] (v2) at( .75,.75) {};
				\node[vertex,label=below:3] (v3) at( .75,-.75) {};
				\node[vertex,label=below:4] (v4) at(-.75,-.75) {};
				
				% Edges
				\draw[edge] (v1)--node[above] {$-$}(v2);
				\draw[edge] (v2)--node[right] {$-$}(v3);
				\draw[edge] (v3)--node[below] {$+$}(v4);
				\draw[edge] (v1)--node[left]  {$+$}(v4);
			\end{tikzpicture}
			\caption{Signed undirected graph $ G$ of $\mathcal{P}$.}
			\label{fign1}
		\end{minipage}
	\end{figure}
	
	Then $G$ is a cycle with exactly two negative edges and no maximal signed path of odd length. Therefore, $\mathcal{P}$ does not satisfy any of the three conditions
	of Theorem~\ref{xxth3}. Suppose that
	$$B=\begin{bmatrix}
		0&-a&0&b\\c&0&-d&0\\0&e&0&f\\g&0&h&0
	\end{bmatrix}\in \mathcal{Q}(\mathcal{P}),~\text{where}~a,b,c,d,e,f,g,h>0.$$
	Then the characteristic polynomial of $B$ is $\Ch_B(x)=x^4+(ac-bg+ed-fh)x^2-(acfh+adfg+ebch+ebdg)$. Take $f(x)=x^2+(ac-bg+ed-fh)x-(acfh+adfg+ebch+ebdg)$, then
	$$ 
	\sgn(f(x)) =  \begin{cases} 
		(+)-(*)(+)-(+) & \text{if} ~ x> 0 \\
		(+)-(*)(-)-(+) & \text{if} ~x<0,
	\end{cases} 
	$$ where $*$ denotes an arbitrary sign. Since $V_+(x)=1$ and $V_-(x)=1$, according to Descartes’ rule of signs, the number of positive and negative real roots of $f(x)$ is exactly one. Also, $\Ch_{B}(x)$ consists only of even powers of $x$, so by Lemma \ref{lem1}, $\In(B)=(1,1,2)$ for all $B\in \mathcal{Q}(\mathcal{P})$ and $\mathcal{P}$ requires a unique inertia.
\end{eg}

\begin{eg}\label{egxx0.6} Suppose that $\mathcal{P}\in \mathcal{Q}_4$ is an irreducible combinatorially symmetric sign pattern with a $0$-diagonal, whose underlying signed undirected graph is the graph $G$ given in Fig~\ref{xfig3.37}.
	\begin{figure}[H]
		\centering
		\begin{minipage}{.45\textwidth}
			\vspace{-1cm}
			\[
			\mathcal{P}=
			\begin{bmatrix}
				0 & + & 0 & + \\
				+ & 0 & + & 0 \\
				0 & + & 0 & + \\
				+ & 0 & + & 0
			\end{bmatrix}
			\]
		\end{minipage}
		\hspace{0.05\textwidth}
		\begin{minipage}{.45\textwidth}
			\centering
			\tikzset{node distance=1cm}
			\begin{tikzpicture}[
				vertex/.style={draw, circle, inner sep=0pt, minimum size=.15cm, fill=black},
				edge/.style={thick}
				]
				% Nodes
				\node[vertex,label=above:1] (v1) at(-.75,.75) {};
				\node[vertex,label=above:2] (v2) at( .75,.75) {};
				\node[vertex,label=below:3] (v3) at( .75,-.75) {};
				\node[vertex,label=below:4] (v4) at(-.75,-.75) {};
				
				% Edges
				\draw[edge] (v1)--node[above] {$+$}(v2);
				\draw[edge] (v2)--node[right] {$+$}(v3);
				\draw[edge] (v3)--node[below] {$+$}(v4);
				\draw[edge] (v1)--node[left]  {$+$}(v4);
			\end{tikzpicture}
			\caption{Signed undirected graph $ G$ of $\mathcal{P}$.}
			\label{xfig3.37}
		\end{minipage}
	\end{figure}
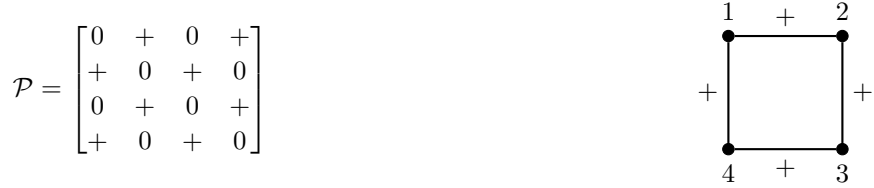
	
	Then $G$ is a cycle with no negative edges and no maximal signed path of odd length. Therefore, $\mathcal{P}$ does not satisfy any of the three conditions
	of Theorem~\ref{xxth3}. However, consider	$$\begin{bmatrix}
		0&a&0&b\\ c&0&d&0\\ 0&e&0&f\\ g&0&h&0
	\end{bmatrix}\in \mathcal{Q}(\mathcal{P}), ~\text{where}~ a,b,c,d,e,f,g,h>0.$$ \begin{itemize}
		\item[i.] Let $B_1 \in \mathcal{Q}(\mathcal{P})$ with $a = b = c = d = e = f = g = h = 1$. Then $\sigma(B_1) = \{0, 0, 2, -2\}$, and $\In(B_1) = (1, 1, 2)$.
		\item[ii.] Let $B_2 \in \mathcal{Q}(\mathcal{P})$ with $a = c = f = h = 2$ and $b=d=e=g = 1$. Then $\sigma(B_2) = \{3, 1, -1, -3\}$, and $\In(B_2) = (2, 2, 0)$.
	\end{itemize}
	
	Therefore, $\mathcal{P}$ does not require a unique inertia.
\end{eg}
%%%%%%%%%%%%%%%
The following example shows that if in condition (iii) of the previous theorem, $\mathcal{P}$ is of odd order, then it may require a unique inertia even if it contains a maximal signed path of odd length.

\begin{eg}\label{xeg1} Suppose that $\mathcal{P}\in \mathcal{Q}_3$ is an irreducible combinatorially symmetric sign patterns with a $0$-diagonal, whose underlying signed undirected graph is $G$ given in Fig.\ref{xfign1.9}.
	\begin{figure}[H]
		\centering
		\begin{minipage}{.45\textwidth}
			\vspace{-1cm}
			\[
			\mathcal{P}=
			\begin{bmatrix}
				0 & - & + \\
				+ & 0 & - \\
				+ & + & 0
			\end{bmatrix}
			\]
		\end{minipage}
		\hspace{0.05\textwidth}
		\begin{minipage}{.45\textwidth}
			\centering
			\tikzset{node distance=1cm}
			\begin{tikzpicture}[
				vertex/.style={draw, circle, inner sep=0pt, minimum size=.15cm, fill=black},
				edge/.style={thick}
				]
				% Nodes
				\node[vertex,label=left:1]  (v1) at(-1,0) {};
				\node[vertex,label=right:2] (v2) at( 1,0) {};
				\node[vertex,label=above:3] (v3) at( 0,1) {};
				
				% Edges
				\draw[edge] (v1)--node[below] {$-$}(v2);
				\draw[edge] (v2)--node[right] {$-$}(v3);
				\draw[edge] (v3)--node[left]  {$+$}(v1);
			\end{tikzpicture}
			\caption{Signed undirected graph $ G$ of $\mathcal{P}$.}
			\label{xfign1.9}
		\end{minipage}
	\end{figure}
	
	Then $G$ is a cycle containing a maximal signed path of odd length. However, $\mathcal{P}$ is a sign nonsingular sign pattern of odd order, by Theorem \ref{nth0.11}, $\mathcal{P}$ requires a unique inertia.
\end{eg}
%%%%%%%%%%%%%%%%%%%

The following are examples of sign patterns that satisfy conditions (i), (ii), and (iii) of Theorem~\ref{xxth3}, respectively, and therefore do not require a unique inertia.

\begin{eg}
	Let $\mathcal{P}\in \mathcal{Q}_4$ be an irreducible combinatorially symmetric sign pattern with a $0$-diagonal, whose underlying undirected graph is $G$ as in Example~\ref{xxeg1}. Since $G$ is a cycle with exactly one negative edge, so by Theorem~\ref{xxth3}(i), $\mathcal{P}$ does not require a unique inertia.
\end{eg}

\begin{eg} Suppose that $\mathcal{P}\in \mathcal{Q}_4$ is an irreducible combinatorially symmetric sign pattern with a $0$-diagonal, whose underlying signed undirected graph is the graph $G$ given in Fig~\ref{xfig3.3}.
	\begin{figure}[H]
		\centering
		\begin{minipage}{.45\textwidth}
			\vspace{-1cm}
			\[
			\mathcal{P}=
			\begin{bmatrix}
				0 & + & 0 & - \\
				- & 0 & + & 0 \\
				0 & - & 0 & + \\
				+ & 0 & - & 0
			\end{bmatrix}
			\]
		\end{minipage}
		\hspace{0.05\textwidth}
		\begin{minipage}{.45\textwidth}
			\centering
			\tikzset{node distance=1cm}
			\begin{tikzpicture}[
				vertex/.style={draw, circle, inner sep=0pt, minimum size=.15cm, fill=black},
				edge/.style={thick}
				]
				% Nodes
				\node[vertex,label=above:1] (v1) at(-.75,.75) {};
				\node[vertex,label=above:2] (v2) at( .75,.75) {};
				\node[vertex,label=below:3] (v3) at( .75,-.75) {};
				\node[vertex,label=below:4] (v4) at(-.75,-.75) {};
				
				% Edges
				\draw[edge] (v1)--node[above] {$-$}(v2);
				\draw[edge] (v2)--node[right] {$-$}(v3);
				\draw[edge] (v3)--node[below] {$-$}(v4);
				\draw[edge] (v1)--node[left]  {$-$}(v4);
			\end{tikzpicture}
			\caption{Signed undirected graph $ G$ of $\mathcal{P}$.}
			\label{xfig3.3}
		\end{minipage}
	\end{figure}
	
	Then $G$ is a cycle with all edges negatively signed, so by Theorem~\ref{xxth3}(ii), $\mathcal{P}$ does not require a unique inertia.
\end{eg}

\begin{eg}\label{xxeg3.9} Suppose that $\mathcal{P}\in \mathcal{Q}_4$ is an irreducible combinatorially symmetric sign pattern with a $0$-diagonal, whose underlying signed undirected graph is $G$ given in Fig.\ref{xnfig2}.
	
	\begin{figure}[H]
		\centering
		\begin{minipage}{.45\textwidth}
			\vspace{-1cm}
			\[
			\mathcal{P}=
			\begin{bmatrix}
				0 & + & 0 & + \\
				- & 0 & + & 0 \\
				0 & + & 0 & + \\
				+ & 0 & - & 0
			\end{bmatrix}
			\]
		\end{minipage}
		\hspace{0.05\textwidth}
		\begin{minipage}{.45\textwidth}
			\centering
			\tikzset{node distance=1cm}
			\begin{tikzpicture}[
				vertex/.style={draw, circle, inner sep=0pt, minimum size=.15cm, fill=black},
				edge/.style={thick}
				]
				% Nodes
				\node[vertex,label=above:1] (v1) at(-.75,.75) {};
				\node[vertex,label=above:2] (v2) at( .75,.75) {};
				\node[vertex,label=below:3] (v3) at( .75,-.75) {};
				\node[vertex,label=below:4] (v4) at(-.75,-.75) {};
				
				% Edges
				\draw[edge] (v1)--node[above] {$+$}(v2);
				\draw[edge] (v2)--node[right] {$-$}(v3);
				\draw[edge] (v3)--node[below] {$+$}(v4);
				\draw[edge] (v1)--node[left]  {$-$}(v4);
			\end{tikzpicture}
			\caption{Signed undirected graph $ G$ of $\mathcal{P}$.}
			\label{xnfig2}
		\end{minipage}
	\end{figure}
	Then $G$ is a cycle with a maximal signed path of odd length, so by Theorem~\ref{xxth3}(iii), $\mathcal{P}$ does not require a unique inertia.
\end{eg}

We use the following lemma to determine specific patterns or structures that an irreducible combinatorially symmetric sign pattern with a $0$-diagonal containing cycles cannot have if it requires a unique inertia.

%We use the following lemma to identify certain patterns or structures an irreducible combinatorially symmetric sign pattern with a $0$-diagonal and with cycles cannot have if it requires a unique inertia.

%  We use the following result to derive necessary conditions under which irreducible combinatorially symmetric sign patterns whose underlying signed undirected graph contains exactly one cycle require a unique inertia.
\begin{lemma}\label{l28}
	
	Let $\mathcal{P}\in \mathcal{Q}_n$ be an irreducible combinatorially symmetric sign pattern with a $0$-diagonal, whose underlying signed undirected graph is $G$ and the signed directed graph is $D$. Suppose that $G$ has exactly one cycle $\mathcal{C}=u_1u_2\cdots u_k$ of length $k$ and the distance of $\mathcal{C}$ from any leaf in $G$ is even. Then the maximum length of a composite cycle in $D$ is equal to the sum of the length of $\mathcal{C}$ and the maximum length of a composite cycle in $D\setminus V(\mathcal{C})$.       
\end{lemma}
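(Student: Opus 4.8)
The plan is to reduce the statement to a purely combinatorial fact about maximum matchings in the unicyclic graph $G$. First I would record the structure of the simple cycles of $D$. Since $\mathcal{P}$ is combinatorially symmetric with a $0$-diagonal and $G$ is unicyclic (its only cycle is $\mathcal{C}$), every simple cycle of $D$ is either a $2$-cycle $p_{ij}p_{ji}$ arising from an edge $\{i,j\}$ of $G$, or one of the two directed $k$-cycles obtained by traversing $\mathcal{C}$ in its two orientations. A composite cycle is a vertex-disjoint union of simple cycles, and since both $k$-cycles occupy the same vertex set $V(\mathcal{C})$, at most one of them can appear. Hence every composite cycle of $D$ is of one of two types: (A) it contains a $k$-cycle on $V(\mathcal{C})$ together with a family of $2$-cycles disjoint from $V(\mathcal{C})$, i.e. a matching of $G\setminus V(\mathcal{C})$; or (B) it is a family of $2$-cycles only, i.e. a matching of $G$. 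Writing $\mu(H)$ for the number of edges in a maximum matching of a graph $H$, the longest type-(A) composite cycle has length $k+2\mu(G\setminus V(\mathcal{C}))$, and $2\mu(G\setminus V(\mathcal{C}))$ is exactly the maximum length of a composite cycle in $D\setminus V(\mathcal{C})$, whose underlying graph $G\setminus V(\mathcal{C})$ is a forest and so contributes only $2$-cycles. The longest type-(B) composite cycle has length $2\mu(G)$. Thus the lemma is equivalent to the inequality $2\mu(G)\le k+2\mu(G\setminus V(\mathcal{C}))$.

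The key step, where the even-distance hypothesis enters, is a structural claim about the trees hanging off $\mathcal{C}$. I would write $G$ as the cycle $\mathcal{C}$ together with rooted trees $T_1,\dots,T_k$, where $T_i$ is the component of $G$ after deleting the cycle edges $E(\mathcal{C})$ that contains $u_i\in V(\mathcal{C})$, rooted at $u_i$; then $V(T_i)\cap V(\mathcal{C})=\{u_i\}$ and $G\setminus V(\mathcal{C})=\bigsqcup_i (T_i\setminus\{u_i\})$. For each $i$, every leaf of $T_i$ lies at even distance from $u_i$, equivalently at even depth in $T_i$. I would prove that under this hypothesis $\mu(T_i)=\mu(T_i\setminus\{u_i\})$ and that $T_i$ has a maximum matching missing its root. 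To see this, bipartition $V(T_i)$ into the sets $O_i$ and $E_i$ of odd- and even-depth vertices; every edge of $T_i$ joins $O_i$ to $E_i$, so every matching saturates at most $|O_i|$ vertices of $O_i$, giving $\mu(T_i)\le|O_i|$. Since all leaves are even-depth, each odd-depth vertex has at least one child, and assigning to each $v\in O_i$ one of its children produces a matching saturating $O_i$ (the assignment is injective because each vertex has a unique parent in a tree) and missing $u_i\in E_i$. Hence $\mu(T_i)=|O_i|$ is attained by a matching avoiding $u_i$, which is also a matching of $T_i\setminus\{u_i\}$, and therefore $\mu(T_i)=\mu(T_i\setminus\{u_i\})$.

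Finally I would assemble the count. Let $M$ be a maximum matching of $G$. Because $G$ has exactly one cycle, the only edges of $G$ with both endpoints in $V(\mathcal{C})$ are the edges of $\mathcal{C}$, and every other edge lies inside a single $T_i$. Writing $a$ for the number of edges of $M$ contained in $\mathcal{C}$, these occupy $2a\le k$ vertices of $V(\mathcal{C})$, so $a\le k/2$, and partitioning $M$ accordingly gives
$$
\mu(G)=|M|=a+\sum_{i=1}^{k}\bigl|M\cap E(T_i)\bigr|\le a+\sum_{i=1}^{k}\mu(T_i)=a+\sum_{i=1}^{k}\mu(T_i\setminus\{u_i\})=a+\mu(G\setminus V(\mathcal{C})).
$$
Since $a\le k/2$, this yields $2\mu(G)\le k+2\mu(G\setminus V(\mathcal{C}))$. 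Consequently the maximum composite-cycle length in $D$, which is $\max\{2\mu(G),\,k+2\mu(G\setminus V(\mathcal{C}))\}$, equals $k+2\mu(G\setminus V(\mathcal{C}))$, i.e. the length of $\mathcal{C}$ plus the maximum composite-cycle length in $D\setminus V(\mathcal{C})$, as claimed. I expect the main obstacle to be the middle paragraph, namely converting the even-distance condition into the matching identity $\mu(T_i)=\mu(T_i\setminus\{u_i\})$; the identification of the simple cycles of $D$ and the final count are routine once that is in place.
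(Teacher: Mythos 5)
Your proposal is correct, and it takes a genuinely different route from the paper. The paper proves the lemma by induction on the number of leaves of $G$: it picks a leaf $u$ at maximum distance from $\mathcal{C}$, looks at its neighbour $v$, and splits into cases according to whether $v$ supports one or several leaves, modifying maximum composite cycles under deletion of $u$ (and possibly $v$) until a graph with fewer leaves is reached. That argument is local and constructive but requires careful bookkeeping of how composite cycles transform under these deletions. You instead classify all composite cycles of $D$ globally — either a directed $k$-cycle on $V(\mathcal{C})$ together with a matching of the forest $G\setminus V(\mathcal{C})$, or a matching of $G$ — which reduces the lemma to the single inequality $2\mu(G)\le k+2\mu(G\setminus V(\mathcal{C}))$, and you then prove that inequality by showing that each pendant tree $T_i$ has a maximum matching of size $|O_i|$ that misses its root, via the depth-parity bipartition. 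This makes the role of the even-distance hypothesis completely transparent (odd-depth vertices are never leaves, so they can all be matched to children), avoids induction altogether, and as a bonus computes $\mu(T_i)$ exactly; it also immediately explains the paper's counterexample with an odd-distance leaf, where $2\mu(G)$ exceeds $k+2\mu(G\setminus V(\mathcal{C}))$. The only points worth spelling out, both immediate from unicyclicity, are that the non-root leaves of $T_i$ are precisely the leaves of $G$ lying in $T_i$ and that their distance to $\mathcal{C}$ equals their depth in $T_i$, and that $E(G)$ decomposes as $E(\mathcal{C})$ together with the edge sets of the $T_i$ (no chords of $\mathcal{C}$ and no edges between distinct $T_i$, since either would create a second cycle).
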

\begin{proof} Let $D'=D\setminus V(\mathcal{C})$. If $G$ has no leaf, then there is nothing to prove. Suppose that $G$ has at least one leaf, then we prove this result by using induction on $m$, where $m$ is the number of leaves in $G$. For $m=1$, let $P=\{v_0,v_1\}\{v_1,v_2\}\cdots \{v_{2r-1},v_{2r}\}$ be the path from a vertex $v_0$ of $\mathcal{C}$ to the leaf $v_{2r}$ in $G$. Then $\bar{\Gamma}=\gamma\gamma_1\gamma_3\cdots\gamma_{2r-1},$ is a composite cycle with maximum length in $D$, 
	where $\gamma$ is the directed cycle $(u_1u_2\cdots u_k)$ and each $\gamma_i$ is a directed $2$-cycle $(v_i,v_{i+1})$ for $i=1,3,...,2r-1$. Consider
	$$\Gamma=\gamma_1\gamma_3\cdots\gamma_{2r-1},$$ 
	then $\Gamma$ is a composite cycle with maximum length in $D'$. So, the result is true for $m=1$.
	
	Suppose that the result holds
	for all $m$, $m \leq t-1$, $t \geq 2$.
	For $m = t$, let $u$ be a leaf in $G$ such that $\dist(u,\mathcal{C})$ is maximum. Let $v$ be the unique neighbour of $u$, then $v\notin V(\mathcal{C})$.
	
	\textbf{Case-1:} $v$ is adjacent to more than one leaf in $G$.\\
	Let $P_1=\{v_0,v_1\}\{v_1,v_2\}\cdots \{v_{2r},v\}\{v,u\}$ be a path from a vertex $v_0$ of $\mathcal{C}$ to $u$ such that none of $v_1,v_2,...,v_{2r}$ is in $V(\mathcal{C})$ and let $w$ be a neighbour of $v$ which is not in $P_1$. 
	\begin{figure}[H]
		\tikzset{node distance=1cm}
		\centering
		\begin{tikzpicture}
			\tikzstyle{vertex}=[draw, circle, inner sep=0pt, minimum size=.15cm, fill=black]
			\tikzstyle{edge}=[,thick]
			\node[vertex,label=below:$~v_0$](v1)at(0,0){};
			\node[vertex,label=below:$v_1$](v7)at(1,0){};
			\node[vertex,label=below:$v_2$](v8)at(2,0){};
			\node[vertex,label=below:$v_{2r}$](v9)at(3,0){};
			\node[vertex,label=below:$v$](v10)at(4,0){};
			\node[vertex,label=below:$u$](v11)at(5,0){};
			\node[vertex,label=below:$w$](v12)at(5,-1){};
			\node[vertex](v13)at(-0.75,1){};
			\node[vertex](v14)at(-0.75,-1){};
			\node[vertex] (v15) at (5,1) {};
			\node[vertex] (v16) at (4,1) {};
			\draw[thick]   (0,1.5) to [bend left=35] node[left] {$ $} (0,-1.5);
			\draw[edge](v1)--node[]{$ $}(v7);
			\draw[edge](v7)--node[]{$ $}(v8);
			\draw[thick, dotted](v8)--node[]{$ $}(v9);
			\draw[edge](v9)--node[]{$ $}(v10);
			\draw[edge](v10)--node[]{$ $}(v11);
			\draw[edge](v10)--node[]{$ $}(v12);
			\draw[thick, dotted](v1)--node[]{$ $}(v13);
			\draw[thick, dotted](v1)--node[]{$ $}(v14);
			\draw[thick, dotted](v10)--node[]{$ $}(v15);
			\draw[thick, dotted](v10)--node[]{$ $}(v16);
			
		\end{tikzpicture}
		\caption{$G$}	
	\end{figure}
	%  Let $G'=G\setminus\{u\}$, then $G'$ has $t-1$ leaves and the distance of each leaf in $G'$ to $\mathcal{C}$ is even. So, by the induction hypothesis there exists a composite cycle $\Gamma'$ of length $l$ with maximum length in $(D\setminus \{u\})\setminus V(\mathcal{C})$ such that the maximum length of a composite cycle in $D\setminus\{u\}$ is equal to $k+l$.
	%the sum of the length of $\mathcal{C}$ and the length of $\mathcal{M'}$. 
	
	Let $\Gamma_1, \Gamma_2$ be composite cycles of maximum length in $D, D'$, respectively. To show $l(\Gamma_1)=k+l(\Gamma_2)$. If $(v,u)\in \Gamma_1~\text{or}~\Gamma_2$, then replace $(v,u)$ by $(v,w)$ in $\Gamma_1,\Gamma_2$. Then $\Gamma_1,\Gamma_2$ are composite cycles in $D\setminus\{u\}$. Since $G\setminus\{u\}$ has $t-1$ leaves and the distance of each leaf to $\mathcal{C}$ in $G\setminus\{u\}$ is even, so by the induction hypothesis $l(\Gamma_1)=k+l(\Gamma_2)$.  If $(v,u)\notin \Gamma_1,\Gamma_2$, then $\Gamma_1$, $\Gamma_2$ are also composite cycles in $D \setminus \{u\}$, and hence the result follows.
	
	%  Since $v$ is adjacent to more than one leaf of $G$, if $\Gamma$ is a composite cycle with maximum length $r$ in $D'=D\setminus V(\mathcal{C})$, then \textbf{$\{\Gamma\setminus((u,v)\cup (v,v_{2r}))\}\cup\{(v,w)\}$}****$\{\Gamma\setminus\{(u,v),(v,v_{2r})\}\}\cup(v,w)$ is also a composite cycle with length $r$ in $D'$, $D'\setminus \{u\}$, therefore, $r=l$. Similarly, the maximum length of a composite cycle in $D$ is equal to the maximum length of a composite cycle in $D\setminus\{u\}$, which is equal to $k+l$.

	%$\mathcal{M'}$ is also a composite cycle with maximum length in $D\setminus V(\mathcal{C})$ and the maximum length of a composite cycle in $D$ is also equal to the sum of the length of $\mathcal{C}$ and the length of $\mathcal{M'}$.
	
	\textbf{Case-2:} $v$ is adjacent to exactly one leaf in $G$.\\
	Similarly, let $P_1=\{v_0,v_1\}\{v_1,v_2\}\cdots \{v_{2r},v\}\{v,u\}$ be a path from a vertex $v_0$ of $\mathcal{C}$ to $u$ such that none of $v_1,v_2,...,v_{2r}$ are in $V(\mathcal{C})$. 
	\begin{figure}[H]
		\tikzset{node distance=1cm}
		\centering
		\begin{tikzpicture}
			\tikzstyle{vertex}=[draw, circle, inner sep=0pt, minimum size=.15cm, fill=black]
			\tikzstyle{edge}=[,thick]
			\node[vertex,label=below:$~v_0$](v1)at(0,0){};
			\node[vertex,label=below:$v_1$](v7)at(1,0){};
			\node[vertex,label=below:$v_2$](v8)at(2,0){};
			\node[vertex,label=below:$v_{2r}$](v9)at(3,0){};
			\node[vertex,label=below:$v$](v10)at(4,0){};
			\node[vertex,label=below:$u$](v11)at(5,0){};
			\node[vertex](v13)at(-0.75,1){};
			\node[vertex](v14)at(-0.75,-1){};
			
			\draw[thick] (0,1.5) to [bend left=35] node[left] {$ $} (0,-1.5);
			
			\draw[edge](v1)--node[]{$ $}(v7);
			\draw[edge](v7)--node[]{$ $}(v8);
			\draw[thick, dotted](v8)--node[]{$ $}(v9);
			\draw[edge](v9)--node[]{$ $}(v10);
			\draw[edge](v10)--node[]{$ $}(v11);	
			\draw[thick, dotted](v1)--node[]{$ $}(v13);
			\draw[thick, dotted](v1)--node[]{$ $}(v14);
		\end{tikzpicture}
		\caption{$G$}
	\end{figure}
	Let $\Gamma_1, \Gamma_2$ be composite cycles of maximum length in $D, D'$, respectively. Then either $(v,u) \in \Gamma_1$ (respectively, $\Gamma_2$), or $(v_{2r},v) \in \Gamma_1$ (respectively, $\Gamma_2$). If $(v_{2r},v)$ occurs in any one of $\Gamma_1, \Gamma_2$, replace it with $(v,u)$. Let $\Gamma_1'=\Gamma_1\setminus(v,u)$ and $\Gamma_2'=\Gamma_2\setminus(v,u)$. To prove the lemma, it is enough to show that $l(\Gamma_1')=k+l(\Gamma_2')$.

	%If $\Gamma$ is also a composite cycle with maximum length $l$ in $D'$, then \textbf{$\Gamma'=(\Gamma\setminus((v_{2r},v)\cup(v,u)))\cup(v,u)$}******$\Gamma'=\{\Gamma\setminus\{(v_{2r},v),(v,u)\}\}\cup(v,u)$ is a composite cycle of length $l$ in $D'$ and the length of \textbf{$\Gamma\setminus((v_{2r},v)\cup(v,u))$}*****$\Gamma\setminus\{(v_{2r},v),(v,u)\}$ is $l-1$ which is the maximum length of a composite cycle in $D'\setminus\{u,v\}$.
	%So, there exists a composite cycle $\mathcal{M}'$ of $D'\setminus\{u,v\}$ of maximum length such that $\mathcal{M}'\cup (u,v)$ gives a composite cycle of maximum length in $D'$. 
	%Similarly, there exists a maximum length cycle $\mathcal{C}'$ in $D\setminus\{u,v\}$ such that $\mathcal{C}'\cup (u,v)$ gives a composite cycle of maximum length in $D$. Also, the distance of any leaf in $G\setminus\{u,v\}$ from $\mathcal{C}$ is even. To prove the result, it is enough to prove it for $G_1=G\setminus\{u,v\}$ and $D_1=D\setminus\{u,v\}$.

	%if $\mathcal{M}'$ is a composite cycle with maximum length in $D'\setminus \{u,v\}$, then $\mathcal{M}'\cup(v,u)$ forms a composite cycle with maximum length in $D'$. Similarly, if $\mathcal{C}_1$ is a composite cycle with a maximum length in $D\setminus\{u,v\}$, then $\mathcal{C}_1\cup(v,u)$ forms a composite cycle with a maximum length in $D$. So, it is enough to show the corresponding result for $D\setminus\{u,v\}$.
	
	If $v_{2r}$ is not a leaf of $G_1=G\setminus\{u,v\}$, then $G_1$ has $t-1$ leaves. Then, by the induction hypothesis, the result follows.
	%there is a composite cycle $\mathcal{M}_1$ with a maximum length in $(D\setminus\{u,v\})\setminus V(\mathcal{C})$ such that the maximum length of the composite cycle in $D\setminus\{u,v\}$ is equal to the sum of the length of $\mathcal{C}$ and the length of $\mathcal{M}_1$. 
	%Therefore, if $\mathcal{M}=\mathcal{M}_1\cup (u,v)$, then $\mathcal{M}$ forms a composite cycle with maximum length in $D\setminus V(\mathcal{C})$ and the maximum length of the composite cycle in $D$ is equal to the sum of the length of $\mathcal{C}$ and the length of $\mathcal{M}$.
	If $v_{2r}$ is a leaf of $G_1$, then apply the previous arguments with $D$ replaced by $D\setminus\{u,v\}$.

	%\\
	%\textbf{Subcase-1:} There is no such path from $v_{2r-1}$ to any leaf of $G\setminus\{u,v\}$ that does not share edges with $P_1$.\\

	%Then consider $\{D\setminus \{u,v\}\}\setminus\{v_{2r,}v_{2r-1}\}$ and repeat the above process.\\
	%\textbf{Subcase-2:} There is a path form $v_{2r-1}$ to a leaf of $G\setminus\{u,v\}$ that does not share edges** with $P_1$.\\ 

	%Let $u'$ be a leaf in $G\setminus\{u,v\}$ such that $\dist(u',v_{2r-1})$ is maximum and the shortest path from $u'$ to $v_{2r-1}$ in $G\setminus\{u,v\}$ does not share edges with $P_1$.
	%Then, apply the argument of the previous cases with $D$ replaced by $D\setminus\{u,v\}$
	%and $u$ replaced by $u'$.
	
	%Since $\dist(u',\mathcal{C})$ is even, $\dist(u',v_{2r-1})$ is odd. If $\dist(u',v_{2r-1})=1$, then similarly as in case-1, the result in hold. If $\dist(u',v_{2r-1})>1$, then we apply the previous argument on $u'$.\\
	
	After a finite number of steps, the above process of deleting edges gives a signed undirected graph $G_p$ with fewer than $t$ leaves. Let $D_p$ be the signed directed graph corresponding to $G_p$. By the induction hypothesis, there exists a composite cycle $\Gamma_p$ of maximum length in $D_p\setminus V(\mathcal{C})$ such that the maximum length of a composite cycle in $D_p$ is equal to the sum of the length of $\mathcal{C}$ and the length of $\Gamma_p$, so the result follows. %By adding the $2$-cycles corresponding to the deleted signed edges of $G$ with $\bar{\mathcal{M}}$, we obtained the required result.
\end{proof}

The conclusion of the above theorem is not true if the distance between a leaf and the cycle in $G$ is odd. 
\begin{eg}
	Let $\mathcal{P}\in \mathcal{Q}_6$ be an irreducible combinatorially symmetric sign pattern with a $0$-diagonal, whose underlying undirected graph $G$ is given in Fig.\ref{fig11.1},
	\begin{figure}[H]\label{fig11.1}
		\tikzset{node distance=1cm}
		\centering
		\begin{tikzpicture}
			\tikzstyle{vertex}=[draw, circle, inner sep=0pt, minimum size=.15cm, fill=black]
			\tikzstyle{edge}=[,thick]
			\node[vertex,label=above:$u_1$, ](v1)at(-1,1){};
			\node[vertex,label=below:$u_2$](v2)at(-2,0){};
			\node[vertex,label=below:$u_3$](v3)at(0,0){};
			\node[vertex,label=below:$u_4$](v4)at(1,0){};
			\node[vertex,label=below:$u_5$](v5)at(2,0){};
			\node[vertex,label=below:$u_6$](v6)at(3,0){};
			
			\draw[edge](v1)--node[]{$ $}(v2);
			\draw[edge](v2)--node[]{$ $}(v3);
			\draw[edge](v3)--node[]{$ $}(v1);
			\draw[edge](v3)--node[]{$ $}(v4);
			\draw[edge](v4)--node[]{$ $}(v5);
			\draw[edge](v5)--node[]{$ $}(v6);
		\end{tikzpicture}
		\caption{$G$}
	\end{figure} Then $G$ contains exactly one cycle $\mathcal{C}=u_1u_2u_3$ of length $3$. Let $D$ be the signed directed graph corresponding to $G$. Then the composite cycle $\bar{\Gamma} = (u_1, u_2)(u_3, u_4)(u_5, u_6)$ has length $6$, which is the maximum among the lengths of all composite cycles in $D$ and a composite cycle of maximum length in $D\setminus V(\mathcal{C})$ is $\Gamma=(u_4,u_5)$ of length $2$. Therefore, the maximum length of a composite cycle in $D$ is not equal to the sum of the length of $\mathcal{C}$ and the maximum length of a composite cycle in $D\setminus V(\mathcal{C})$.       
\end{eg}

\begin{theorem}\label{xxth3.14}
	Let $\mathcal{P}\in \mathcal{Q}_n$ be an irreducible combinatorially symmetric sign pattern with a $0$-diagonal, whose underlying signed undirected graph $G$ has exactly one cycle $\mathcal{C}=u_1u_2\cdots u_k$, and the distance of $\mathcal{C}$ from any leaf in $G$ is even. Then $\mathcal{P}$ does not require a unique inertia if any one of the following holds:
	\begin{itemize}
		\item[i.] $\mathcal{C}$ has an odd number of negative edges.
		\item[ii.] Every edge of \(\mathcal{C}\) is negative.
		\item[iii.] $k$ is even and $\mathcal{C}$ has a maximal signed path of odd length.
	\end{itemize}
\end{theorem}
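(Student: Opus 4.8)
The plan is to read Theorem~\ref{xxth3.14} as the extension of Theorem~\ref{xxth3} from a bare cycle to a cycle $\mathcal{C}$ carrying pendant trees, and to let Lemma~\ref{l28} absorb the tree part once and for all. Write $D$ for the signed directed graph of $\mathcal{P}$ and $D'=D\setminus V(\mathcal{C})$. Since $G\setminus V(\mathcal{C})$ is a forest, every composite cycle of $D'$ is a disjoint union of $2$-cycles, i.e.\ a matching; fix a maximum one $\Gamma_T$, of length $l_T$. Because the distance from $\mathcal{C}$ to every leaf is even, Lemma~\ref{l28} gives that the maximum composite-cycle length in $D$ is $m=k+l_T$, attained by appending $\Gamma_T$ to any spanning cycle structure on $V(\mathcal{C})$. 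This is the device that lets me run the three cycle-arguments of Theorem~\ref{xxth3} on $\mathcal{C}$ while $\Gamma_T$ rides along disjointly.

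For (i) I would use the two directed $k$-cycles $\gamma=(u_1u_2\cdots u_k)$ and $\gamma'=(u_1u_k\cdots u_2)$ traversing $\mathcal{C}$ in opposite senses. An odd number of negative edges on $\mathcal{C}$ forces $\sgn(\gamma)=-\sgn(\gamma')$, so $\Gamma_1=\gamma\,\Gamma_T$ and $\Gamma_2=\gamma'\,\Gamma_T$ are two composite cycles of the maximum length $m$ with $\sgn(\Gamma_1)=-\sgn(\Gamma_2)$; Lemma~\ref{xnl1} then denies unique inertia. Case (iii) is the Lemma~\ref{xnl2} argument of Theorem~\ref{xxth3}(iii) transplanted to $\mathcal{C}$: from the odd maximal signed path on the even cycle I build the edge set whose only adjacent pairs are oppositely signed, split it into a negative matching and a positive matching of total length $k+2$, and then enlarge these by the negative $2$-cycles and the positive $2$-cycles of $\Gamma_T$ respectively. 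Since $k$ is even all cycles of $D$ are even, and the resulting $\Gamma_1$ (all negative $2$-cycles) and $\Gamma_2$ (all positive $2$-cycles) satisfy $l(\Gamma_1)+l(\Gamma_2)=(k+2)+l_T=m+2$, so Lemma~\ref{xnl2} applies.

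Case (ii) is where I expect the real work. If $k$ is odd it has an odd number of negative edges and reduces to (i); the difficulty is $k$ even. Here the skew-symmetric matrix of Theorem~\ref{xxth3}(ii) is unavailable, because the pendant edges need not be negative, and Lemma~\ref{xnl2} may fail outright since $\mathcal{C}$ contributes no positive $2$-cycle and the trees may contribute none either. My resolution is a direct $i_0$-comparison. Following the construction in \eqref{e2} (in its composite-cycle form) I emphasize, in $B_1$, a perfect matching of the even cycle $\mathcal{C}$ into negative $2$-cycles together with $\Gamma_T$, and in $B_2$ the full directed cycle $\gamma$ together with the same $\Gamma_T$, placing $\epsilon$ on all remaining nonzero entries. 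Because $k$ is even the characteristic polynomials have only even powers and are divisible by $x^{\,n-m}$, giving exactly $n-m$ zero eigenvalues in both matrices; for $\epsilon$ small the remaining $m$ eigenvalues sit near the decoupled values, and by Lemma~\ref{lem1} together with conjugate symmetry the ones near nonzero purely imaginary points stay purely imaginary (choosing distinct magnitudes makes these blocks algebraically simple). The matching makes all $k$ cycle-eigenvalues purely imaginary, whereas among the $k$-th roots of $+1$ or $-1$ produced by $\gamma$ only $\pm i$ can be imaginary; writing $q\le 2$ for their number, and noting that the identical $\Gamma_T$ contributes the same amount to both counts, I get $i_0(B_1)-i_0(B_2)=k-q\ge k-2\ge 2>0$. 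Hence $i_0$ is not constant on $\mathcal{Q}(\mathcal{P})$, and Theorem~\ref{th4} (equivalently Theorem~\ref{lem2}) closes the case.

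The main obstacle is precisely this $k$-even subcase of (ii): the two standard engines—opposite-signed maximum cycles for Lemma~\ref{xnl1}, and a negative/positive $2$-cycle pair for Lemma~\ref{xnl2}—can both be unavailable, so the argument must fall back on counting zero-real-part eigenvalues directly. The delicate points there are to confirm that the $x^{\,n-m}$ factor and the dominant emphasized blocks pin the spectrum down well enough that the $\Gamma_T$-contributions to $i_0$ are \emph{literally identical} in $B_1$ and $B_2$ and therefore cancel, leaving the clean gap $k-q$. Everything else is a faithful transcription of the three cases of Theorem~\ref{xxth3}, with $\Gamma_T$ appended and Lemma~\ref{l28} guaranteeing the appended object still realizes the maximum length $m$.
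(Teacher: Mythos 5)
Your proposal is correct and follows essentially the same route as the paper: Lemma~\ref{l28} fixes the maximum composite-cycle length $m=k+l(\Gamma_T)$, case (i) appends $\Gamma_T$ to the two oppositely signed traversals and invokes Lemma~\ref{xnl1}, case (iii) splits the $(k+2)$-length edge set plus $\Gamma_T$ into negative/positive matchings for Lemma~\ref{xnl2}, and case (ii) compares the negatively matched cycle against the full directed cycle with the same tree block attached at dominant distinct weights. Your explicit $i_0(B_1)-i_0(B_2)=k-q\ge 2$ count in case (ii) is just a slightly more quantitative phrasing of the paper's conclusion that $\In(B_{\Gamma_1'}(\epsilon))\neq\In(B_{\Gamma_2'}(\epsilon))$, resting on the same even-characteristic-polynomial and simplicity arguments.
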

\begin{proof}
	If $G$ contains no leaf, then it follows from Theorem~\ref{xxth3} that $\mathcal{P}$ does not require a unique inertia. Assume that $G$ has at least one leaf. Let $D$ be the signed directed graph corresponding to $G$, and let $\Gamma=\beta_1\beta_2\cdots \beta_l$  be a composite cycle of maximum length in $D\setminus V(\mathcal{C})$, with $l(\Gamma)=2l$, where each $\beta_r$ is a $2$-cycle for $r=1,2,...,l$. Then, by Lemma~\ref{l28}, the maximum length of a composite cycle in $D$ is $m = k+2l$.
	\begin{itemize}
		\item[i.] If $\mathcal{C}$ has an odd number of negative edges, then $\gamma_1=(u_1u_2\cdots u_k)$ and $\gamma_2=(u_ku_{k-1}\cdots u_1)$ are two cycles in $D$ of length $k$ with $\sgn(\gamma_1)=-\sgn(\gamma_2)$. 
		Therefore, 
		$$
		\Gamma_1 = \gamma_1\Gamma \quad\text{and}\quad \Gamma_2 = \gamma_2\Gamma
		$$
		are two composite cycles in $D$ of length $m$ satisfying $\sgn(\Gamma_1)=-\sgn(\Gamma_2)$. 
		By Lemma~\ref{xnl1}, $\mathcal{P}$ does not require a unique inertia.
		
		\item[ii.] If $k$ is odd and all edges of $\mathcal{C}$ are negatively signed, then by part (i), $\mathcal{P}$ does not require a unique inertia. Suppose that $k$ is even, then similarly as in the proof of Theorem~\ref{xxth3}(ii), $D$ has two composite cycles
		$\Gamma_1,\Gamma_2$ of length $k$ and two real matrix $B_{\Gamma_1}(\epsilon) = [b_{\Gamma_1}(\epsilon)_{ij}],B_{\Gamma_2}(\epsilon) = [b_{\Gamma_2}(\epsilon)_{ij}] \in \mathcal{Q}(\mathcal{P}[\{u_1,u_2,...,u_k\}])$ with $\In(B_{\Gamma_1}(\epsilon))\neq \In(B_{\Gamma_2}(\epsilon))$.
		
		Since the eigenvalues of $B_{\Gamma_1}(\epsilon)$ are close to the second complex roots of $-10^{2p}$ for $p=1,3,...,k-1$, and $\Ch_{B_{\Gamma_1}(\epsilon)}(x)$ consists only of the even power of $x$, it follows from Lemma~\ref{lem1} that for $\epsilon>0$ sufficiently small, $B_{\Gamma_1}(\epsilon)$ has $k$ distinct purely imaginary eigenvalues. Also, the eigenvalues of $B_{\Gamma_2}(\epsilon)$ are distinct.  Let,
		$$M>\max\{1,|\lambda|: \lambda~ \text{is an eigenvalue of either} ~B_{\Gamma_1}(\epsilon)~\text{or}~B_{\Gamma_2}(\epsilon)\}.$$ 
		Consider $$\Gamma_1'=\Gamma_1\Gamma \quad\text{and}\quad \Gamma_2'=\Gamma_2\Gamma,$$
		and define the real matrices $B_{\Gamma_r'}(\epsilon)=[b_{\Gamma_r'}(\epsilon)_{ij}]\in \mathcal{Q}(\mathcal{P})$ for $r=1,2$, as
		$$|b_{\Gamma_r'}(\epsilon)_{ij}|= \begin{cases}
			|b_{\Gamma_r}(\epsilon)_{ij}| & \text{if}\ p_{ij}~ \text{is in } \Gamma_r \\
			M^p & \text{if}\ p_{ij}~ \text{is in}~ \beta_p,~\text{for}~p=1,2,...,l \\
			\epsilon & \text{if}~ p_{ij}\neq 0~\text{and not in}~ \Gamma_r, \Gamma\\
			0 & \text{elsewhere.}
		\end{cases}$$
		For $\epsilon>0$ sufficiently small, the eigenvalues of $B_{\Gamma_r'}(\epsilon)$ remain close to the eigenvalues of $B_{\Gamma_r}(\epsilon)$  for $r=1,2$, together with the second complex roots of $M^{2p}$ or $-M^{2p}$ depending on the sign of $\beta_p$, for $p=1,3,...,l$. Since $\In(B_{\Gamma_1}(\epsilon))\neq \In(B_{\Gamma_2}(\epsilon))$ and $D$ contain only cycles of even length, by Lemma~\ref{lem1}, $\In(B_{\Gamma_1'}(\epsilon))\neq \In(B_{\Gamma_2'}(\epsilon))$. Therefore, $\mathcal{P}$ does not require a unique inertia.

		\item[iii.]  If $k$ is even and $\mathcal{C}$ has a maximal signed path of odd length, then similarly as in the proof of Theorem \ref{xxth3}(iii), $\mathcal{C}$ has two matchings $\mathcal{M}_1$ containing $k_1$ number of negative edges and $\mathcal{M}_2$ containing $k_2$ number of positive edges in $G$, such that $2k_1+2k_2=k+2$. Suppose that $\Gamma_1$, $\Gamma_2$ are the composite cycles in $D$ corresponding to $\mathcal{M}_1$, $\mathcal{M}_2$, respectively. Let $\bar{\Gamma}_1, \bar{\Gamma}_2$ be two composite cycles in $D\setminus V(\mathcal{C})$ containing only negative, positive $2$-cycles of $\Gamma$, respectively. Consider $$\Gamma_1'=\Gamma_1\bar{\Gamma}_1\quad\text{and}\quad\Gamma_2'=\Gamma_2\bar{\Gamma}_2.$$
		Therefore, $\Gamma_1'$ (respectively, $\Gamma_2'$) is a composite cycle in the signed directed graph $D$ of $\mathcal{P}$ consisting only of negative (respectively, positive) $2$-cycles, and since $l(\Gamma_1') + l(\Gamma_2') = m + 2$, it follows from Lemma~\ref{xnl2} that $\mathcal{P}$ does not require a unique inertia. 
	\end{itemize}
\end{proof}
%%%%%%%3.14(i)%%%%%%%
The following are examples of sign patterns that satisfy conditions (i), (ii), and (iii) of the previous theorem, respectively, and therefore do not require a unique inertia.

\begin{eg}\label{xxeg12}
	Let $\mathcal{P}\in \mathcal{Q}_6$ be an irreducible combinatorially symmetric sign pattern with a $0$-diagonal, whose underlying undirected graph $G$ is  given in Fig.\ref{xnfig4.1},
	\[
	\mathcal{P} =
	\begin{bmatrix}
		0 & + & 0 & + & 0 & 0 \\
		- & 0 & + & 0 & 0 & 0 \\
		0 & + & 0 & + & 0 & 0 \\
		+ & 0 & + & 0 & + & 0 \\
		0 & 0 & 0 & + & 0 & + \\
		0 & 0 & 0 & 0 & + & 0
	\end{bmatrix}
	\]
	
	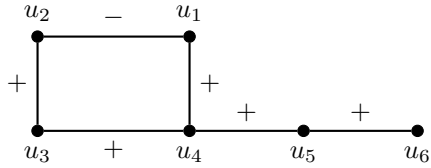
\begin{figure}[H]
		\centering
		\tikzset{node distance=1cm}
		\begin{tikzpicture}
			\tikzstyle{vertex}=[draw, circle, inner sep=0pt, minimum size=0.15cm, fill=black]
			\tikzstyle{edge}=[thick]
			\node[vertex, label=above:$u_2$](v2) at (-2,1.25) {};
			\node[vertex, label=above:$u_1$](v1) at (0,1.25) {};
			\node[vertex, label=below:$u_3$](v3) at (-2,0) {};
			\node[vertex, label=below:$u_4$](v4) at (0,0) {};
			\node[vertex, label=below:$u_5$](v5) at (1.5,0) {};
			\node[vertex, label=below:$u_6$](v6) at (3,0) {};
			
			\draw[edge] (v1)--node[above]{$-$} (v2);
			\draw[edge] (v2)--node[left]{$+$} (v3);
			\draw[edge] (v3)--node[below]{$+$} (v4);
			\draw[edge] (v4)--node[above]{$+$} (v5);
			\draw[edge] (v1)--node[right]{$+$} (v4);
			\draw[edge] (v5)--node[above]{$+$} (v6);
		\end{tikzpicture}
		\caption{Signed undirected graph $G$ of $\mathcal{P}$.}
		\label{xnfig4.1}
	\end{figure}
	Then $G$ has exactly one cycle $\mathcal{C}=u_1u_2u_3u_4$, of length $4$, which contains exactly one negative edge $\{u_1,u_2\}$. The distance from the cycle $\mathcal{C}$ to the leaf vertex $u_6$ is $2$. Thus, $\mathcal{P}$ satisfies the conditions stated in (i) of Theorem~\ref{xxth3.14}, which implies that $\mathcal{P}$ does not require a unique inertia.
	%Consider two composite cycles in the underlying signed directed graph $D$ of $\mathcal{P}$ as $\Gamma_1=(u_1u_2u_3u_4)(u_5u_6)$ and $\Gamma_2=(u_1u_4u_3u_2)(u_5u_6)$. Then both $\Gamma_1, \Gamma_2$ have length $6$, and $\sgn(\Gamma_1)=-\sgn(\Gamma_2)$. Define matrices $B_{\Gamma_t}=[b_{ij}^t]\in \mathcal{Q}(\mathcal{P})$ by
	% \begin{equation}\label{xe3}
		% |b_{ij}^t|= \begin{cases}
			%		10 & \text{if}\ p_{ij}~ \text{is}~
			%        \text{in}~ \Gamma_t \\
			%			1 & \text{if}\ p_{ij}\neq 0~\text{and}~ \text{is not in}~ \Gamma_t \\
			%		0 & \text{elsewhere,}
			%		\end{cases}
		%     \end{equation} for $t=1,2$. Then $B_{\Gamma_1}, B_{\Gamma_2}\in \mathcal{Q}(\mathcal{P})$ with $\In(B_{\Gamma_1})=(2,2,2)$ and $\In(B_{\Gamma_2})=(3,3,0)$. Therefore, $\mathcal{P}$ does not require a unique inertia.
\end{eg}

\begin{eg}
	Let $\mathcal{P}\in \mathcal{Q}_6$ be an irreducible combinatorially symmetric sign pattern with a $0$-diagonal, whose underlying undirected graph $G$ is  given in Fig.\ref{xnfig4.2},
	\[
	\mathcal{P} =
	\begin{bmatrix}
		0 & + & 0 & - & 0 & 0 \\
		- & 0 & + & 0 & 0 & 0 \\
		0 & - & 0 & + & 0 & 0 \\
		+ & 0 & - & 0 & + & 0 \\
		0 & 0 & 0 & + & 0 & + \\
		0 & 0 & 0 & 0 & + & 0
	\end{bmatrix}
	\]
	
	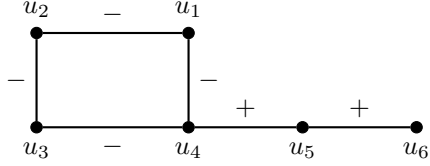
\begin{figure}[H]
		\centering
		\tikzset{node distance=1cm}
		\begin{tikzpicture}
			\tikzstyle{vertex}=[draw, circle, inner sep=0pt, minimum size=0.15cm, fill=black]
			\tikzstyle{edge}=[thick]
			\node[vertex, label=above:$u_2$](v2) at (-2,1.25) {};
			\node[vertex, label=above:$u_1$](v1) at (0,1.25) {};
			\node[vertex, label=below:$u_3$](v3) at (-2,0) {};
			\node[vertex, label=below:$u_4$](v4) at (0,0) {};
			\node[vertex, label=below:$u_5$](v5) at (1.5,0) {};
			\node[vertex, label=below:$u_6$](v6) at (3,0) {};
			
			\draw[edge] (v1)--node[above]{$-$} (v2);
			\draw[edge] (v2)--node[left]{$-$} (v3);
			\draw[edge] (v3)--node[below]{$-$} (v4);
			\draw[edge] (v4)--node[above]{$+$} (v5);
			\draw[edge] (v1)--node[right]{$-$} (v4);
			\draw[edge] (v5)--node[above]{$+$} (v6);
		\end{tikzpicture}
		\caption{Signed undirected graph $G$ of $\mathcal{P}$.}
		\label{xnfig4.2}
	\end{figure}
	Then $G$ has exactly one cycle $\mathcal{C}=u_1u_2u_3u_4$, of length $4$, with all edges negatively signed. Also, the distance from the cycle $\mathcal{C}$ to the leaf vertex $u_6$ is $2$. Thus, $\mathcal{P}$ satisfies the conditions stated in (ii) of Theorem~\ref{xxth3.14}, which implies that $\mathcal{P}$ does not require a unique inertia.
\end{eg}

\begin{eg}
	Let $\mathcal{P}\in \mathcal{Q}_6$ be an irreducible combinatorially symmetric sign pattern with a $0$-diagonal, whose underlying undirected graph $G$ is given in Example~\ref{xxeg12}.
	Then $G$ has exactly one cycle $\mathcal{C}=u_1u_2u_3u_4$, of length $4$, which contains a maximal signed path of odd length. The distance from the cycle $\mathcal{C}$ to the leaf vertex $u_6$ is $2$. Thus, $\mathcal{P}$ satisfies the conditions stated in (iii) of Theorem~\ref{xxth3.14}, which implies that $\mathcal{P}$ does not require a unique inertia.
\end{eg}

From Examples \ref{eg0.6}, \ref{egxx0.6}, we can similarly construct examples to show that if neither of the three conditions of Theorem~\ref{xxth3.14} are satisfied by a sign pattern, then we cannot conclude affirmatively in either way, that is, we can find examples of sign patterns which require a unique inertia as well as those which do not require a unique inertia.

%%%%%%%%%%%%%%%%%%%%%%%%%%%%%%%%%%%%%%%%%%%%%%%%%
We next turn our attention to irreducible combinatorially symmetric sign patterns with more than one cycle in the underlying graph. 

% To state the next results, we need the following definition.
\begin{definition}
	Suppose that $\mathcal{P}$ is an irreducible combinatorially symmetric sign pattern whose underlying signed undirected graph is $G$. Then any two cycles $\mathcal{C}_1$, $\mathcal{C}_2$ in $G$ are called \textit{path-adjacent} if there exists a path $uv_1v_2\cdots v_{r}w$ in $G$ where $u\in V(\mathcal{C}_1)$, $w\in V(\mathcal{C}_2)$ and $v_1,v_2,...,v_{r}$ are not vertices of any cycle in $G$.
\end{definition}

\begin{lemma}\label{nl111}
	Let $\mathcal{P}\in \mathcal{Q}_n$ be an irreducible combinatorially symmetric sign pattern with a $0$-diagonal, whose underlying signed undirected graph is $G$ and the signed directed graph is $D$. Suppose $G$ contains at least one cycle, has no leaf, and let the distance between any two path-adjacent cycles in $G$ be odd. Let $\mathcal{C}=u_1u_2\cdots u_k$ be any cycle in $G$ where $k\geq 3$, then $D$ has a composite cycle of length $n$ containing the directed cycle $(u_1,u_2,...,u_k)$.
\end{lemma}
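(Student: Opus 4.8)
The plan is to translate the assertion about the signed directed graph $D$ into a statement about the undirected graph $G$. Since $\mathcal{P}$ is combinatorially symmetric with a $0$-diagonal, a simple $2$-cycle of $D$ is precisely an edge of $G$ (traversed both ways), while a simple cycle of $D$ of length $\ge 3$ is an oriented cycle of $G$; and a composite cycle of length $n$ is a family of vertex-disjoint simple cycles covering all $n$ vertices. Hence producing a composite cycle of length $n$ that contains the oriented cycle $(u_1,u_2,\dots,u_k)$ is equivalent to exhibiting a spanning subgraph $H$ of $G$ whose components are edges and cycles, with $\mathcal{C}$ itself among the cycle components. I would record this reduction at the outset and work entirely with $G$ thereafter.

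The crucial structural step is to prove that every vertex of $G$ lying on no cycle has degree exactly $2$. Let $y$ be such a vertex; since $G$ has no leaf, $\deg(y)\ge 2$, and I would show $\deg(y)\le 2$ by contradiction. If $\deg(y)\ge 3$, then, as $y$ lies on no cycle, every edge at $y$ is a bridge and no two neighbours of $y$ are joined in $G\setminus\{y\}$, so $G\setminus\{y\}$ has at least three components; none can be a tree, for otherwise it would contain a leaf of $G$, so each contains a cycle. Picking in three of the components a cycle $\mathcal{D}_i$ nearest to $y$ and setting $a_i=\dist(y,\mathcal{D}_i)$, the shortest $y$-to-$\mathcal{D}_i$ paths have cycle-free interiors, so any two of them concatenate through the non-cycle vertex $y$ into a path with cycle-free interior; this makes $\mathcal{D}_i,\mathcal{D}_j$ path-adjacent at distance $a_i+a_j$. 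The hypothesis then forces $a_1+a_2,\ a_1+a_3,\ a_2+a_3$ to be simultaneously odd, which is impossible by parity. Hence $\deg(y)=2$, the non-cycle vertices form vertex-disjoint paths, and each extends to a bridge-path $u-x_1-\cdots-x_r-w$ joining two distinct cycles and realizing their (odd) distance; therefore $r$ is even and every such path has an even number of interior non-cycle vertices.

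With this picture I would build $H$ directly: take the oriented cycle $(u_1,\dots,u_k)$ for $\mathcal{C}$; take one oriented cycle for each of the remaining, pairwise vertex-disjoint, cycles of $G$; and on each bridge-path pair its even number of interior vertices into consecutive $2$-cycles $(x_1,x_2),(x_3,x_4),\dots,(x_{r-1},x_r)$. Then every cycle vertex is covered exactly once by a directed cycle and every non-cycle vertex exactly once by a $2$-cycle, all mutually vertex-disjoint, so their product is the required composite cycle of length $n$ in $D$ containing $(u_1,\dots,u_k)$.

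I expect the main obstacle to be the degree argument of the second paragraph: the delicate point is to select, inside each component of $G\setminus\{y\}$, a cycle closest to $y$ so that the linking paths have cycle-free interiors and thus genuinely certify path-adjacency, after which the three odd-distance conditions collide by parity. A secondary technical point is that the cycles of $G$ must be pairwise vertex-disjoint so that they may all be oriented at once; I would invoke this feature of the structure forced by the hypotheses (cycles being joined only through odd-length bridge-paths) to guarantee that the cycle components of $H$ do not overlap.
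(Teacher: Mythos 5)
Your proposal is correct, but it takes a genuinely different route from the paper. The paper proves the lemma by induction on the number $m$ of cycles in $G$: it picks a minimum-distance path $u_1v_1\cdots v_{2r}w_1$ from $\mathcal{C}$ to each path-adjacent cycle, asserts that the interior vertices $v_i$ have degree $2$, covers each such interior by the matching $(v_1,v_2)(v_3,v_4)\cdots(v_{2r-1},v_{2r})$, deletes $V(\mathcal{C})$ together with these path interiors, and applies the induction hypothesis to the components of what remains. You instead give a direct global construction: you first prove that \emph{every} vertex lying on no cycle has degree exactly $2$, and then read off the spanning family of disjoint simple cycles in one step (orient every cycle of $G$, and pair the even number of interior vertices of each bridge-path into $2$-cycles). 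The most valuable difference is your three-component parity argument for the degree claim: if a non-cycle vertex $y$ had degree at least $3$, the components of $G\setminus\{y\}$ each contain a cycle, and the nearest ones would be pairwise path-adjacent at distances $a_i+a_j$, which cannot all be odd since their sum $2(a_1+a_2+a_3)$ is even. The paper simply asserts $\deg(v_i)=2$ from the hypotheses without justification, so your lemma is an honest proof of a step the paper leaves implicit; in exchange, the paper's induction avoids having to describe the global structure of $G$ and only handles one cycle and its attached paths at a time. Two caveats apply equally to both arguments and are worth making explicit if you write this up: (a) the pairwise vertex-disjointness of the cycles of $G$ (needed so that all of them can be oriented simultaneously, and so that a bridge-path joins two \emph{distinct} disjoint cycles) must be extracted from the odd-distance hypothesis — you correctly flag this, whereas the paper uses it silently; and (b) the notion of ``distance between path-adjacent cycles'' must be read as the minimum length of a certifying path with cycle-free interior, which is the reading both you and the paper adopt.
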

\begin{proof}
	We prove this result using induction in $m$, where $m$ is the number of cycles in $G$. For $m=1$, the result is trivially true.
	
	Suppose that the result holds for all $m$, $m \leq t-1$, $t\geq 2$. For $m=t$, let $u_1v_1v_2\cdots v_{2r}w_1$, $r\in \mathbb{N}$, be a minimum distance path in $G$ from the cycle $\mathcal{C}$ to some cycle $\mathcal{C}_1$, path-adjacent to $\mathcal{C}$. We denote this path by $u_1P_1w_1$, where $P_1=\{v_1,v_2\}\{v_2,v_3\}\cdots \{v_{2r-1},v_{2r}\}$ with $u_1\in V(\mathcal{C})$, $w_1\in V(\mathcal{C}_1)$ and $v_1,v_2,...,v_{2r}\notin V(\mathcal{C}),V(\mathcal{C}_1)$. Since the distance between any two path-adjacent cycles is odd and $G$ does not contain any leaf, $\deg(v_i)=2$ in $G$ for all $i=1,2,...,2r$. In addition, $\Gamma_1=(v_1,v_2)(v_3,v_4)\cdots(v_{2r-1},v_{2r})$ forms a composite cycle in $D$ containing all vertices of $P_1$.
	
	\begin{figure}[H]
		\tikzset{node distance=1cm}
		\centering
		\begin{tikzpicture}
			\tikzstyle{vertex}=[draw, circle, inner sep=0pt, minimum size=.15cm, fill=black]
			\tikzstyle{edge}=[,thick]
			\node[vertex,label=below:$~u_1$](v1)at(0,0){};
			\node[vertex,label=below:$v_1$](v7)at(1,0){};
			\node[vertex,label=below:$v_2$](v8)at(2,0){};
			\node[vertex,label=below:$v_{2r-1}$](v9)at(3,0){};
			\node[vertex,label=below:$v_{2r}$](v10)at(4,0){};
			\node[vertex, label=below:$w_1~$](v11)at(5,0){};
			\node[vertex](v13)at(-0.75,1){};
			\node[vertex](v14)at(-0.75,-1){};
			\node[vertex](v15)at(5.75,1){};
			\node[vertex](v16)at(5.75,-1){};
			
			\draw[edge](0,1.5) to [bend left=35] (0,-1.9);
			\draw[edge](5,1.5) to [bend right=35] (5,-1.9);
			\draw[edge](v1)--node[]{$ $}(v7);
			\draw[edge](v7)--node[]{$ $}(v8);
			\draw[thick, dotted](v8)--node[]{$ $}(v9);
			\draw[edge](v9)--node[]{$ $}(v10);
			\draw[edge](v10)--node[]{$ $}(v11);	
			\draw[thick, dotted](v1)--node[]{$ $}(v13);
			\draw[thick, dotted](v1)--node[]{$ $}(v14);
			\draw[thick, dotted](v11)--node[]{$ $}(v15);
			\draw[thick, dotted](v11)--node[]{$ $}(v16);
		\end{tikzpicture}
		\caption{$G$}
	\end{figure}

	Similarly, in every possible minimum distance path $u_iP_iw_i$, from $\mathcal{C}$ to $\mathcal{C}_i$ in $G$ the degree of each vertex in $P_i$ is exactly equal to $2$ in $G$, and there exists a composite cycle $\Gamma_i$ in $D$ containing all vertices of $P_i$. 
	
	Consider $G'=G\setminus (V(\mathcal{C})\cup V(P_1)\cup...\cup V(P_s))$ and let the corresponding signed directed graph be $D'$. Then each of the connected components of $G'$ has at least one cycle, no leaf, such that the distance between any two path-adjacent cycles is odd. Also, the number of cycles in each of the components is at most $t-1$. So, by the induction hypothesis, $D'$ has a composite cycle $\Gamma'$ of length equal to the order of $G'$. Let $\Gamma=\Gamma' \Gamma_1\Gamma_2 \cdots  \Gamma_s (u_1u_2\cdots u_k).$ Then $\Gamma$ is a composite cycle of length $n$ in $D$ containing $(u_1,u_2,...,u_k)$.
\end{proof}

%%%%%%%%%%%%%%%%
If the distance between two path-adjacent cycles in $G$ is even, then the conclusion of the above lemma does not hold good, as the following examples show. 
\begin{eg}
	Let $\mathcal{P}\in \mathcal{Q}_9$ be an irreducible combinatorially symmetric sign pattern with a $0$-diagonal, whose underlying undirected graph $G$ is given in Fig.\ref{xxfig1}.
	\begin{figure}[H]
		\tikzset{node distance=1cm}
		\centering
		\begin{tikzpicture}
			\tikzstyle{vertex}=[draw, circle, inner sep=0pt, minimum size=.15cm, fill=black]
			\tikzstyle{edge}=[,thick]
			\node[vertex,label=above:$u_2$, ](v1)at(-2,1){};
			\node[vertex,label=above:$u_1$](v8)at(0,1){};
			\node[vertex,label=below:$u_3$](v2)at(-2,0){};
			\node[vertex,label=below:$u_4$](v3)at(0,0){};
			\node[vertex,label=below:$u_5$](v4)at(1,0){};
			\node[vertex,label=below:$u_6$](v5)at(2,0){};
			\node[vertex,label=below:$u_7$](v6)at(4,0){};
			\node[vertex,label=above:$u_8$](v7)at(4,1){};
			\node[vertex,label=above:$u_9$](v9)at(2,1){};
			
			\draw[edge](v1)--node[]{$ $}(v2);
			\draw[edge](v1)--node[]{$ $}(v8);
			\draw[edge](v2)--node[]{$ $}(v3);
			\draw[edge](v3)--node[]{$ $}(v8);
			\draw[edge](v3)--node[]{$ $}(v4);
			\draw[edge](v4)--node[]{$ $}(v5);
			\draw[edge](v5)--node[]{$ $}(v6);
			\draw[edge](v9)--node[]{$ $}(v7);
			\draw[edge](v7)--node[]{$ $}(v6);
			\draw[edge](v5)--node[]{$ $}(v9);
		\end{tikzpicture}
		\caption{$G$}
		\label{xxfig1}
	\end{figure} Then $G$ contains cycles $\mathcal{C}_1=u_1u_2u_3u_4$, $\mathcal{C}_2=u_6u_7u_8u_9$ of length $4$. Let $D$ be the signed directed graph corresponding to $G$. Then the maximum length of the composite cycles in $D$ is $8$; however, the order of $D$ is $9$.      
\end{eg}

\begin{eg}
	Let $\mathcal{P}\in \mathcal{Q}_8$ be an irreducible combinatorially symmetric sign pattern with a $0$-diagonal, whose underlying undirected graph $G$ is given in Fig.\ref{xxfig1.11}.
	\begin{figure}[H]
		\tikzset{node distance=1cm}
		\centering
		\begin{tikzpicture}
			\tikzstyle{vertex}=[draw, circle, inner sep=0pt, minimum size=.15cm, fill=black]
			\tikzstyle{edge}=[,thick]
			\node[vertex,label=above:$u_2$, ](v1)at(-2,1){};
			\node[vertex,label=above:$u_1$](v8)at(0,1){};
			\node[vertex,label=below:$u_3$](v2)at(-2,0){};
			\node[vertex,label=below:$u_4$](v3)at(0,0){};
			\node[vertex,label=below:$u_5$](v4)at(1,0){};
			\node[vertex,label=below:$u_6$](v5)at(2,0){};
			\node[vertex,label=below:$u_7$](v6)at(4,0){};
			\node[vertex,label=above:$u_8$](v7)at(3,1){};
			
			\draw[edge](v1)--node[]{$ $}(v2);
			\draw[edge](v1)--node[]{$ $}(v8);
			\draw[edge](v2)--node[]{$ $}(v3);
			\draw[edge](v3)--node[]{$ $}(v8);
			\draw[edge](v3)--node[]{$ $}(v4);
			\draw[edge](v4)--node[]{$ $}(v5);
			\draw[edge](v5)--node[]{$ $}(v6);
			\draw[edge](v5)--node[]{$ $}(v7);
			\draw[edge](v7)--node[]{$ $}(v6);
		\end{tikzpicture}
		\caption{$G$}
		\label{xxfig1.11}
	\end{figure} Then $\mathcal{C}_1=u_1u_2u_3u_4$, $\mathcal{C}_2=u_6u_7u_8$ are two cycles in $G$. Let $D$ be the signed directed graph corresponding to $G$. Although $D$ has a composite cycle of length $8$, but $D$ does not have a composite cycle of length $8$ containing the cycle $(u_6u_7u_8)$.      
\end{eg}

\begin{theorem}\label{xxth3.22}
	Let $\mathcal{P}\in \mathcal{Q}_n$ be an irreducible combinatorially symmetric sign pattern with a $0$-diagonal, whose underlying signed undirected $G$ has at least one cycle, has no leaf, and let the distance between any two path-adjacent cycles in $G$ be odd. Then $\mathcal{P}$ does not require a unique inertia if any one of the following holds:
	\begin{itemize}
		\item[i.] $G$ has a cycle $\mathcal{C}$ with an odd number of negative edges.
		\item[ii.] All cycles in $G$ have even length and $G$ has a cycle $\mathcal{C}$ with all edges negatively signed.
		\item[iii.] All cycles in $G$ have even length and $G$ has a cycle $\mathcal{C}$ containing a maximal signed path of odd length.
	\end{itemize}
\end{theorem}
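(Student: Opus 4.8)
The plan is to mirror the proof of Theorem~\ref{xxth3.14}, replacing the role of Lemma~\ref{l28} by Lemma~\ref{nl111}. Under the present hypotheses (no leaf, odd distance between path-adjacent cycles), Lemma~\ref{nl111} guarantees that for the distinguished cycle $\mathcal{C}=u_1u_2\cdots u_k$ the directed graph $D$ has a composite cycle of length $n$ containing the directed cycle $\gamma=(u_1u_2\cdots u_k)$; in particular the maximal composite cycle length is $m=n$, and deleting $\gamma$ leaves a composite cycle $\Gamma_0$ of length $n-k$ on $D\setminus V(\mathcal{C})$. This $\Gamma_0$ plays exactly the role that the maximal composite cycle of $D\setminus V(\mathcal{C})$ played in Theorem~\ref{xxth3.14}, and the three cases then follow the three cases of that theorem.

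For part (i), since $\mathcal{C}$ has an odd number of negative edges, the two directed $k$-cycles $\gamma=(u_1u_2\cdots u_k)$ and $\gamma'=(u_ku_{k-1}\cdots u_1)$ satisfy $\sgn(\gamma)=-\sgn(\gamma')$. Hence $\Gamma_1=\gamma\,\Gamma_0$ and $\Gamma_2=\gamma'\,\Gamma_0$ are composite cycles of length $m=n$ with opposite signs, and Lemma~\ref{xnl1} finishes the argument. For parts (ii) and (iii) I would first observe that, because every cycle of $G$ is even and every connecting path has odd distance (hence an even number of interior vertices), the composite cycle $\Gamma_0$ on $D\setminus V(\mathcal{C})$ can be chosen to consist entirely of $2$-cycles: each even directed cycle $(w_1\cdots w_{2j})$ occurring in $\Gamma_0$ is replaced by the alternate-edge $2$-cycles $(w_1w_2)(w_3w_4)\cdots(w_{2j-1}w_{2j})$ of the same total length, and the interior vertices of each connecting path pair off into $2$-cycles. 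With this $2$-cycle $\Gamma_0$ in hand, the constructions of Theorem~\ref{xxth3.14}(ii) transcribe: in part (ii) I form $\Gamma_1'=\Gamma_\alpha\Gamma_0$ and $\Gamma_2'=\Gamma_\beta\Gamma_0$, where $\Gamma_\alpha=(u_1u_2)(u_3u_4)\cdots(u_{k-1}u_k)$ is the (all-negative) perfect matching of $\mathcal{C}$ and $\Gamma_\beta=(u_1\cdots u_k)$ is the full cycle, and build matrices $B_{\Gamma_1'}(\epsilon),B_{\Gamma_2'}(\epsilon)\in\mathcal{Q}(\mathcal{P})$ in which the entries along $\Gamma_0$ carry magnitudes $M^p$ dominating those on $\mathcal{C}$; the scale separation together with Lemma~\ref{lem1} forces the eigenvalues to split into a fixed (for both matrices) contribution from $\Gamma_0$ plus the $\mathcal{C}$-contribution, which is $(0,0,k)$ for $\Gamma_\alpha$ (purely imaginary, skew block form) but not for $\Gamma_\beta$ (whose $k$-th roots of $\pm1$ include eigenvalues of nonzero real part since $k\ge4$), giving $\In(B_{\Gamma_1'}(\epsilon))\ne\In(B_{\Gamma_2'}(\epsilon))$.

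For part (iii), using the maximal signed path of odd length inside the even cycle $\mathcal{C}$ exactly as in Theorem~\ref{xxth3}(iii), I obtain matchings $\mathcal{M}_1,\mathcal{M}_2$ of $\mathcal{C}$ with $k_1$ negative and $k_2$ positive edges satisfying $2k_1+2k_2=k+2$; writing $\Gamma_1,\Gamma_2$ for the corresponding composite cycles of negative, respectively positive, $2$-cycles and splitting $\Gamma_0=\bar\Gamma_1\bar\Gamma_2$ into its negative and positive $2$-cycles, the products $\Gamma_1'=\Gamma_1\bar\Gamma_1$ and $\Gamma_2'=\Gamma_2\bar\Gamma_2$ consist solely of negative, respectively positive, $2$-cycles and satisfy $l(\Gamma_1')+l(\Gamma_2')=(k+2)+(n-k)=m+2$, so Lemma~\ref{xnl2} applies. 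The main obstacle, and the only genuinely new ingredient beyond Theorem~\ref{xxth3.14}, is justifying the all-$2$-cycle choice of $\Gamma_0$: I must verify that deleting $V(\mathcal{C})$ still leaves a subgraph covered by $2$-cycles of total length $n-k$, which rests on the parity bookkeeping (even cycles, even number of interior path vertices) supplied by the hypotheses and on the fact that Lemma~\ref{nl111} already produces a length-$n$ composite cycle whose non-$\mathcal{C}$ part can be refined edge-by-edge into $2$-cycles. Care is also needed to confirm that each such replacement keeps the object a valid, vertex-disjoint composite cycle.
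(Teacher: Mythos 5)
Your proposal is correct, and its skeleton --- invoke Lemma~\ref{nl111} to get a spanning composite cycle through the distinguished cycle $\mathcal{C}$, so that the maximal composite cycle length is $m=n$, and then transcribe the three cases of Theorem~\ref{xxth3.14} --- is exactly the paper's. Parts (i) and (ii) match the paper essentially verbatim; the only cosmetic difference in (ii) is that the paper does not refine the complementary composite cycle $\Gamma$ into $2$-cycles, but keeps its even simple cycles $\beta_p$ intact and records their eigenvalue contribution as the $l(\beta_p)$-th complex roots of $\pm M^{p\,l(\beta_p)}$, which is the same scale-separation-plus-Lemma~\ref{lem1} argument you describe. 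The genuine divergence is in part (iii). You refine $\Gamma$ into $2$-cycles, split it by sign as $\bar\Gamma_1\bar\Gamma_2$, and feed $\Gamma_1\bar\Gamma_1$ and $\Gamma_2\bar\Gamma_2$ (of total length $n+2=m+2$) into Lemma~\ref{xnl2}; the paper instead keeps the \emph{full} $\Gamma$ in both products $\Gamma_1\Gamma$ and $\Gamma_2\Gamma$, builds the two matrices with $10^p$ on the matching edges of $\mathcal{C}$ and $11^p$ on the $\beta_p$, and proves the two one-sided bounds $i_0(B_{\Gamma_1'}(\epsilon))\ge 2k_1+i_0(B_{\Gamma}(\epsilon))$ and $n-i_0(B_{\Gamma_2'}(\epsilon))\ge 2k_2+\bigl(l-i_0(B_{\Gamma}(\epsilon))\bigr)$, so that the unknown common term $i_0(B_{\Gamma}(\epsilon))$ cancels when Theorem~\ref{lem2} is applied. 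Your route buys a clean reduction to an existing lemma at the price of the refinement step you rightly flag as the new ingredient: it does go through, since $\mathcal{P}$ is combinatorially symmetric (so every edge of $G$ carries a $2$-cycle of $D$) and under (ii)--(iii) every cycle of $G$ is even, so each even directed cycle in $\Gamma$ may be replaced by its alternate-edge perfect matching of vertex-disjoint $2$-cycles of the same length. The paper's route avoids any structural claim about $\Gamma$ and works for an arbitrary maximal composite cycle of $D\setminus V(\mathcal{C})$, at the price of redoing the $i_0$ count by hand. Both arguments are sound.
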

\begin{proof} If $G$ has exactly one cycle, then it follows from Theorem~\ref{xxth3} that $\mathcal{P}$ does not require a unique inertia. Suppose that $G$ has more than one cycle, and let $\mathcal{C} = u_1u_2\cdots u_k$ be a cycle in $G$. Let $D$ be the signed directed graph corresponding to $G$. 
	Consider a composite cycle $\Gamma = \beta_1 \beta_2 \cdots \beta_t$ of maximum length 
	in the subgraph $D \setminus V(\mathcal{C})$, where each $\beta_r$ is a simple cycle in $D$ 
	for $1 \leq r \leq t$, and let $l(\Gamma) = l$. Then, by Lemma~\ref{nl111}, we have $n = k + l$.  
	\begin{itemize}
		\item[i.] If $\mathcal{C}$ has an odd number of negative edges, then 
		$\gamma_1=(u_1u_2\cdots u_k)$ and $\gamma_2=(u_ku_{k-1}\cdots u_1)$ are two cycles in $D$ of length $k$ with $\sgn(\gamma_1)=-\sgn(\gamma_2)$. 
		Therefore, 
		$$
		\Gamma_1 = \gamma_1\Gamma \quad\text{and}\quad \Gamma_2 = \gamma_2\Gamma
		$$
		are two composite cycles in $D$ of length $n$ satisfying $\sgn(\Gamma_1)=-\sgn(\Gamma_2)$. 
		By Lemma~\ref{xnl1}, $\mathcal{P}$ does not require a unique inertia.

		\item[ii.] Similarly as in the proof of Theorem~\ref{xxth3}(ii), $D$ has two composite cycles
		$\Gamma_1,\Gamma_2$ of length $k$ and two real matrix $B_{\Gamma_1}(\epsilon),B_{\Gamma_2}(\epsilon) \in \mathcal{Q}(\mathcal{P}[\{u_1,u_2,...,u_k\}])$ with $\In(B_{\Gamma_1}(\epsilon))\neq \In(B_{\Gamma_2}(\epsilon))$. Also, from the proof of Theorem~\ref{xxth3.14}(ii), the eigenvalues of both $B_{\Gamma_1}(\epsilon),B_{\Gamma_2}(\epsilon)$ are distinct.  Let,
		$$M>\max\{1,|\lambda|: \lambda~ \text{is an eigenvalue of either} ~B_{\Gamma_1}(\epsilon)~\text{or}~B_{\Gamma_2}(\epsilon)\}.$$ 
		Consider $$\Gamma_1'=\Gamma_1\Gamma\quad\text{and}\quad\Gamma_2'=\Gamma_2\Gamma$$
		and define the real matrices $B_{\Gamma_r'}(\epsilon)=[b_{\Gamma_r'}(\epsilon)_{ij}]\in \mathcal{Q}(\mathcal{P})$ for $r=1,2$, as
		$$|b_{\Gamma_r'}(\epsilon)_{ij}|= \begin{cases}
			|b_{\Gamma_r}(\epsilon)_{ij}| & \text{if}\ p_{ij}~ \text{is in } \Gamma_r \\
			M^p & \text{if}\ p_{ij}~ \text{is in}~ \beta_p,~\text{for}~p=1,2,...,t \\
			\epsilon & \text{if}~ p_{ij}\neq 0~\text{and not in}~\Gamma_r, \Gamma\\
			0 & \text{elsewhere.}
		\end{cases}$$
		For $\epsilon>0$ sufficiently small, the eigenvalues of $B_{\Gamma_r'}(\epsilon)$ remain close to the eigenvalues of $B_{\Gamma_r}(\epsilon)$  for $r=1,2$, together with the $l(\beta_p)$-th complex roots of $M^{pl(\beta_p)}$, $-M^{pl(\beta_p)}$, depending on the sign of $\beta_p$, for $p=1,3,...,t$. Since $\In(B_{\Gamma_1}(\epsilon))\neq \In(B_{\Gamma_2}(\epsilon))$ and $D$ contains only cycles of even length, by Lemma~\ref{lem1}, $\In(B_{\Gamma_1'}(\epsilon))\neq \In(B_{\Gamma_2'}(\epsilon))$. Therefore, $\mathcal{P}$ does not require a unique inertia.

		\item[iii.] If $l(\mathcal{C})$ is even and $\mathcal{C}$ contains a maximal signed path of odd length, it follows similarly to the proof of Theorem~\ref{xxth3}(iii) that there exist two matchings $\mathcal{M}_1=\{\alpha_{i_1},\alpha_{i_2},..., \alpha_{i_{k_1}}\}$ and $\mathcal{M}_2=\{\alpha_{j_1},\alpha_{j_2},..., \alpha_{j_{k_2}}\}$ such that $\mathcal{M}_1$ contains only negative edges and $\mathcal{M}_2$ contains only positive edges, satisfying the relation $2k_1 + 2k_2 = k + 2$, where $\alpha_t$ for $t=1,2,...,k$ are the edges of $\mathcal{C}$. Let $\Gamma_1$, $\Gamma_2$ denote the composite cycles in $D$ corresponding to $\mathcal{M}_1$, $\mathcal{M}_2$, respectively. Consider
		$$\Gamma_1'=\Gamma_1\Gamma\quad\text{and}\quad\Gamma_2'=\Gamma_2\Gamma.$$ Define real matrices $B_{\Gamma_1'}(\epsilon)=[b_{\Gamma_1'}(\epsilon)_{ij}]\in \mathcal{Q}(\mathcal{P})$ as
		$$|b_{\Gamma_1'}(\epsilon)_{ij}|= \begin{cases}
			10^p & \text{if}\ p_{ij}~ \text{is}~ \text{in}~ \alpha_p,~\text{for}~p=i_1,i_2,...,i_{k_1} \\
			11^p & \text{if}\ p_{ij}~ \text{is}~ \text{in}~ \beta_p,~\text{for}~p=1,2,...,t \\
			\epsilon & \text{if}~ p_{ij}\neq 0~\text{and not in both}~ \Gamma_1,~ \Gamma\\
			0 & \text{elsewhere.}
		\end{cases}$$
		
		Since $D$ contains only cycles of even length, the characteristic polynomial $\Ch_{B_{\Gamma_r'}(\epsilon)}(x)$ consists of only even powers of $x$. Hence, by Lemma~\ref{lem1}, for sufficiently small $\epsilon > 0$, we have $$i_0(B_{\Gamma_1'}(\epsilon))\geq 2k_1+i_0(B_{\Gamma}(\epsilon)).$$

		Similarly, define the real matrix $B_{\Gamma_2'} (\epsilon)\in \mathcal{Q}(\mathcal{P})$, for sufficiently small $\epsilon > 0$, we have $$n-i_0(B_{\Gamma_2'}(\epsilon))\geq 2k_2+(l-i_0(B_{\Gamma}(\epsilon))),$$
		where $B_{\Gamma}(\epsilon)\in \mathcal{Q}(\mathcal{P}[\{u_1,u_2,...,u_k\}^c])$ is a submatrix of both $B_{\Gamma_1'}(\epsilon)$, $B_{\Gamma_2'}(\epsilon)$.
		Therefore,
		$$i_0(B_{\Gamma_1'}(\epsilon))+(n-i_0(B_{\Gamma_2'}(\epsilon)))\geq (2k_1+i_0(B_{\Gamma}(\epsilon)))+(2k_2+(l-i_0(B_{\Gamma}(\epsilon))))= n+2.$$ Hence, by Theorem \ref{lem2}, $\mathcal{P}$ does not require a unique inertia.  \end{itemize}
\end{proof}

The following are examples of sign patterns that satisfy conditions (i), (ii), and (iii) of the previous theorem, respectively, and therefore do not require a unique inertia.

\begin{eg}
	Let $\mathcal{P}\in \mathcal{Q}_8$ be an irreducible combinatorially symmetric sign pattern with a $0$-diagonal, whose underlying undirected graph $G$ is  given in Fig.\ref{xnfig5.1},
	\[
	\mathcal{P} =
	\begin{bmatrix}
		0 & + & + & 0 & 0 & 0 & 0 & 0 \\
		- & 0 & + & 0 & 0 & 0 & 0 & 0 \\
		+ & + & 0 & + & 0 & 0 & 0 & 0 \\
		0 & 0 & + & 0 & + & 0 & 0 & 0 \\
		0 & 0 & 0 & - & 0 & + & 0 & 0 \\
		0 & 0 & 0 & 0 & + & 0 & + & + \\
		0 & 0 & 0 & 0 & 0 & + & 0 & + \\
		0 & 0 & 0 & 0 & 0 & + & + & 0
	\end{bmatrix}
	\]
	
	\begin{figure}[H]
		\centering
		\tikzset{node distance=1cm}
		\begin{tikzpicture}
			\tikzstyle{vertex}=[draw, circle, inner sep=0pt, minimum size=0.15cm, fill=black]
			\tikzstyle{edge}=[thick]
			\node[vertex, label=above:$u_1$](v1) at (-2.5,1.25) {};
			\node[vertex, label=below:$u_2$](v2) at (-3.5,0) {};
			\node[vertex, label=below:$u_3$](v3) at (-1.5,0) {};
			\node[vertex, label=below:$u_4$](v4) at (0,0) {};
			\node[vertex, label=below:$u_5$](v5) at (1.5,0) {};
			\node[vertex, label=below:$u_6$](v6) at (3,0) {};
			\node[vertex, label=below:$u_7$](v7) at (5,0) {};
			\node[vertex, label=above:$u_8$](v8) at (4,1.25) {};
			
			\draw[edge] (v1)--node[left]{$-$} (v2);
			\draw[edge] (v2)--node[below]{$+$} (v3);
			\draw[edge] (v3)--node[right]{$+$} (v1);
			\draw[edge] (v3)--node[above]{$+$} (v4);
			\draw[edge] (v4)--node[above]{$-$} (v5);
			\draw[edge] (v5)--node[above]{$+$} (v6);
			\draw[edge] (v7)--node[below]{$+$} (v6);
			\draw[edge] (v7)--node[right]{$+$} (v8);
			\draw[edge] (v8)--node[left]{$+$} (v6);
		\end{tikzpicture}
		\caption{Signed undirected graph $G$ of $\mathcal{P}$.}
		\label{xnfig5.1}
	\end{figure}
	Then $G$ has a cycle $\mathcal{C}=u_1u_2u_3$ with an odd number of negative edges, and the distance from the cycle $\mathcal{C}$ to $\mathcal{C}_1=u_6u_7u_8$ is odd. Thus, $\mathcal{P}$ satisfies the conditions stated in (i) of Theorem~\ref{xxth3.22}, which implies that $\mathcal{P}$ does not require a unique inertia.
	%Consider two composite cycles in the underlying signed directed graph $D$ of $\mathcal{P}$ as $\Gamma_1=(u_1u_2u_3)(u_4u_5)(u_6u_7u_8)$ and $\Gamma_2=(u_1u_3u_2)(u_4u_5)(u_6u_7u_8)$. Then both $\Gamma_1, \Gamma_2$ have length $8$, and $\sgn(\Gamma_1)=-\sgn(\Gamma_2)$. Define matrices $B_{\Gamma_t}=[b_{ij}^t]$, similarly as in \ref{xe3}, for $t=1,2$. Then $B_{\Gamma_1}, B_{\Gamma_2}\in \mathcal{Q}(\mathcal{P})$ with $\In(B_{\Gamma_1})=(4,4,0)$ and $\In(B_{\Gamma_2})=(5,3,0)$. 
\end{eg}

\begin{eg}
	Let $\mathcal{P}\in \mathcal{Q}_8$ be an irreducible combinatorially symmetric sign pattern with a $0$-diagonal, whose underlying undirected graph $G$ is  given in Fig.\ref{xnfig6.1},
	\[
	\mathcal{P} =
	\begin{bmatrix}
		0 & + & 0 & - & 0 & 0 & 0 & 0 \\
		- & 0 & + & 0 & 0 & 0 & 0 & 0 \\
		0 & - & 0 & + & 0 & 0 & 0 & 0 \\
		+ & 0 & - & 0 & + & 0 & 0 & 0 \\
		0 & 0 & 0 & + & 0 & + & 0 & + \\
		0 & 0 & 0 & 0 & + & 0 & + & 0 \\
		0 & 0 & 0 & 0 & 0 & + & 0 & + \\
		0 & 0 & 0 & 0 & + & 0 & + & 0
	\end{bmatrix}
	\]
	
	\begin{figure}[H]
		\centering
		\tikzset{node distance=1cm}
		\begin{tikzpicture}
			\tikzstyle{vertex}=[draw, circle, inner sep=0pt, minimum size=0.15cm, fill=black]
			\tikzstyle{edge}=[thick]
			\node[vertex, label=above:$u_1$](v1) at (0,1.25) {};
			\node[vertex, label=above:$u_2$](v2) at (-2,1.25) {};
			\node[vertex, label=below:$u_3$](v3) at (-2,0) {};
			\node[vertex, label=below:$u_4$](v4) at (0,0) {};
			\node[vertex, label=below:$u_5$](v5) at (1.5,0) {};
			\node[vertex, label=below:$u_6$](v6) at (3.5,0) {};
			\node[vertex, label=above:$u_7$](v7) at (3.5,1.25) {};
			\node[vertex, label=above:$u_8$](v8) at (1.5,1.25) {};
			
			\draw[edge] (v1)--node[above]{$-$} (v2);
			\draw[edge] (v2)--node[left]{$-$} (v3);
			\draw[edge] (v3)--node[below]{$-$} (v4);
			\draw[edge] (v4)--node[right]{$-$} (v1);
			\draw[edge] (v4)--node[below]{$+$} (v5);
			\draw[edge] (v5)--node[below]{$+$} (v6);
			\draw[edge] (v7)--node[right]{$+$} (v6);
			\draw[edge] (v7)--node[above]{$+$} (v8);
			\draw[edge] (v8)--node[left]{$+$} (v5);
		\end{tikzpicture}
		\caption{Signed undirected graph $G$ of $\mathcal{P}$.}
		\label{xnfig6.1}
	\end{figure}
	Then $G$ has a cycle $\mathcal{C}=u_1u_2u_3u_4$ with all edges are negatively signed, and the distance from the cycle $\mathcal{C}$ to $\mathcal{C}_1=u_5u_6u_7u_8$ is odd. Thus, $\mathcal{P}$ satisfies the conditions stated in (ii) of Theorem~\ref{xxth3.22}, which implies that $\mathcal{P}$ does not require a unique inertia.
\end{eg}

\begin{eg}
	Let $\mathcal{P}\in \mathcal{Q}_8$ be an irreducible combinatorially symmetric sign pattern with a $0$-diagonal, whose underlying undirected graph $G$ is  given in Fig.\ref{xnfig6.2},
	\[
	\mathcal{P} =
	\begin{bmatrix}
		0 & + & 0 & + & 0 & 0 & 0 & 0 \\
		- & 0 & + & 0 & 0 & 0 & 0 & 0 \\
		0 & + & 0 & + & 0 & 0 & 0 & 0 \\
		+ & 0 & + & 0 & + & 0 & 0 & 0 \\
		0 & 0 & 0 & + & 0 & + & 0 & + \\
		0 & 0 & 0 & 0 & + & 0 & + & 0 \\
		0 & 0 & 0 & 0 & 0 & + & 0 & + \\
		0 & 0 & 0 & 0 & + & 0 & + & 0
	\end{bmatrix}
	\]
	
	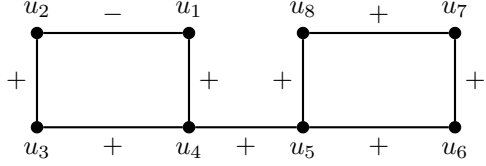
\begin{figure}[H]
		\centering
		\tikzset{node distance=1cm}
		\begin{tikzpicture}
			\tikzstyle{vertex}=[draw, circle, inner sep=0pt, minimum size=0.15cm, fill=black]
			\tikzstyle{edge}=[thick]
			\node[vertex, label=above:$u_1$](v1) at (0,1.25) {};
			\node[vertex, label=above:$u_2$](v2) at (-2,1.25) {};
			\node[vertex, label=below:$u_3$](v3) at (-2,0) {};
			\node[vertex, label=below:$u_4$](v4) at (0,0) {};
			\node[vertex, label=below:$u_5$](v5) at (1.5,0) {};
			\node[vertex, label=below:$u_6$](v6) at (3.5,0) {};
			\node[vertex, label=above:$u_7$](v7) at (3.5,1.25) {};
			\node[vertex, label=above:$u_8$](v8) at (1.5,1.25) {};
			
			\draw[edge] (v1)--node[above]{$-$} (v2);
			\draw[edge] (v2)--node[left]{$+$} (v3);
			\draw[edge] (v3)--node[below]{$+$} (v4);
			\draw[edge] (v4)--node[right]{$+$} (v1);
			\draw[edge] (v4)--node[below]{$+$} (v5);
			\draw[edge] (v5)--node[below]{$+$} (v6);
			\draw[edge] (v7)--node[right]{$+$} (v6);
			\draw[edge] (v7)--node[above]{$+$} (v8);
			\draw[edge] (v8)--node[left]{$+$} (v5);
		\end{tikzpicture}
		\caption{Signed undirected graph $G$ of $\mathcal{P}$.}
		\label{xnfig6.2}
	\end{figure}
	Then $G$ has a cycle $\mathcal{C}=u_1u_2u_3u_4$ with a maximal signed path of odd length, and the distance from the cycle $\mathcal{C}$ to $\mathcal{C}_1=u_5u_6u_7u_8$ is odd. Thus, $\mathcal{P}$ satisfies the conditions stated in (iii) of Theorem~\ref{xxth3.22}, which implies that $\mathcal{P}$ does not require a unique inertia.
	%Consider two composite cycles in the underlying signed directed graph $D$ of $\mathcal{P}$ as $\Gamma_1=(u_1u_2)(u_5u_6u_7u_8)$ and $\Gamma_2=(u_2u_3)(u_4u_1)(u_5u_6u_7u_8)$. Define matrices $B_{\Gamma_t}=[b_{ij}^t]$ as in \ref{xe3}, for $t=1,2$. Then $B_{\Gamma_1}, B_{\Gamma_2}\in \mathcal{Q}(\mathcal{P})$ with $\In(B_{\Gamma_1})=(2,2,4)$ and $\In(B_{\Gamma_2})=(3,3,2)$. Therefore, $\mathcal{P}$ does not require a unique inertia.  
\end{eg}

If neither of the three conditions of Theorem~\ref{xxth3.22} are satisfied by a sign pattern, from Examples \ref{eg0.6}, \ref{egxx0.6}, we can similarly construct examples of sign patterns which require a unique inertia as well as those which do not require a unique inertia.

\section{Concluding remarks}\label{s5}

Some of the problems that leave the scope for future research are related to investigating the relationship between consistency and requiring a unique inertia for tridiagonal sign patterns with a $0$-diagonal, i.e., whether such patterns require a unique inertia if and only if they are consistent.

We believe that if $\mathcal{P}\in \mathcal{Q}_n$ is an irreducible tridiagonal sign pattern with a $0$-diagonal such that the underlying signed undirected graph has a maximal signed path of odd length that does not include any leaf of $G$, then by arguments similar to those in Theorems~\ref{nth3.4} and \ref{nth3.18}, it can be shown that $\mathcal{P}$ does not require a unique inertia.
Also, if the only maximal signed odd length path always includes a leaf for such patterns, or if it does not contain any maximal signed odd length path, then whether it implies that the pattern requires a unique inertia needs to be investigated.

Another open question is, if a path sign pattern a with $0$-diagonal allow repeated eigenvalues with zero real part, then can we say that it does not require a unique inertia? We have only partially answered this question, and we could show that if it allows a repeated zero eigenvalue, then it does not require a unique inertia.
Also, we have noticed that although in this paper we focus only on combinatorially symmetric sign patterns with a $0$-diagonal, some of the results can be clearly extended to their super patterns by following almost the same proof. So identifying the super patterns of those studied in this paper, requiring a unique inertia, will be an interesting topic for future research.

\section{Acknowledgement} The research work of Partha Rana was supported by the Council of Scientific and Industrial Research (CSIR), India (File Number 09/731(0186)/2021-EMR-I).

\end{document}